\definecolor{darkblue}{rgb}{0.0, 0.0, 0.8}
\newtheorem{theorem}{Theorem}[section]
\newtheorem{lemma}[theorem]{Lemma}
\newtheorem{assumption}{Assumption}
\crefname{assumption}{Assumption}{Assumptions}
\newtheorem{corollary}[theorem]{Corollary}
\newtheorem{definition}[theorem]{Definition}
\newtheorem{remark}[theorem]{Remark}
\newtheorem{example}[theorem]{Example}
\newtheorem*{notation*}{Notation}
\newtheorem*{theorem*}{Theorem}
\newtheorem*{conjecture*}{Conjecture}
\numberwithin{figure}{section}
\numberwithin{table}{section}
\newcommand{\bN}{\mathbb N}
\newcommand{\bR}{\mathbb R}
\newcommand{\Diff}{\mathrm{Diff}}
\newcommand{\dist}{\operatorname{dist}}
\newcommand{\g}{\ol{g}}
\newcommand{\Imm}{\mathrm{Imm}}
\newcommand{\ol}{\overline}
\newcommand{\Tr}{\mathrm{Tr}}
\newcommand{\vol}{\mathrm{vol}}
\newcommand{\Vol}{\mathrm{Vol}}
\newcommand{\End}{\operatorname{End}}
\newcommand{\Shape}{\mathcal S}
\newcommand{\Group}{\mathcal G}
\newcommand{\pl}{\partial}
\newcommand{\sym}{\operatorname{sym}}
\renewcommand{\H}{\mathcal{H}}
\newcommand{\Christ}[1]{\Gamma_{#1}}
\renewcommand{\Vec}{\sharp}
\newcommand{\Covec}{\flat}
\newcommand{\ChristVec}{\Christ\Vec}
\newcommand{\ChristCovec}{\Christ\Covec}
\def\on{\operatorname}
\newcommand{\inner}[2]{\langle #1,#2 \rangle_{\bR^d}}
\begin{document}

\title[Completeness of invariant  metrics on the space of surfaces]{Completeness of reparametrization-invariant Sobolev metrics on the space of surfaces}
\author{Martin Bauer}
\address{Department of Mathematics, Florida State University}
\email{mbauer2@fsu.edu}

\author{Cy Maor}
\address{Einstein Institute of Mathematics, Hebrew University}
\email{cy.maor@mail.huji.ac.il}

\author{Benedikt Wirth}
\address{Applied Mathematics, University of Münster}
\email{benedikt.wirth@uni-muenster.de}

\begin{abstract}
We study reparametrization-invariant Sobolev-type Riemannian metrics on the space of immersed surfaces and establish conditions ensuring metric and geodesic completeness as well as the existence of minimizing geodesics.
This provides the first extension of completeness results for immersed curves, originating from works of Bruveris, Michor, and Mumford, and validates an earlier conjecture of Mumford on completeness properties of general spaces of immersions in this important case.

The result is obtained by recasting earlier approaches to completeness on manifolds of mappings as a general completeness criterion for infinite-dimensional Riemannian manifolds that are open subsets of a complete Riemannian manifold, and 
by combining it with geometric estimates based on the Michael--Simon--Sobolev inequality to establish the completeness for specific Sobolev metrics on immersed surfaces.
We expect that this approach will be useful for obtaining completeness results for other manifolds of mappings.
\end{abstract}

\maketitle
\setcounter{tocdepth}{1}
\tableofcontents

\section{Introduction}
Starting with the influential work of Younes~\cite{younes1998computable}, reparametrization-invariant Riemannian metrics on spaces of immersions have become a topic of significant research interest, see, e.g.,~\cite{younes2010shapes,srivastava2016functional,schmeding2022introduction,michor2007overview,bauer2014overview} and the references therein. The study of these geometries can be motivated from several perspectives: First, they  can be viewed as the natural generalization of right-invariant metrics on groups of diffeomorphisms, which build the foundation of Arnold's geometric viewpoint for many prominent equations in hydrodynamics~\cite{arnold2009topological,constantin2003geodesic,marsden2013introduction}. As such they provide a canonical class of examples of infinite-dimensional Riemannian manifolds that might allow us to gain a deeper understanding of the phenomena and difficulties arising in the context of infinite-dimensional geometry, including completeness properties and the failure of the theorem of Hopf--Rinow~\cite{atkin1975hopf,ekeland1978hopf} or vanishing geodesic distance and locally unbounded curvature~\cite{michor2005vanishing,eliashberg1993bi}.
Equally significant is their role in geometric data science and mathematical shape analysis, where one seeks mathematically rigorous methods for comparing geometric objects --- such as immersed curves, surfaces, or, more generally, submanifolds --- in a mathematically precise manner. Reparametrization-invariant metrics on spaces of immersions descend to Riemannian metrics on the shape space of (unparametrized) geometric objects and consequently the induced geodesic distance provides a natural basis for defining a statistical framework on this infinite-dimensional space~\cite{younes2010shapes,srivastava2016functional,bauer2014overview}.

In this article, we are interested in completeness properties of reparametrization-invariant Riemannian metrics on the space of immersed surfaces.
In addition to their theoretical importance, completeness and existence of minimizing geodesics are of particular relevance in the aforementioned applications in shape analysis, as they guarantee the well-posedness of the resulting shape matching algorithms, which serve as the basis for any subsequent geometric statistics analysis. To the best of our knowledge, completeness properties for Riemannian metrics on spaces of immersions were first discussed by David Mumford around 2013: In his presentation at the annual meeting of the NSF-funded Focused Research Group  ``The geometry, mechanics and statistics of the infinite-dimensional manifold of shapes'' 
he conjectured geodesic completeness of the space of immersions equipped with  Sobolev metrics of sufficiently high order; this would imply that these geometries exhibit a completeness behavior analogous to that of right-invariant Sobolev metrics on diffeomorphism groups, for which completeness of strong enough metrics is known --- a line of research that began with the seminal work of Ebin and Marsden~\cite{ebin1970groups} and has seen important developments in recent years~\cite{misiolek2010fredholm,bruveris2017completeness,escher2014geodesic,bauer2015local,bauer2025regularity}.

In 2014, Bruveris, Michor, and Mumford~\cite{bruveris2014geodesic} confirmed this conjecture for the space of plane curves equipped with a Sobolev metric of order two or higher. Since then, their result has been extended in several directions, encompassing curves in~$\mathbb{R}^d$~\cite{bruveris2015completeness} as well as addressing questions of metric completeness and the existence of minimizing geodesics~\cite{bruveris2015completeness,nardi2016geodesics}.
Further developments have covered weighted metrics \cite{bruveris2017completenessB}, Sobolev metrics of non-integer order~\cite{bauer2024completeness} and curves with values in general manifolds~\cite{bauer2023sobolev}.
Notably, while the first completeness paper~\cite{bruveris2014geodesic} reiterates the conjecture that higher order metrics on surfaces are expected to be complete, all of these advances pertain exclusively to immersed curves. 
Prior to the present work, the corresponding completeness problem for the arguably most significant case --- the space of immersed surfaces --- remained entirely unresolved. The main result of this article addresses this open question
and gives the first construction of complete, invariant metrics on the space of surfaces. 
\subsection{Contributions of the article}
In the following we introduce the main concepts and results of the article. For complete definitions we refer the reader to \cref{sec:spaces}.

In all of this article, let $M$ be a two-dimensional, oriented, closed manifold.  We consider the space $\Imm^l(M,\bR^d)$ of all immersions of  $M$ into $\bR^d$ with $d\geq 3$, of integer Sobolev regularity $H^l$.
Here we assume $l\geq 3$ so that the Sobolev space $H^l(M,\bR^d)$ embeds into $C^1(M,\bR^d)$, hence the immersion condition is well-defined. 
We equip this infinite-dimensional manifold with a curvature-weighted Sobolev metric of integer order $3\leq k\leq l$ of the form
\begin{align}\label{eq:curvatureweightedmetricIntro}
G^k_f(h_1,h_2)
= \int_{M} h_1\cdot h_2+|{\H}|^4&\left(g(\nabla h_1,\nabla h_2)+g(\nabla^2 h_1,\nabla^2 h_2)\right)
+ g(\nabla^k h_1,\nabla^k h_2)\,\vol.
\end{align}
Here $f\in \Imm^l(M,\bR^d)$, $h_1, h_2\in T_f\Imm^l(M,\bR^d)\cong H^l(M,\bR^d)$ and  $g, \H, \nabla$ and $\vol$ denote the surface metric, mean curvature, covariant derivative and volume density induced by $f$. 
We also introduce the shape space of unparametrized surfaces $\Shape^l(M,\bR^d):=\Imm^l(M,\bR^d)/\Diff^l(M)$, where
$\Diff^l(M)$ denotes the group of orientation-preserving diffeomorphisms of Sobolev class $H^l$. Since the Riemannian metric $G^k$ is invariant under the action of $\Diff^l(M)$ it induces a corresponding geometry on the shape space.
The main result of this article is as follows:
\begin{theorem}[Completeness Properties of the Space of Immersed Surfaces]\label{theorem:main}
Let $M$ be a two-dimensional, closed manifold, let $d,l\geq 3$ and let $\Imm^l(M,\bR^d)$  be the space of all immersions of $M$ into $\bR^d$ of regularity $H^l$, equipped with the Riemannian metric $G^k$ from~\eqref{eq:curvatureweightedmetricIntro} of order $3\leq k\leq l$. We have:
\begin{enumerate}[label=(\alph*)]
\item {\bf Metric  Completeness of $\Imm^k(M,\bR^d)$:} The space $(\Imm^k(M,\bR^d), G^k)$ is metrically complete.\label{item:metriccomplete}
\item {\bf Geodesic Completeness of $\Imm^l(M,\bR^d)$ for $l\geq k$:} The space $(\Imm^l(M,\bR^d), G^k)$ is geodesically complete. The result continues to hold for $l=\infty$, i.e., the space of smooth immersions $\Imm(M,\bR^d)$ equipped with the metric $G^k$ is geodesically complete.\label{item:geodesiccomplete}
\item {\bf Existence of Minimizing Geodesics on $\Imm^k(M,\bR^d)$:} For any two immersed surfaces $f_0,f_1$ in the same connected component of $\Imm^k(M,\bR^d)$ there exists a minimizing $G^k$-geodesic in $\Imm^k(M,\bR^d)$ connecting them.
\label{item:geodesicconvex}
\item {\bf Metric  Completeness of $\Shape^k(M, \mathbb{R}^d)$:} The space $\Shape^k(M, \mathbb{R}^d)$ is metrically complete with respect to the metric induced by $G^k$.
 \item {\bf Existence of Optimal Reparametrizations:} For any $[f_1], [f_2] \in \Shape^k(M, \mathbb{R}^d)$ in the same connected component
there exists an optimal reparametrization $\bar \varphi\in \Diff^k(M)$
attaining the infimum in the definition of the quotient distance,
\begin{equation}
\dist([f_1], [f_2])
  = \inf_{\varphi \in \Diff^k(M)} 
    \dist_{G^k}(f_1, f_2 \circ \varphi)=
    \dist_{G^k}(f_1, f_2 \circ \bar \varphi).
\end{equation}
\item {\bf Shape Space is a Geodesic Length Space:} For any two elements $[f_1], [f_2] \in \Shape^k(M, \mathbb{R}^d)$ in the same connected component
there exists a minimizing connecting geodesic in $\Shape^k(M, \mathbb{R}^d)$, in the sense of metric geometry, for the metric induced by $G^k$.
\end{enumerate}
\end{theorem}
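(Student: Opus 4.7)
The plan is to deduce all six statements from a single master a priori estimate on finite-$G^k$-length paths, broadly following the strategy of Bruveris--Michor--Mumford for curves but with surface-specific tools. The starting point is that $\Imm^k(M,\bR^d)$ sits as an open subset of the Hilbert space $H^k(M,\bR^d)$, since the immersion property is $C^1$-open and $H^k\hookrightarrow C^1$ for $k\geq 3$. I would then isolate an abstract completeness criterion for open subsets $N\subset\tilde N$ of a complete ambient Riemannian manifold, to the effect that metric completeness of $N$, geodesic completeness, and existence of minimizers all follow once finite-length paths in $N$ stay a positive distance from $\tilde N\setminus N$ and their $\tilde N$-norm remains bounded. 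With this criterion in place, items (a)--(c) are reduced to a purely analytic estimate about the behavior of immersed surfaces along finite-$G^k$-length paths.

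The heart of the proof is to verify these hypotheses concretely for $G^k$. Expanding the $G^k$-norm, the $L^2$-term together with the top-order $\|\nabla^k\dot f\|_{L^2}^2$-term control the full $H^k$-norm of the velocity \emph{provided} the first fundamental form $g(t)=df(t)^{\top}df(t)$ has uniformly bounded conformal factor. To break the circularity this would entail, the curvature weight $|\H|^4$ in the second-order term plays a decisive role in combination with the Michael--Simon--Sobolev inequality: MSS yields an intrinsic Sobolev embedding on $(M,g(t))$ whose constant depends only on $\|\H\|_{L^2}$ rather than on the induced volume element, so integral curvature bounds translate into intrinsic Sobolev control that is uniform along the path. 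The $|\H|^4$-weighted term then functions as a barrier: if the mean curvature or the conformal factor of $g$ tried to blow up along a finite-length path, the integrand would force the length to diverge. Establishing this chain of estimates is the main technical obstacle and the point where surfaces differ most sharply from curves, since the one-dimensional arguments relied on the direct embedding $H^1\hookrightarrow L^\infty$ to control the arc-length element pointwise.

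With the master estimate in hand, (a) follows because a $G^k$-Cauchy sequence becomes $H^k$-Cauchy with a limit still in $\Imm^k$. For (b), I would apply the same estimate to a maximal geodesic, using conservation of the kinetic energy $G^k_{f(t)}(\dot f,\dot f)$ to preclude finite-time blow-up; the extension to $l>k$ requires a persistence-of-regularity (``no loss, no gain'') argument, which I would prove by differentiating the geodesic equation and closing a Gronwall-type estimate on higher Sobolev norms of $\dot f$. Item (c) is handled by the direct method: take a minimizing sequence of paths in $H^1([0,1];H^k(M,\bR^d))$, extract a weakly convergent subsequence, use the key estimate to ensure the limit path stays inside $\Imm^k$, and pass the energy to the limit by weak lower semicontinuity.

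For the shape-space results (d)--(f), I would exploit the isometric $\Diff^k(M)$-invariance of $G^k$. A Cauchy sequence $([f_n])$ in $\Shape^k(M,\bR^d)$ can be lifted to an almost-Cauchy sequence of representatives by inductively choosing reparametrizations that nearly realize the quotient distance between consecutive shapes, so (d) reduces to (a) applied to this lift. For (e), given $[f_1],[f_2]$ and a minimizing sequence $\varphi_n\in\Diff^k(M)$, the uniform $G^k$-distance bound on $f_2\circ\varphi_n$ translates via the master estimate into $H^k$-precompactness of $\{f_2\circ\varphi_n\}$, from which a limiting optimal diffeomorphism $\bar\varphi$ is extracted using properness of the $\Diff^k(M)$-action. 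Finally (f) combines (c) and (e), since a minimizing geodesic in $\Imm^k(M,\bR^d)$ from $f_1$ to $f_2\circ\bar\varphi$ descends to a minimizing length-space geodesic in $\Shape^k(M,\bR^d)$.
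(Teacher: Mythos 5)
Your high-level strategy is essentially the paper's: reduce everything to an abstract completeness criterion for an open subset $\mathcal{M}$ of a complete ambient Hilbert manifold (the paper's \cref{thm:abstract_completeness}), then verify its hypotheses for $(\Imm^k(M,\bR^d),G^k)\subset H^k(M,\bR^d)$ using Gronwall-type a priori bounds on the surface metric, second fundamental form and Christoffel symbols along finite-energy paths, powered by the Michael--Simon--Sobolev inequality to make the intrinsic Sobolev embeddings uniform in the immersion. The role you assign to the $|\H|^4$ weight --- a barrier that couples metric control to curvature control and prevents $g$ from degenerating --- is exactly the paper's mechanism (Section~\ref{sec:bounds} and \cref{lemma:assumptions}). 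Your treatment of (a), (c) by the direct method with weak lower semicontinuity, of (b) for $l>k$ via no-loss-no-gain, and of (d), (f) is also in line with the paper. One small imprecision: the MSS constant does not ``depend only on $\|\H\|_{L^2}$''; it is a universal constant with $\H$ entering as a weight $\||\H|h\|_{L^1(g)}$ on the right-hand side, and the control on $\H$ is what you bootstrap from the weighted terms of $G^k$.

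There is, however, a genuine gap in your argument for item (e). You extract the optimal reparametrization by invoking ``properness of the $\Diff^k(M)$-action,'' but this action is not proper, and $\Diff^k(M)$ for finite $k$ is only a half-Lie group (left composition is merely continuous), so standard properness-based compactness tools are unavailable. The paper's actual argument (\cref{thm:metric_convex_shape}) avoids any appeal to properness: one takes a minimizing sequence of paths $f^n$ from $f_1$ to $f_2\circ\varphi_n$, uses the master estimate to get a uniform $H^1((0,1),H^k)$-bound, extracts a weak limit $f$, upgrades endpoint convergence to strong $H^{k-\varepsilon}$-convergence via Arzel\`a--Ascoli and compact Sobolev embedding, and then uses the \emph{Hausdorff property of the lower-regularity shape space} $\Shape^{k-\varepsilon}(M,\bR^d)$ to conclude that $f(1)=f_2\circ\varphi$ for some $\varphi\in\Diff^{k-\varepsilon}(M)$; finally, a composition-regularity argument in the style of Inci--Kappeler--Topalov shows that $\varphi=f_2^{-1}\circ f(1)\in\Diff^k(M)$. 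You would need to replace your properness step with an argument of this kind; as stated, the extraction of $\bar\varphi$ does not go through.
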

Note that, using density arguments, this result also implies that the metric completion of
the space $(\Imm^l(M,\bR^d), G^k)$ for $3\leq k<l$ is precisely given by the space $(\Imm^k(M,\bR^d), G^k)$ (and similarly for the shape space $(\Shape^l(M,\bR^d), G^k)$).

The proof of this theorem follows from more general Theorems --- \cref{thm:metric_completeness_abstract}, \cref{thm:existence_abstract}, \cref{thm:metric_complete_shape} and \cref{thm:metric_convex_shape} --- which are formulated in terms of an abstract Riemannian metric on the space of immersions satisfying certain $L^p$-bounds.
The mean curvature weights in $G^k$ are used to ensure that these bounds hold, using
various Michael--Simon--Sobolev type estimates~\cite{michael1973sobolev}, which is shown in~\Cref{lemma:assumptions}.
In particular, the result holds for any metric of order $k$ that controls $G^k$.
As discussed in~\cref{rem:otherchoices} the metric $G^k$ is not necessarily the optimal choice in terms of the chosen powers of the mean curvature weight; it is merely the most compact to write down, which is why we decided to present the results for this particular choice of weights.
Furthermore, we emphasize that, as shown by Atkin and Ekeland~\cite{atkin1975hopf,ekeland1978hopf}, the theorem of Hopf--Rinow is invalid in this infinite-dimensional setting, and thus the existence of minimizing geodesics is not an automatic consequence of the metric completeness, but has to be proven separately.

To prove these results we develop in \cref{thm:abstract_completeness} a general criterion for inheritance of completeness and existence of minimizing geodesics on abstract Riemannian manifolds that are open subsets of a complete manifold, where the completeness of the larger space is with respect to a different metric.
In our case, the larger space is the Hilbert space $H^l(M,\bR^d)$.
This criterion is a generalization of the strategy used for the completeness results on the space of curves~\cite{bruveris2015completeness,bauer2023sobolev,bauer2024completeness}.

We believe that this result is of independent interest and will be useful in other important cases.
For example, the space of all $H^k$-probability densities or the space of $H^k$-Riemannian metrics can naturally be regarded as open subsets of a Hilbert space. 
To illustrate its practicality, we further examine a simple, synthetic example involving a particular submanifold within the space of $\ell^2$-sequences, cf.~\cref{ex:l2}.

In our situation of $\Imm^l(M,\bR^d)$, the completeness criterion (essentially) reduces the completeness of the space of immersions to controlling various geometric quantities --- such as the metric, mean curvature, and their derivatives --- on $G^k$-metric balls. 
These estimates, which are the main technical contribution of the paper, are presented in ~\cref{sec:bounds}. Once these estimates are established the metric and geodesic completeness follow readily from the abstract result.
This is done in \cref{sec:completeness}.
The existence of minimizing geodesics requires one to check an additional technical assumption, which we show in~\cref{sec:existence} to be satisfied.

\subsection{Open Questions and Future Work}
The results of this article lead us to the following conjecture regarding completeness properties of reparametrization-invariant Sobolev metrics on spaces of immersions of submanifolds of general dimension, which is a more precise version of the conjecture of Mumford mentioned above:
\begin{conjecture*}[Completeness Properties on the Space of Immersed Submanifolds]
Let $M$ be an $m$-dimensional, closed manifold and let $(N,\bar g)$ be a $d$-dimensional, Riemannian manifold of bounded geometry with $d>m$. Let $\Imm^l(M,N)$  denote the Sobolev space of all immersed submanifolds of type $M$ in $N$,  where $l>\frac{m}2+1$.

For any (possibly non-integer) $k>\frac{m}{2}+1$ there exists a reparametrization-invariant, mean-curvature weighted Sobolev metric $G^k$ of order $k$ such that the following holds:
\begin{enumerate}[label=(\alph*)]
\item {\bf Metric  Completeness of $\Imm^k(M,N)$:} The space $(\Imm^k(M,N), G^k)$ is metrically complete.
\item {\bf Geodesic Completeness of $\Imm^l(M,N)$ for $l\geq k$:} The space $(\Imm^l(M,N), G^k)$ is geodesically complete. The result continues to hold for $l=\infty$, i.e., the space of smooth immersions $\Imm(M,N)$ equipped with the metric $G^k$ is geodesically complete.
\item {\bf Existence of Minimizing Geodesics on $\Imm^k(M,N)$:} For any two immersed surfaces $f_0,f_1$ in the same connected component of $\Imm^k(M,N)$ there exists a minimizing $G^k$-geodesic in $\Imm^k(M,N)$ connecting them.
\item {\bf Metric  Completeness of $\Shape^k(M, N)$:} The space $\Shape^k(M, N)$ is metrically complete with respect to the metric induced by $G^k$.
 \item {\bf Existence of Optimal Reparametrizations:} For any $[f_1], [f_2] \in \Shape^k(M, N)=\Imm^k(M,N)/\Diff^k(M)$ in the same connected component,
there exists an optimal reparametrization $\bar \varphi\in \Diff^k(M)$
attaining the infimum in the definition of the quotient distance.
\item {\bf Shape Space is a Geodesic Length Space:} For any two elements $[f_1], [f_2] \in \Shape^k(M, N)$ in the same connected component,
there exists a minimizing connecting geodesic in $\Shape^k(M, N)$, in the sense of metric geometry, for the metric induced by $G^k$.
\end{enumerate}

\end{conjecture*}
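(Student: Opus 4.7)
\emph{Strategy.} The plan is to follow the two-step blueprint used in the case $m=2$, $N=\bR^d$: \emph{(i)} apply the abstract completeness criterion of \cref{thm:abstract_completeness} by realizing $\Imm^l(M,N)$ as an open subset of a metrically complete Hilbert manifold, and \emph{(ii)} establish the $L^p$-bounds on the second fundamental form and its covariant derivatives along $G^k$-metric balls required by that criterion. For a general target $(N,\bar g)$ of bounded geometry, the natural ambient manifold is $H^l(M,N)$ itself, equipped with a convenient complete Riemannian structure, for instance the one induced by a Nash isometric embedding $N\hookrightarrow \bR^D$: bounded geometry of $N$ together with $l>m/2+1$ makes $H^l(M,N)$ a smooth Hilbert manifold, and the isometric embedding lets one inherit metric completeness from $H^l(M,\bR^D)$. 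Since \cref{thm:abstract_completeness} is already stated for open subsets of an abstract complete manifold, no further generalization of the criterion is required.

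\emph{Geometric estimates.} The main new technical content is the extension of \cref{sec:bounds} to arbitrary $m$, curved $N$, and fractional $k$. For $m$-dimensional immersions into $\bR^D$ the Michael--Simon--Sobolev inequality asserts that $\|u\|_{L^{m/(m-1)}(M)}\leq C\int_M (|\nabla u|+|u||\H|)\,\vol$ for nonnegative $u$, with $C$ depending only on $m$. Iterating this with powers of $u$ and combining with interpolation yields a full scale of Sobolev embeddings $H^s(M)\hookrightarrow L^p(M)$ with constants controlled by $L^p$-norms of $\H$. The curvature weights in $G^k$ should therefore be tuned to $m$: the exponent $4$ on $|\H|$ used for $m=2$ should be replaced by one scaling with $m$ (the sharp value is to be determined by the estimates), chosen so that a bound on $G^k_f(v,v)$ yields $L^m$-control on $\H$, the quantity entering the Michael--Simon inequality. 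For curved targets $N$, Hoffman--Spruck's Riemannian extension of Michael--Simon provides the appropriate substitute, with extra sectional-curvature terms absorbable via bounded geometry. Non-integer $k$ is handled by transferring the fractional Sobolev techniques of~\cite{bauer2024completeness}, working with norms defined via the induced metric $g=f^*\bar g$ and tracking interpolation constants uniformly along metric balls.

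\emph{Main obstacles.} The hardest step will be the geometric analysis in~\emph{(ii)} in full generality, for several compounding reasons. First, the Gauss--Codazzi and Ricci commutator identities used to control $\nabla^j\H$ become substantially more intricate for $m>2$ and for curved $N$, producing many error terms involving intrinsic and extrinsic curvatures of $M$ as well as sectional curvature of $N$; these must all be reabsorbed through a delicate inductive bootstrap. Second, the admissible range $k>m/2+1$ in the conjecture is sharp at the Sobolev embedding threshold, so treating non-integer $k$ near this threshold is essential; the resulting estimates require fractional Leibniz and composition inequalities on $(M,g)$ whose constants must be controlled uniformly along a metric ball, which is nontrivial because $g$ itself varies over the ball. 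Third, the bounded-geometry hypothesis on $N$ must be propagated along possibly very large immersions, requiring localization together with uniform extrinsic estimates on the Nash embedding of $N$. Once these $L^p$-bounds are in place, the remaining conclusions --- metric and geodesic completeness, existence of minimizing geodesics, and descent to shape space --- should follow the same pattern as in~\cref{sec:completeness,sec:existence}.
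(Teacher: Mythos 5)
This statement is a \emph{conjecture} that the paper explicitly leaves open; there is no proof in the paper to compare against. What the paper does offer is a paragraph of commentary on why the authors believe the conjecture is within reach: they say that for integer $k$ and Euclidean ambient space the general criterion \cref{thm:abstract_completeness} and most of the estimates of \cref{sec:bounds} already work in the required generality, that the case of a non-Euclidean ambient manifold is harder (pointing to the contrast between \cite{bauer2023sobolev} and \cite{bruveris2015completeness}), and that non-integer $k$ is harder still because ``the theory of fractional (non-integer) Michael--Simon--Sobolev inequalities is not developed for arbitrary manifolds.''

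Your sketch matches the authors' own assessment for the integer, Euclidean case: realize the immersion space as an open subset of a complete Hilbert manifold, apply \cref{thm:abstract_completeness}, and rely on dimension-$m$ Michael--Simon--Sobolev inequalities with curvature weights retuned to $m$ to recover the $L^p$-control on $\H$, $\Gamma$ and $S$ along metric balls. That part is a reasonable blueprint and essentially what the authors say they expect. However, you significantly underestimate the obstruction in the fractional case: you propose to handle non-integer $k$ ``by transferring the fractional Sobolev techniques of \cite{bauer2024completeness},'' but \cite{bauer2024completeness} treats \emph{curves}, where the required Gagliardo--Nirenberg/Michael--Simon machinery on the evolving intrinsic geometry is one-dimensional and comparatively tame. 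The authors flag precisely this step as the main unresolved difficulty: there is currently no fractional analogue of the Michael--Simon--Sobolev inequality on arbitrary immersed submanifolds with the needed dependence on the mean curvature (cf.\ \cite{cabre2022fractional}, which only treats convex hypersurfaces and a specific parameter range). So your claim that this is a matter of ``tracking interpolation constants uniformly along metric balls'' is not supported -- the inequality you would be tracking does not yet exist in the required generality, and producing it is a genuine open problem rather than a bookkeeping step. A similar caution applies to the Nash-embedding reduction for curved $N$: the paper does not commit to this route and instead flags the manifold-valued case as substantially harder, in line with the additional curvature and parallel-transport terms that entered in \cite{bauer2023sobolev}; treating it as a straightforward reduction to $\bR^D$ glosses over the loss of the flat ambient structure that the Euclidean proof uses at several points (e.g.\ the identity $S=\nabla Tf$ and the explicit variation formulas).
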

We believe that the results of this paper establish a clear path toward proving this conjecture, at least for integer $k$ and  Euclidean ambient space; in particular, the general criterion \cref{thm:abstract_completeness} and most of the estimates of \cref{sec:bounds} work in this generality.
The case of a non-Euclidean ambient space is more challenging, cf.\ the completeness for manifold-valued curves~\cite{bauer2023sobolev} as compared to the same results for curves with values in Euclidean space~\cite{bruveris2015completeness}.
Generalizing to non-integer $k$ seems to be even more delicate, as it seems that the theory of fractional (non-integer) Michael--Simon--Sobolev inequalities is not developed for arbitrary manifolds, cf.~\cite{cabre2022fractional}.

An important generalization of this conjecture, which seems quite challenging at the moment, is characterizing the weights that ensure completeness;
in particular, we do not know if non-weighted Sobolev metrics on $\Imm^k(M,\bR^d)$ are complete.
That is, we do not know whether the intermediate terms in $G^k$ as defined above are essential. Finally, for $k<\frac{m}{2}+1$, geodesic incompleteness is expected and is known in some cases (e.g., non-weighted metrics), but the question is open whether it is true for arbitrary weights.

\noindent {\bf Going beyond the Riemannian Case.}
A natural extension of the present results is to consider Finsler metrics. 
Notably, since the proofs of metric completeness and existence of minimizing geodesics involve estimates that are based purely on norm properties, without any reliance on the inner product structure, we expect that at least these extend to Finsler metrics in a rather straightforward way.
In particular, this would allow one to construct a Finsler-type Sobolev metric of order two that is metrically complete on the space of $W^{2,p}$-immersions with $p>2$.
It is less clear whether the geodesic completeness results would similarly translate to this setting.

\noindent {\bf Completeness on the Space of Embeddings.}
Another natural extension of this work is to the space of embeddings. 
As embeddedness is a non-local condition and thus cannot be directly detected with the metrics studied in this article, these metrics are incomplete when restricted to embeddings.
For the case of embedded curves, this was recently overcome by Reiter, Schumacher and collaborators~\cite{reiter2021sobolev,dohrer2025complete} by adding certain terms to metric of a non-local type.
We envision that the results of the present article could open up the path to a similar result in the context of the space of embedded surfaces.

\noindent {\bf Convergent discretizations.}
Numerical applications require geodesics in $\Imm^k(M,\bR^d)$ to be discretized in spacetime,
including the associated geodesic equations or the variational problem of finding minimizing geodesics.
While it is relatively straightforward to show consistency of a given discretization,
the convergence of discretely computed geodesics to continuous ones for increasing resolution is more delicate.
It typically requires mimicking compactness and completeness arguments from the continuous setting.
In the case of curves this was achieved in \cite{beutler2025discretegeodesiccalculusspace}
by exploiting a discrete path energy that controlled the continuous path energy.
A natural task is to devise similar approaches in our setting of surfaces.

 \subsection{Structure of the Article}
In \cref{sec:completenessAbstract} we present our criterion for completeness properties of Riemannian  manifolds which are an open subset of another complete Riemannian manifold. Next, we introduce the space of surfaces and Riemannian metrics thereon in \cref{sec:spaces}, followed by a review of certain Michael--Simon--Sobolev inequalities in \cref{sec:SobolevEmbeddings}. The main technical estimates on bounding intrinsic geometric quantities on metric balls are presented in \cref{sec:bounds} and are subsequently used in \cref{sec:completeness} to obtain the desired completeness results.
Finally, in \Cref{sec:curvature_weighted} we show that the curvature weighted $H^k$-metric satisfies the assumptions of the abstract theorems.
A table of the most relevant notation, as used throughout the article, is presented in \cref{app:table_notation}.

\subsection*{Funding}
This work was supported by the Deutsche Forschungsgemeinschaft (DFG, German Research Foundation) via project 431460824 -- Collaborative Research Center 1450 as well as via Germany’s Excellence Strategy project 390685587 -- Mathematics Münster: Dynamics--Geometry--Structure.
Part of this work was written when CM was visiting the University of Toronto; CM is grateful for their hospitality. 
MB was partially supported by NSF grants 2426549 and 2526630.
CM was partially supported by ISF grant 2304/24. 
MB and CM were partially supported by BSF grant 2022076. Parts of this work was 
written when MB was attending the program
Infinite-dimensional Geometry: Theory and Applications at the Erwin Schr\"odinger International
Institute for Mathematics and Physics (ESI). The authors sincerely
thank the organizers of the program and the ESI for fostering a stimulating environment that encouraged fruitful discussions and enabled this work.
\subsection*{Acknowledgements}
The first author has worked on several cases of the completeness conjecture for spaces of immersions for the better part of the last ten years. During this period, there were numerous (unsuccessful) attempts to establish the result in the context of the present paper. The author wishes to express his gratitude to all those who patiently discussed and commented on these various efforts. This includes, in particular, Martins Bruveris, Philipp Harms, Peter Michor, Martin Rumpf and all regular participants of the annual workshop “Math en plein air”. Their collective feedback and insights have been instrumental in developing the strategy that ultimately led to the successful completion of this work. In addition we sincerely thank Fabian Rupp, who pointed us towards the work of Kuwert and Sch\"atzle~\cite{kuwert2002gradient}, which allowed us to overcome one of the major road blocks.

\section{Completeness for Strong Riemannian Metrics}\label{sec:completenessAbstract}
In this section we present an abstract method for  obtaining completeness of strong Riemannian  manifolds which are an open subset of a complete Riemannian manifold. We will later use this result, which is based on a method developed in~\cite{bauer2015metrics,bruveris2015completeness,bauer2023sobolev} for the case of immersed curves, to obtain the desired completeness result for the space of surfaces. In recent work~\cite{bauer2025regularity}, an analogous result for infinite-dimensional half-Lie groups endowed with right-invariant metrics was obtained. 
Half-Lie groups are smooth manifolds and topological groups such that right translations are smooth, but left translations are merely required to be continuous~\cite{marquis2018half}; this typically applies to diffeomorphism groups of finite regularity.
Our setting (immersions rather than diffeomorphisms) is considerably more involved, since we cannot rely on the simplifications provided by a group structure; this difficulty is addressed by imposing the existence of a complete ambient manifold, bounding the given Riemannian metric as an additional assumption.

\begin{theorem}[Inheritance of Completeness Properties]\label{thm:abstract_completeness}
Let $(\ol{\mathcal M},\ol G)$ be a metrically complete smooth strong Riemannian manifold.
Let $\mathcal M\subset \ol{\mathcal M}$ be open, and let $G$ be a smooth strong metric on $\mathcal M$. Assume that on any $G$-metric ball $B$ in $(\mathcal M,G)$ the following hold:
\begin{enumerate}[label=(\alph*)]
\item \label{assumption1} There exists $C>0$ such that for all $x\in B$ and $h\in T_x\mathcal M$ we have \[\ol G_x(h,h) \leq C G_x(h,h).\]
\item \label{assumption2} The closure of $B$ in $\ol{\mathcal M}$ (with respect to $\ol{G}$) lies in $\mathcal M$.
\end{enumerate}
Then $(\mathcal M,G)$ is metrically and geodesically complete.
Let furthermore $(\ol{\mathcal M},\ol G)$ be a Hilbert space, and assume the following condition:
\begin{enumerate}[label=(\alph*), start=3]
\item \label{assumption3}
The path energy associated with $G$,
\begin{equation}
E:H^1((0,1),(\ol{\mathcal{M}},\ol{G}))\to[0,\infty],\qquad E(f)=\int_0^1G_{f_t}(\dot f_t,\dot f_t)\,\mathrm d t,
\end{equation}
is weakly sequentially lower semicontinuous along sequences of paths with fixed end points in $\mathcal M$ (where we set $G_{f}(h,h)=\infty$ for $f\notin\mathcal M$).
\end{enumerate}
Then there exists a $G$-minimizing geodesic between any two points $f_0,f_1$ at finite $G$-distance, which is equivalent to $f_0,f_1$ lying in the same connected component of $\mathcal{M}\subset\ol{\mathcal M}$.
\end{theorem}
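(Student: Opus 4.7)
The plan is to establish the four claims in turn, deducing each from the previous ones together with the given assumptions.

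For \textbf{metric completeness}, I take a $G$-Cauchy sequence $(f_n)\subset\mathcal M$. It is $G$-bounded, hence contained in some $G$-ball $B$ around $f_0$, and for $n,m$ sufficiently large the near-minimizing $G$-curves between $f_n$ and $f_m$ also stay in $B$; assumption \ref{assumption1} then bounds their $\ol G$-length by $\sqrt C$ times their $G$-length, so $(f_n)$ is $\ol G$-Cauchy and converges to some $f_\infty\in\ol{\mathcal M}$, which lies in $\mathcal M$ by \ref{assumption2}. To upgrade to $G$-convergence I would invoke the classical fact that a smooth strong Riemannian metric on a Hilbert manifold induces the underlying manifold topology: since $\mathcal M$ is open in the Hilbert space $\ol{\mathcal M}$, this manifold topology on $\mathcal M$ coincides with the $\ol G$-subspace topology, so $\ol G$-convergence to the interior point $f_\infty$ is the same as $G$-convergence. \textbf{Geodesic completeness} then follows directly: a maximal $G$-geodesic $\gamma:[0,T)\to\mathcal M$ has constant $G$-speed, so if $T<\infty$ then $\gamma$ is $G$-Cauchy as $t\uparrow T$, converges in $\mathcal M$ by metric completeness, and can be continued past $T$ by short-time existence of geodesics (Picard--Lindel\"of applied to the geodesic equation in a $G$-chart), contradicting maximality.

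For the \textbf{existence of minimizing geodesics} between $f_0,f_1$ at finite $G$-distance $d$, I take a minimizing sequence of paths $\gamma_n$ in $\mathcal M$ with the given endpoints and $E(\gamma_n)\to d^2$. The energy bound keeps the $\gamma_n$ in a common $G$-ball, and by \ref{assumption1} they are uniformly bounded in the Hilbert space $H^1((0,1),\ol{\mathcal M})$; some subsequence therefore satisfies $\gamma_n\rightharpoonup\gamma_\infty$ weakly in $H^1$. Endpoint evaluation is a bounded linear functional on $H^1$, so $\gamma_\infty$ still has endpoints $f_0,f_1$, and assumption \ref{assumption3} yields $E(\gamma_\infty)\le\liminf E(\gamma_n)=d^2<\infty$, which forces $\gamma_\infty(t)\in\mathcal M$ for almost every $t$.

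The main technical step is promoting this to \emph{every} $t$. Set $T:=\gamma_\infty^{-1}(\mathcal M)$, which is open (as $\mathcal M$ is open and $\gamma_\infty$ is $\ol G$-continuous) and of full measure. If $[0,1]\setminus T$ were nonempty it would be a closed subset of $[0,1]$ with infimum $s\in(0,1]$ (since $0\in T$) satisfying $s\notin T$ and $[0,s)\subset T$. Cauchy--Schwarz applied to the finite $G$-energy of $\gamma_\infty|_{[0,s)}$, which takes values in $\mathcal M$, bounds its $G$-length by $\sqrt{E(\gamma_\infty)}$, so $\gamma_\infty(s)=\lim_{t\uparrow s}\gamma_\infty(t)$ (in $\ol G$) lies in the $\ol G$-closure of a fixed $G$-ball, which by \ref{assumption2} is contained in $\mathcal M$---contradicting $s\notin T$. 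Hence $\gamma_\infty$ is a path entirely in $\mathcal M$ with $E(\gamma_\infty)=d^2$, and a constant-speed reparametrization is the desired minimizing geodesic.

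The final \textbf{equivalence with being in the same connected component} is simpler: a path of finite $G$-length is $\ol G$-continuous by \ref{assumption1}, so finite $G$-distance forces the endpoints into the same component of $\mathcal M$; conversely, components of the open set $\mathcal M\subset\ol{\mathcal M}$ are path-connected, and joining $f_0,f_1$ by a piecewise smooth path yields finite $G$-length by smoothness of $G$ on its compact image. The only delicate step throughout is the pointwise-in-$\mathcal M$ argument for $\gamma_\infty$ above, which is where the interplay between \ref{assumption2} and the $G$-length bound on sub-intervals is essential; everything else is standard Hilbert-manifold Riemannian machinery once assumptions \ref{assumption1}--\ref{assumption3} have been invoked.
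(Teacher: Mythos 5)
Your argument follows the same route as the paper's: use assumption~\ref{assumption1} to dominate $\ol G$ by $G$ on $G$-balls and thereby transfer Cauchyness, use~\ref{assumption2} to land the limit in $\mathcal M$ and the strong-metric-topology fact to upgrade to $G$-convergence, and run the direct method with~\ref{assumption3} for minimizing geodesics. Your minimal-$s$ argument for promoting $\gamma_\infty(t)\in\mathcal M$ from almost every $t$ to every $t$ is in fact a more explicit version of a step the paper leaves terse (the paper simply asserts this follows from $\ol G$-continuity of the path and~\ref{assumption2}), and it correctly isolates where assumption~\ref{assumption2} enters; the remaining differences --- the escape-lemma proof of geodesic completeness instead of citing Lang, and using boundedness of endpoint evaluation on $H^1$ rather than the embedding into $C^{0,1/2}$ --- are cosmetic.
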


\begin{remark}[Metric Completeness Implies Geodesic Completeness]
Note that the Theorem of Hopf--Rinow is not valid in infinite dimensions~\cite{atkin1975hopf,ekeland1978hopf}. In particular, metric and geodesic completeness are not equivalent, and neither implies existence of minimizing geodesics.
Nevertheless, for smooth strong metrics, metric completeness still implies geodesic completeness \cite[VIII, Prop.\,6.5]{lang2012fundamentals}, and thus in \cref{thm:abstract_completeness} it is sufficient to prove metric completeness and existence of minimizing geodesics.
\end{remark}

\begin{proof}
\textbf{Metric completeness.} Let $B=B_r(x_0)$ be a metric ball of radius $r>0$ in $(\mathcal M,G)$, and let $x_1,x_2\in B$ be two distinct points.
There exists a curve $\gamma$ connecting $x_1$ and $x_2$ whose $G$-length $\ell_G$ is less than $2\dist_G(x_1,x_2)<4r$ (note that the geodesic distance is positive between different points since $G$ is a strong metric).
In particular, this curve must lie in $B_{3r}(x_0)$, since each point in $\gamma$ is of distance at most $\ell_G/2<2r$ from either $x_1$ or $x_2$.
Now, assumption~\ref{assumption1} implies that the $\ol{G}$-length of $\gamma$, denoted by $\ell_{\ol{G}}$, satisfies $\ell_{\ol{G}}\le \sqrt{C} \ell_G$, where $C$ is the constant in assumption~\ref{assumption1} of the ball $B_{3r}(x_0)$.
Thus,
\[
\dist_{\ol{G}}(x_1,x_2) \le \ell_{\ol{G}} \le \sqrt{C} \ell_G < 2\sqrt{C}\dist_G(x_1,x_2),
\]
so we obtain that the $\ol{G}$-distance is controlled by the $G$-distance on $G$-metric balls.

Now, let $(x_j)_{j\in\mathbb{N}}$ be a Cauchy sequence in $(\mathcal M,G)$.
Since it is a Cauchy sequence, it lies in some $G$-metric ball $B$.
By what we showed above, it follows that it is also a Cauchy sequence in $(\ol{\mathcal M},\ol{G})$.
Since this space is metrically complete, we have that $x_j\to x_0$ in $(\ol{\mathcal M},\ol{G})$ for some $x_0\in \ol{\mathcal M}$;
in particular we have that $x_0$ is in the $\ol{G}$-closure of $B$.
Therefore, by assumption~\ref{assumption2} we have that $x_0\in \mathcal M$, i.e., $x_j \to x_0$ in $(\mathcal M,\ol{G})$.
Since both $G$ and $\ol{G}$ are strong metrics on $\mathcal M$, they induce the same topology, and thus $x_j\to x_0$ in $(\mathcal M,G)$, which completes the proof.

\noindent\textbf{Connected components.}
We now show that the connected components of $\mathcal M$, as a subset of $\ol{\mathcal M}$, coincide with the connected components induced by $\dist_G$ --- that is, $f_0,f_1\in\mathcal M$ are in the same connected component if and only if $\dist_G(f_0,f_1)<\infty$.

Since $G$ is a strong Riemannian metric it induces the same topology on each tangent space as the original manifold topology \cite[VII Prop.\,6.1]{lang2012fundamentals}. 
Thus, $G$ induces the same topology on $\mathcal M$ as the original topology (the induced topology as a subset of $\ol {\mathcal M}$).
In particular, the connected components are the same, and are open.

Points in connected open subsets of smooth manifolds can be connected by smooth (or piecewise-smooth) paths, and such a path $f:[0,1]\to\mathcal{M}$ has finite path energy $E(f)$
by the smoothness of the metric.
Therefore any $f_0,f_1$ from the same connected component can be joined by a path of finite path energy. 
Thus, each $f_0, f_1$ in the same connected component have finite distance $\dist_{G}(f_0,f_1)=\inf\{\sqrt{E(f)}:\,f\text{ connects }f_0,f_1\}$.
Conversely, $\dist_{G}(f_0,f_1)<\infty$ implies that $f_0,f_1$ lie in the same path-connected component and thus in the same connected component.

\noindent\textbf{Minimizing geodesics.}
We will prove the existence of minimizing geodesics by the direct method of the calculus of variations.
Consider a minimizing sequence $f^n$, $n\in\bN$, of paths connecting $f_0$ to $f_1$ with finite, monotonously decreasing path energy $E(f^n)\to\dist_{G}(f_0,f_1)^2$,
then all these paths lie within a single, sufficiently large $G$-metric ball $B$.
By assumption~\ref{assumption1}, we have
$$\|\dot f^n\|_{L^2((0,1),(\ol{\mathcal{M}},\ol{G}))}^2=\int_0^1\ol G^k(\dot f^n,\dot f^n)\,\mathrm d t\lesssim E(f^n)\leq E(f^0)<\infty$$ for all $n$.
Due to $f^n_s=f_0+\int_0^s\dot f^n_t\,\mathrm d t$ we additionally have
\begin{align}
\|f^n\|_{L^2(\ol{\mathcal{M}},\ol{G}))}^2
&\leq2\|f_0\|_{\ol{G}}^2+2\!\int_0^1\!\left\|\int_0^s\!\!\dot f^n_t\,\mathrm d t\right\|_{\ol{G}}^2\mathrm d s
\leq2\|f_0\|_{\ol{G}}^2+2\!\int_0^1\!\!\left(\int_0^s\!\!\|\dot f^n_t\|_{\ol{G}}\,\mathrm d t\right)^{\!\!2}\mathrm d s\\
&\leq2\|f_0\|_{\ol{G}}^2+2\!\int_0^1\left(\int_0^1\|\dot f^n_t\|_{\ol{G}}\,\mathrm d t\right)^2\mathrm d s
\leq2\|f_0\|_{\ol{G}}^2+2\|\dot f^n\|_{L^2((0,1),(\ol{\mathcal{M}},\ol{G}))}^2,
\end{align}
and $f^n$ is uniformly bounded in $H^1((0,1),(\ol{\mathcal{M}},\ol{G}))$.
Since $H^1((0,1),(\ol{\mathcal{M}},\ol{G}))$ is a Hilbert space, there exists a subsequence, for simplicity still indexed by $n$,
with $f^n\rightharpoonup f$ weakly in $H^1((0,1),(\ol{\mathcal{M}},\ol{G}))$ for some $f\in H^1((0,1),(\ol{\mathcal{M}},\ol{G}))$.
Moreover, by $\|f_t-f_s\|_{\ol{G}}=\|\int_t^s\dot f_r\,\mathrm d r\|_{\ol{G}}\leq\int_t^s\|\dot f_r\|_{\ol{G}}\,\mathrm d r\leq\sqrt{t-s}\|\dot f\|_{L^2((0,1),(\ol{\mathcal{M}},\ol{G}))}$ for any $0\leq t\leq s\leq1$,
the space $H^1((0,1),(\ol{\mathcal{M}},\ol{G}))$ continuously embeds into the H\"older space $C^{0,1/2}([0,1],(\ol{\mathcal{M}},\ol{G}))$
so that $f^n\rightharpoonup f$ weakly also in that space.
As a consequence we have pointwise weak convergence $f_t^n\rightharpoonup f_t$ in $(\ol{\mathcal M},\ol G)$ for all $t\in[0,1]$.
In particular, the limit $f$ is still a continuous path in $\ol{\mathcal M}$ connecting $f_0$ with $f_1$.
The lower semicontinuity condition \ref{assumption3} now directly implies
\begin{equation}
\dist_G(f_0,f_1)^2
=\liminf_{n\to\infty} E(f^n)
\geq E(f).
\end{equation}
In particular, it follows that $f_t\in \mathcal{M}$ for almost every $t$, and since the path $t\mapsto f_t$ is continuous in $\ol{\mathcal{M}}$, it follows from assumption~\ref{assumption2} that $f_t\in \mathcal{M}$ for every $t\in [0,1]$, hence $f$ is a valid path in $\mathcal{M}$ from $f_0$ to $f_1$, and thus, by the inequality above, a minimizing geodesic.
\end{proof}

\begin{remark}[Smoothness Requirements]
    Note that the smoothness of the Riemannian metric is not a necessary condition for the result. Any regularity which guarantees that metric completeness implies geodesic completeness and that the topology induced by the metric coincides with the manifold topology would be sufficient.
\end{remark}

\begin{example}[$\ell_2$-Sequences Bounded Away from $-1$]\label{ex:l2}
We now show a simple example illustrating the use of \cref{thm:abstract_completeness}.
Let $\ol{\mathcal M} = \ell_2$ be the Hilbert space of real square-summable sequences, and let $\ol{G}$ be its standard metric.
Let 
\[\textstyle
\mathcal M = \{x\in \ell_2 ~:~ \inf_{i\in \mathbb{N}} x_i > -1\},
\]
which is obviously an open subset of $\ol{\mathcal M}$, hence its tangent space at every point can be identified with $\ell_2$.
Endow it with the metric
\[
G_x(h,h) = \sum_{i\in \mathbb{N}}\left(1 + \frac{1}{(x_i+1)^2}\right) h_i^2.
\]
Note that this is a well-defined metric by the definition of $\mathcal M$ and that obviously $\ol{G}\le G$ in $\mathcal M$, so assumption~\ref{assumption1} trivially holds.

To prove assumption~\ref{assumption2}, we consider the map $F:\mathcal M\to \ell_\infty$,
$F(x) = (\log(x_i+1))_{i\in \mathbb{N}}$.
We have that
\[
\|T_xF[h]\|_\infty = \left\| \left(\frac{h_i}{x_i+1}\right)_{i\in \mathbb{N}}\right\|_\infty \le \left(\sum_{i\in \mathbb{N}}\frac{h_i^2}{(x_i+1)^2}\right)^{1/2}\le \sqrt{G_x(h,h)}.
\]
Hence $F$ is $1$-Lipschitz and in particular bounded on $G$-metric balls.
It follows that, for every $G$-metric ball $B\subset \mathcal M$, we have
\[
\inf_{x\in B}\inf_{i\in \mathbb{N}} (x_i+1) > 0,
\]
and any $\ol{G}$-limit point of a sequence in $B$ is also in $\mathcal M$.
Thus assumption~\ref{assumption2} is satisfied, and $(\mathcal M,G)$ is metrically  and geodesically complete.

To prove assumption~\ref{assumption3}, consider a sequence $x^n\rightharpoonup x$ converging weakly in $H^1((0,1),\ell^2)$ with fixed $x^n(0),x^n(1)\in\mathcal M$.
We can restrict ourselves to the connected component of positive sequences.
Due to $\|y_i\|_{H^1((0,1))}\leq\|y\|_{H^1((0,1),\ell^2)}$ for any $y\in H^1((0,1),\ell^2)$,
extracting the $i$th component is a continuous linear operator from $H^1((0,1),\ell^2)$ to $H^1((0,1))$.
Therefore, for all $i$ we have $x_i^n\rightharpoonup x_i$ weakly in $H^1((0,1))$ with fixed $x_i^n(0),x_i^n(1)>0$.
Introducing the map $e:H^1((0,1))\to\bR$, $e(z)=\int_0^1(1 + (z(t)+1)^{-2})\dot z(t)^2\,\mathrm d t=\|\dot z\|_{L^2}^2+\|\partial_t\log (z+1)\|_{L^2}^2$,
the desired weak lower semicontinuity now follows via
\begin{align}
\liminf_{n\to\infty}E(x^n)
=\liminf_{n\to\infty}\int_0^1G_{x^n(t)}(\dot x^n(t),\dot x^n(t))\,\mathrm d t
&\geq\sum_{i\in \mathbb{N}}\liminf_{n\to\infty}e(x_i^n)\\
\geq\sum_{i\in \mathbb{N}}e(x_i)
&=\int_0^1G_{x(t)}(\dot x(t),\dot x(t))\,\mathrm d t
=E(x),
\end{align}
using the weak lower semicontinuity of $e$ along sequences $z^n$ in $H^1((0,1))$ with fixed end points $z^n(0),z^n(1)>-1$:
Indeed, let $\liminf_n e(z^n)<\infty$ and assume without loss of generality that the liminf actually is a limit (else pass to a subsequence).
By Poincar\'e's inequality it follows that $\log (z^n+1)$ is uniformly bounded in $H^1((0,1))$,
thus upon extracting a subsequence we have weak convergence $\log (z^n+1)\rightharpoonup w$ in $H^1((0,1))$.
By the compact embedding $H^1((0,1))\hookrightarrow C^0([0,1])$ we additionally have $z^n\to z$ and $\log(z^n+1)\to w$ in $C^0([0,1])$, which implies $w=\log(z+1)$.
Now $\liminf_n e(z^n)\geq e(z)$ follows from the lower semicontinuity of the $L^2$-norm under weak $L^2$-convergence.
\end{example}

\section{Spaces of Immersions and Notation}\label{sec:spaces}
From here on let $M$ be a smooth two-dimensional compact manifold without boundary. Furthermore, we will equip $M$ with a fixed smooth Riemannian metric~$\ol g$.
\subsection{Sobolev Spaces}\label{sec:SobolevSpaces}
Here we will introduce Sobolev spaces on sections of natural vector bundles over $M$, where we follow the classic textbook by Triebel~\cite[Section~7]{triebel1992theory2}.
For $l\in \mathbb N$ and $p\geq 1$ we write $W^{l,p}(\mathbb R^m,\mathbb R^n)$ for the Sobolev space of $\mathbb R^n$-valued functions on $\mathbb R^m$, i.e., the space of all functions that have weak derivatives up to order $l$ in $L^p$. In the special case $p=2$, we use the notation
$H^{l}(\mathbb R^m,\mathbb R^n):=W^{l,2}(\mathbb R^m,\mathbb R^n)$.

Next we consider a  a vector bundle $E$ of rank $n\in\mathbb N$ over $M$, and we let $C^{\infty}(M,E)$ denote the corresponding space of (smooth) sections. To generalize Sobolev spaces to this setting we choose a finite vector bundle atlas and a subordinate partition of unity, i.e.,
we let $(u_i:U_i \to u_i(U_i)\subseteq \mathbb R^m)_{i\in I}$ be a finite atlas for $M$ with vector bundle charts $(\psi_i:E|U_i \to U_i\times \mathbb R^n)_{i\in I}$
and $(\varphi_i)_{i\in I}$ a smooth partition of unity subordinate to $(U_i)_{i \in I}$.
Note that we can choose open sets $U_i^\circ$ such that $\on{supp}(\varphi_i)\subset U_i^\circ\subset \overline{U_i^\circ}\subset U_i$  and each $u_i(U_i^\circ)$ is an open set in $\mathbb R^m$ with Lipschitz boundary.

Using this we define for each $l \in \mathbb N$, $p\geq 1$ and $f \in C^{\infty}(M,E)$ the Sobolev norm of $f$ via
\begin{equation}
|||f|||_{W^{l,p}}^2 := \sum_{i \in I} \|\on{pr}_{\mathbb R^n}\circ\, \psi_i\circ (\varphi_i \cdot f)\circ u_i^{-1} \|_{W^{l,p}(\mathbb R^m,\mathbb R^n)}^2,
\end{equation}
where $\on{pr}_{\mathbb R^n}$ denotes the projection onto $\mathbb R^n$.
Then $|||\cdot|||_{W^{l,p}}$ is a norm, and we write $W^{l,p}(M,E)$ for the Banach completion of $C^{\infty}(M,E)$ under this norm. Furthermore, the space $W^{l,p}(M,E)$ is independent of the choice of atlas and partition of unity, up to equivalence of norms. In the special case $p=2$ we write $W^{l,p}(M,E)=H^{l}(M,E)$. We refer to \cite[Section~7]{triebel1992theory2} and \cite[Section~6.2]{grosse2013sobolev} for further details.

\subsection{The Manifold of Immersions}
For $d\geq 3$ and $l > 2$ we define the space of Sobolev immersions from $M$ into $\bR^d$ via
\begin{equation}
\Imm^l(M,\bR^d):=\left\{f\in H^l(M,\bR^d): T_xf:T_xM\to\bR^d \text{ is injective for all }x\in M\right\},
\end{equation}
where $H^l(M,\bR^d)$ denotes the space of Sobolev functions on $M$ with values in $\bR^d$. Note that $H^l(M,\bR^d)$ embeds in $C^1(M,\bR^d)$ precisely if $l>2$, and thus the (pointwise) condition of $f$ being an immersion (i.e.\ $Tf$ being injective) is well-defined, cf.~\cite{eichhorn2007global}.
The above definition extends to $l=\infty$ leading to the space $\Imm(M,\bR^d):=\Imm^{\infty}(M,\bR^d):=\bigcap_{l=1}^\infty\Imm^l(M,\bR^d)$ of all smooth immersions.

For any finite $l$, the space $\Imm^l(M,\bR^d)$ is an open subset of the Hilbert space $H^l(M,\bR^d)$ and thus is itself an infinite-dimensional Hilbert manifold. For $l=\infty$ it is only a Fr\'echet manifold as the modeling space $H^{\infty}(M,\bR^d)=\bigcap_{l=1}^\infty H^l(M,\bR^d)=C^{\infty}(M,\bR^d)$ is only a Fr\'echet space, but not a Hilbert or Banach space~\cite{kriegl1997convenient}.

\subsection{The Group of Diffeomorphisms and the Shape Space of Unparametrized Surfaces}\label{sec:diffgroup_action}
For $l > 2$ we will also consider the group of Sobolev diffeomorphisms $\on{Diff}^l(M)$, which is a  Hilbert manifold as well, 
since it is an open subset of  $H^l(M,M)$. For any finite $l$ it is not a Lie group, but only a half Lie group~\cite{marquis2018half,bauer2025regularity}: Right translations are smooth, but left translations are merely continuous. Only
for $l=\infty$ left translations become smooth, and it is an infinite-dimensional
Lie group in the sense of \cite[Section~43]{kriegl1997convenient}.

The group $\on{Diff}^l(M)$ acts continuously  on the space $\Imm^l(M,\bR^d)$ by composition from the right~\cite{inci2013diffeo}. The action is given by the mapping
\begin{equation}
r:\begin{cases}
       \Imm^l(M,\bR^d) \times \Diff^l(M) &\to \Imm^l(M,\bR^d),\\(f,\varphi) &\mapsto  f \circ \varphi,
       \end{cases}
\end{equation}
where the tangent prolongation of this group action is given by the mapping (using the same symbol for simplicity)
\begin{align}
r:
\begin{cases}T\Imm^l(M,\bR^d) \times \Diff^l(M) &\to T\Imm^l(M,\bR^d), \\
(f,h,\varphi) &\mapsto (f\circ\varphi,h \circ \varphi).
  \end{cases}
\end{align}
This allows us to define the quotient space of  unparametrized surfaces  via
\begin{equation}
\Shape^l(M,\bR^d):=\Imm^l(M,\bR^d)/\Diff^l(M).
\end{equation} 
This space does not carry the structure of an infinite-dimensional manifold; for $l=\infty$ it is an infinite-dimensional orbifold (thus almost a manifold)~\cite{cervera1991action}, whereas for finite $l$ we only know that it is a Hausdorff topological space~\cite[Proposition 6.2]{bruveris2015completeness}.

\subsection{The Induced Geometry of a Surface $f$}\label{sec:inducedGeometry}
Any surface $f \in \Imm^l(M,\bR^d)$ induces a Riemannian metric $g=f^*\langle \cdot,\cdot\rangle_{\bR^d}$
on $M$, i.e.,
\begin{align}
g(X,Y)= \langle Tf.X,Tf.Y\rangle_{\bR^d},
\end{align}
where $X,Y$ are vector fields on $M$.
Note that, for $l\neq \infty$, the Riemannian metric $g$ is not smooth either, but only of regularity $H^{l-1}$ (recall that $H^{l-1}$ for $l>2$ forms an algebra so that products of $H^{l-1}$-functions have same regularity).
It is at least continuous, though, as we assumed $l>2$ so that $H^{l-1}$ embeds into $C^0$ by the Sobolev embedding theorem.
As is customary, we will sometimes identify $g$ with its associated linear operator (more precisely, its associated bundle homomorphism) $TM\to T^*M$ using the same symbol.
Furthermore, the Riemannian metrics $g$ and $\langle\cdot,\cdot\rangle_{\bR^d}$ induce fiber metrics $g^{i,m}_j$ on all tensor bundles $(TM)^{\otimes i}\otimes (T^*M)^{\otimes j}\otimes (\bR^d)^{\otimes m}$. To keep the notation less cumbersome we will suppress the tensor type in the notation for the fiber metric, i.e., we will write $g=g^{i,m}_j$ independently of the type of tensor it acts on.
By identifying the space $L(P,Q)$ of bundle homomorphisms between tensor bundles $P$ and $Q$ with $Q\otimes P^*$ for $P^*$ the dual bundle to $P$, the metric also extends to such bundle homomorphisms (such as $g$ itself).

\begin{remark}[Fiber Metric for Elementary Tensors]\label{rem:fiberMetric}
The induced fiber metric is fully specified by its value for elementary tensors. Consider two elementary tensors $\alpha_1$ and $\alpha_2$ with
\begin{equation}\alpha_n=X^1_n\otimes \ldots \otimes X^i_n\otimes Y^1_n\otimes \ldots \otimes Y^j_n\otimes Z^1_n\otimes \ldots \otimes Z^m_n,\qquad n\in\{1,2\},
\end{equation}
where $X^1_n,\ldots, X^i_n\in T_xM$,
$Y^1_n,\ldots,Y^j_n\in T_x^*M$,
and $Z^1_n,\ldots,Z^m_n\in\bR^d$.
Then the induced fiber metric is defined as
\begin{equation}
g(\alpha_1,\alpha_2)=\prod_{n=1}^i g(X^n_1,X^n_2) \prod_{n=1}^j g^{-1}(Y^n_1,Y^n_2)\prod_{n=1}^m\langle Z^n_1,Z^n_2\rangle_{\bR^d},
\end{equation}
where $g^{-1}(Y_1,Y_2)\coloneq g(X_1,X_2)$ with $X_n$ defined as $g(X_n,\cdot)=Y_n$, $n=1,2$.
\end{remark}

\begin{remark}[Fiber Metric for Bundle Homomorphisms]\label{rem:metricBundleHom}
For bundle homomorphisms $A,B\in L(TM,T^*M)$ we can calculate $g(A,B)=\Tr(g^{-1}A^*g^{-1}B)$.
Indeed, let $X_1,X_2\in T_xM$ be a $g$-orthonormal basis with dual basis $X_1^*,X_2^*\in T_x^*M$, then $A$ and $B$ are identified with $A(X_1)\otimes X_1^*+A(X_2)\otimes X_2^*$,  $B(X_1)\otimes X_1^*+B(X_2)\otimes X_2^*$, thus
\begin{align}
\Tr(g^{-1}A^*g^{-1}B)
&=\Tr\left(\sum_{i=1}^2g^{-1}(X_i^*)\otimes A(X_i)\sum_{j=1}^2 g^{-1}(B(X_j))\otimes X_j^*\right)\\
&=\sum_{i,j=1}^2\langle g^{-1}(X_i^*),X_j^*\rangle\langle A(X_i),g^{-1}(B(X_j))\rangle
=g(A,B)
\end{align}
Analogously, $g(A,B)=\Tr(gA^*gB)$ for $A,B\in L(T^*M,TM)$ and $g(A,B)=\Tr(g^{-1}A^*gB)$ for $A,B\in L(TM,TM)$ as well as $g(A,B)=\Tr(gA^*g^{-1}B)$ for $A,B\in L(T^*M,T^*M)$.
Obviously, in all cases $|A|_g=|A^*|_g$.
Furthermore, for $A\in L(T^*M,TM)$ and $B\in L(TM,T^*M)$ we have
\begin{align}
|AB|_g^2
&=\Tr(g^{-1}B^*A^*gAB)
=\Tr(A^*gAgg^{-1}Bg^{-1}B^*)\\
&\leq\Tr(A^*gAg)\Tr(g^{-1}Bg^{-1}B^*)
=|A|_g^2|B|_g^2
\end{align}
due to the submultiplicative property of the trace for symmetric positive semidefinite (spd) operators
(note that $A^*gAg$ and $g^{-1}Bg^{-1}B^*$ are both spd with respect to the inner product $g$).
Analogously, $|BA|_g\leq|B|_g|A|_g$ as well as $|AY|_g\leq|A|_g|Y|_g$, $|BX|_g\leq|B|_g|X|_g$ for $X\in TM$, $Y\in T^*M$.
\end{remark}

We denote the induced volume form of $f$ ($g$, resp.) by $\vol$ and the associated total volume  by $\Vol:=\int_{M}\vol$.
Finally, we denote the corresponding Levi-Civita connection on $M$ by~$\nabla$.
The connection extends from $TM$ to the full fiber bundle by the Leibniz rule.
In particular, the connection acting on general $(i,j,m)$-tensor fields with $m>0$ is obtained by combining it with the trivial connection on $\bR^d$, cf.~\cite[Section 3.7]{bauer2011sobolev} for a detailed exposition.
This also allows us to define the induced connection Laplacian or rather Bochner Laplacian by $\Delta=\nabla^*\nabla$, where $\nabla^*$ denotes the adjoint of the covariant derivative with respect to the $L^2(g)$-inner product.
Note that $\nabla h\in T^*M$ for a scalar function $h$ on $M$, i.e., $\nabla h$ does not denote the gradient of $h$ (which would be the tangent vector associated to $\nabla h$ by the Riesz isomorphism).

Next we consider the normal bundle $\on{Nor}(f)$ of an immersion $f$ as a sub-bundle of $f^*T\bR^d$
whose fibers consist of all vectors that are orthogonal to the image of $f$, i.e.,
\begin{equation}
 \on{Nor}(f)_x = \big\{ Y \in T_{f(x)}\bR^d : \forall X \in T_xM : \langle Y,T_xf.X\rangle_{\bR^d}=0  \big\}.
\end{equation}
Note that any vector field $h$ along $f$ can be decomposed uniquely
into parts {\it tangential} and {\it normal} to $f$ as
$$h=Tf.h^\top + h^\bot,$$ 
where $h^\top$ is a vector field on $M$ and $h^\bot$ is a section of the normal bundle $\on{Nor}(f)$. Using the decomposition in tangential and normal parts we  have the following formula for the covariant derivative:
For $X,Y$ tangent vector fields to $M$,
\begin{equation} 
\nabla_X(Tf.Y)=Tf.(\nabla_X(Tf.Y))^\top + (\nabla_X(Tf.Y))^\bot = Tf.\nabla_X Y + S(X,Y),
\end{equation}
where $\nabla_X(Tf.Y)=\nabla_{Tf.X}(Tf.Y)$ (with $\nabla$ on the left-hand side the connection from \cite[Section 3.7]{bauer2011sobolev} and $\nabla$ on the right-hand side the standard Euclidean connection) and $S$ is the \emph{second fundamental form of $f$}.
It is a symmetric bilinear form with values in the normal bundle of $f$. 
When $Tf$ is seen as a section of $T^*M \otimes f^*T\bR^d$, one has $S=\nabla Tf$ since
\begin{equation}
S(X,Y) = \nabla_X(Tf.Y) - Tf.\nabla_X Y = (\nabla Tf)(X,Y).\end{equation}
Taking the trace of $S$ with respect to $g$ (the trace of the operator $S$ induced by $S(X,Y)=g(X,SY)$) yields the \emph{vector-valued mean curvature} \begin{equation}
   \H=\Tr^g(S) 
 \in  H^{l-2}\big(M,\on{Nor}(f)\big),
\end{equation}
whose (pointwise Euclidean) norm $|H|$ is the scalar mean curvature.

Recall, that we assumed that $M$ is equipped in addition with a fixed  (auxiliary) Riemannian metric $\ol{g}$ on $M$; fixed refers to the fact that $\ol{g}$ doe not depend on $f$. We denote the corresponding connection by $\ol\nabla$ and the volume form $\ol\vol$.
We will frequently work with  the Radon–Nikodym derivative of $\vol$ with respect to $\ol\vol$, which we will denote via
\begin{equation}
    \rho=\vol/\ol\vol\in H^{l-1}(M,\bR_{>0}).
\end{equation}
We will also need the difference between the covariant derivative of the surface metric $g$ and the background metric $\ol{g}$. To this end, we define the tensor
\begin{equation}\label{eq:Gamma}
\Gamma = \nabla - \ol\nabla.
\end{equation}
The type of this tensor depends on the type of covariant derivative (what type of tensor it differentiates). Generally, when differenting sections of a vector bundle $E$ over $M$, we have that $\Gamma$ is a section of $E^*\otimes T^*M\otimes E$.
Specific examples are as follows:
\begin{itemize}
\item
If applied to a $(0,0,0)$-tensor (a scalar), $\Gamma=0\in T^*M$, since derivatives of functions $h$ are the same with respect to all connections, $\nabla h=\ol \nabla h=Th$.
\item
If applied to a $(1,0,0)$-tensor (a tangent vector), $\Gamma$ in local coordinates equals the tensor of the difference in Christoffel symbols of the second kind, which form a $(1,2,0)$-tensor. 
\item
If applied to a $(0,1,0)$-tensor (a cotangent vector), $\Gamma$ in local coordinates again equals the tensor of the difference in Christoffel symbols of the second kind
(compared to the previous case there is a sign change due to $\nabla_j \partial_i = \Gamma_{ij}^k \partial_k$ versus $\nabla_j (dx^i) = -\Gamma^i_{jk} dx^k$ for $\Gamma_{ij}^k$ the Christoffel symbols of the second kind), which form a $(1,2,0)$-tensor.
\item
Denoting the previous two instances by $\ChristVec $ and $\ChristCovec $ (indicating $\Gamma$ acting on tangent and cotangent vectors, respectively),
$\Gamma$ applied to a $(i,j,m)$-tensor $\alpha=X^1\otimes \ldots \otimes X^i\otimes Y^1\otimes \ldots \otimes Y^j\otimes Z^1\otimes \ldots \otimes Z^m$ by the Leibniz rule reads
\begin{equation}\label{eqn:GammaTensor}
\Gamma\alpha
=\sum_{n=1}^i\alpha^n+\sum_{n=1}^j\alpha_n,
\end{equation}
where $\alpha^n$ is $\alpha$ with the factor $X^n$ replaced by $\ChristVec X^n$
and $\alpha_n$ is $\alpha$ with the factor $Y^n$ replaced by $\ChristCovec Y^n$.
Thus $\Gamma$ is a $(i+j,i+j+1,2m)$-tensor
or more precisely a section of $(TM)^{\otimes(i+j)}\otimes(T^*M)^{\otimes(i+j+1)}\otimes(\bR^d)^{\otimes 2m}$.
\end{itemize}
Christoffel symbols of the first kind are just second derivatives of the metric and thus have $H^{l-2}$-regularity;
Christoffel symbols of the second kind are obtained from those of the first kind by composition from the left with $g$ and $\ol g$, respectively,
so that they have the same regularity (recall that $g$ has regularity $H^{l-1}$, which forms an algebra).
Consequently, $\ChristVec,\ChristCovec$ are both of $H^{l-2}$-regularity, and thus so is $\Gamma$, independent of its tensor type.

{\bf The dependence on $f$ of all of the objects introduced in this section should be kept in mind in the sequel.}

\subsection{Sobolev (Semi-)Norms}\label{sec:SobolevNorms}
Next we introduce several (semi)-norms on spaces of $(i,j,m)$-tensors. In the following let $g=f^*\langle \cdot,\cdot\rangle_{\bR^d}$ as in \cref{sec:inducedGeometry}, let $l\geq 0$ and let $p\geq 1$. For sake of simplicity of the presentation we assume that $f\in \Imm^{\infty}(M,\bR^d)$; otherwise we need to restrict $l$ to be not too big depending on the regularity of the immersion $f$ and $p$.
We denote the $W^{l,p}(g)$-norm of a $(i,j,m)$-tensor field $h$ by $\|h\|_{W^{l,p}(g)}$, i.e.,
\begin{equation}
\|h\|^p_{W^{l,p}(g)}= \int_M g(h,h)^{p/2}\vol+\int_M g(\nabla^l h,\nabla^l h)^{p/2}\vol.
\end{equation}
We will also need the homogeneous semi-norm of the same order given by
\begin{equation}
\|h\|^p_{\dot W^{l,p}(g)}= \int_M g(\nabla^l h,\nabla^l h)^{p/2}\vol.
\end{equation}
For the special case $p=2$ we also write
\begin{equation}
\|h\|_{W^{l,2}(g)}=\|h\|_{H^{l}(g)},\qquad \|h\|_{\dot W^{l,2}(g)}=\|h\|_{\dot H^{l}(g)},
\end{equation}
and for $l=0$ we obtain (up to a constant) the corresponding $L^p$-norms, which we denote as $\|h\|_{L^{p}(g)}$.

We will also use the analogous norms calculated with respect to the fixed metric $\bar g$ for which we will suppress the dependence on the Riemannian metric, i.e.,
\begin{equation}
\|h\|_{W^{l,p}(\ol{g})}=\|h\|_{W^{l,p}}\;
\end{equation}
and similarly for the other norms introduced above. Finally, we note that these norms lead to the same Sobolev spaces as defined in \cref{sec:SobolevSpaces}. For the smooth Riemannian metric $\ol{g}$ the norm equivalence $|||\cdot|||_{W^{l,p}}\sim\|\cdot\|_{W^{l,p}}$ is well known, see e.g.~\cite{triebel2006theory}. For metrics of finite regularity, such as $g=f^*\langle \cdot,\cdot\rangle_{\bR^d}$ with $f\in \Imm^{k}(M,\bR^d)$, one has to restrict $l$ in terms of $k$. In the following Lemma we show this result in the case $p=2$.
\begin{lemma}[Sobolev regularity of derivatives]\label{thm:derivativeRegularity}
Let $k\geq l\geq 3$ and $f\in \Imm^k(M,\bR^d)$, $h\in T_f \Imm^k(M,\bR^d)$, then $\nabla^k h$ has $H^{k-l}$-regularity.
\end{lemma}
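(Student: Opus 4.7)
My approach is an induction on the order of covariant differentiation: I plan to establish that $\nabla^j h$ has $H^{k-j}$-regularity for every $0 \leq j \leq k$, so that the asserted statement corresponds to the single instance $j=l$. The two base cases are immediate. For $j=0$ there is nothing to prove, while for $j=1$ the covariant derivative of the vector-valued function $h$ coincides with the ordinary differential $Th$---no Christoffel symbols enter, since $h$ is a section of the trivial bundle $M\times\bR^d$---so $\nabla h \in H^{k-1}$.

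For the inductive step I would decompose
\begin{equation*}
\nabla^{j+1}h \;=\; \ol\nabla\bigl(\nabla^j h\bigr) \;+\; \Gamma\cdot\nabla^j h,
\end{equation*}
using $\Gamma = \nabla-\ol\nabla$ from~\eqref{eq:Gamma} and letting $\Gamma$ act on the $(0,j,1)$-tensor $\nabla^j h$ via the Leibniz-rule formula~\eqref{eqn:GammaTensor}. The first summand lies in $H^{k-j-1}$ because $\ol\nabla$ is a first-order linear differential operator with smooth coefficients (coming from the fixed background metric $\ol g$), hence bounded $H^{k-j}\to H^{k-j-1}$. For the second summand, the paper has already recorded that $\Gamma\in H^{k-2}$ when $f\in\Imm^k$ (since $g\in H^{k-1}$ and Christoffel symbols involve one more derivative of $g$ composed with $g^{-1}$), while $\nabla^j h\in H^{k-j}$ by the inductive hypothesis. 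The product then lies in $H^{k-j-1}$ by the Sobolev multiplication theorem on the closed surface $M$, which gives $H^a\cdot H^b\hookrightarrow H^c$ whenever $c\leq\min(a,b)$ and $a+b-c > \dim M/2 = 1$. With $a=k-2$, $b=k-j$, $c=k-j-1$ one checks $c-\min(a,b)\leq 0$ using $j\geq 1$, and $a+b-c = k-1\geq 2 > 1$ using $k\geq 3$.

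The main---rather modest---obstacle is simply keeping the regularity ledger correct in the Sobolev multiplication step, particularly at the last iterate $j=k-1\to j+1=k$, where the target is $L^2$; the inequality $a+b-c=k-1\geq 2 > 1$ continues to hold there, so the multiplication theorem applies uniformly across the induction. No additional analytic input is required beyond the standard Sobolev algebra and multiplication theorems in dimension two, together with the previously established regularity $\Gamma\in H^{k-2}$.
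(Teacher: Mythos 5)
Your proof is correct and follows essentially the same approach as the paper: induction on the differentiation order, the decomposition $\nabla = \ol\nabla + \Gamma$, and the $H^{k-2}$-regularity of $\Gamma$. The only cosmetic difference is that you invoke a single general Sobolev multiplication theorem (with exponents chosen so that $a+b-c = k-1 > \dim M/2$), whereas the paper handles the borderline case $l=k=3$ separately via the embedding $H^1 \hookrightarrow L^4$; both routes are valid and your uniform treatment is marginally tidier.
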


\begin{proof}
For $l\in\{0,1\}$ this follows from $\nabla^lh=\ol\nabla^lh$.\
For larger $l$ it follows by induction from $\nabla^lh=(\ol\nabla+\Gamma)\nabla^{l-1}h$, which lies in $H^{k-l}$ due to $\nabla^{l-1}h$ being $H^{k-l+1}$- and $\Gamma$ beging $H^{k-2}$-regular:
Indeed, for $k\geq4$ or for $k=3$ and $l=2$, their product has Sobolev-regularity $\min\{k-l+1,k-2\}\geq k-l$,
while for $l=k=3$ both functions lie in $L^4$ via Sobolev-embedding so that their product lies in $L^2$.
\end{proof}

\subsection{Riemannian Metrics on Spaces of Immersions}
In this article we are interested in completeness properties of reparametrization-invariant Riemannian metrics $G$ on the space of immersions, i.e., Riemannian metrics $G$ such that
\begin{align}
G_f(h_1,h_2)=G_{f\circ\varphi}(h_1\circ\varphi,h_2\circ\varphi)
\end{align}
for all  $f\in\Imm^l(M,\bR^d)$, $h_1,h_2\in T_f \Imm^l(M,\bR^d)$, and $\varphi\in \Diff^l(M)$.

For the proof of our main theorems we will also consider a non-invariant auxiliary Riemannian metric, namely the $H^k$-metric induced by the background metric $\ol{g}$.
\begin{definition}[Background $H^k$-Metric]\label{def:backgroundmetric}
For any $l\geq k\geq 3$ we consider the footpoint-independent, non-reparametrization-invariant $H^k$-metric on both $\Imm^l(M,\bR^d)$ and on $H^l(M,\bR^d)$ given via
\begin{align}\label{equation:backgroundmetric}
\ol G^k_f(h_1,h_2) 
&= 
\int_{M} \ol g(h_1,h_2)\ol \vol 
+
\int_{M}\ol g(\ol \nabla^k h_1,\ol\nabla^k h_2)\ol \vol,
\end{align} 
where $\ol \nabla$ and $\ol \vol$ denote the covariant derivative and volume form of $\ol{g}$.
\end{definition}

\section{Geometric Sobolev Embedding Theorems}\label{sec:SobolevEmbeddings}
In the following we will formulate several Sobolev type estimates, where we will pay careful attention to the dependence of the constants on the immersion $f$. Several of the below estimates are formulated for smooth immersions and smooth functions in the cited references, whereas we will only require the immersion to be of sufficient Sobolev regularity. We will justify this generalization at the end of this \namecref{sec:SobolevEmbeddings}, where we present an approximation result in~\cref{lem:approximation_MSSestimates}

We start by formulating the so-called Michael--Simon--Sobolev inequality.
\begin{theorem}[{Michael--Simon--Sobolev Inequality~\cite[Theorem 2.1]{michael1973sobolev}}]\label{thm:MSSineq}
Let $f\in \on{Imm}^3(M,\bR^d)$, let $g=f^*\langle\cdot,\cdot\rangle_{\bR^d}$, and let $h$ be a $W^{1,1}$-section of the $(i,j,m)$-tensor bundle. Then
\begin{align}\label{eqn:MichaelSimon}
 \|h\|_{L^2(g)}\leq C\left( \|\nabla h\|_{L^1(g)}+\||\H|h\|_{L^1(g)}\right),
\end{align}
where $\H$ denotes the mean curvature of $f$ and where $C$ is independent of $f$.
\end{theorem}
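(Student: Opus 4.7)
The plan is to prove this in the classical manner of Michael--Simon, following the outline in their 1973 paper while being careful about the low-regularity assumption $f\in\Imm^3(M,\bR^d)$. First I would reduce the tensorial statement to the scalar case. Since $h$ is a $W^{1,1}$-section, the non-negative scalar function $u=g(h,h)^{1/2}$ lies in $W^{1,1}(M)$, and Kato's inequality gives $|\nabla u|_g\leq|\nabla h|_g$ almost everywhere. Because the mean curvature weight $|\H|$ is already scalar, it suffices to prove
\[
\|u\|_{L^2(g)}\leq C\bigl(\|\nabla u\|_{L^1(g)}+\||\H|u\|_{L^1(g)}\bigr)
\]
for non-negative $u\in W^{1,1}(M)$, with a constant $C$ depending only on the ambient dimension $d$.

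Next I would establish a local monotonicity/density estimate. For each $x_0\in\bR^d$ and $\sigma>0$, set
\[
\Phi_{x_0}(\sigma)=\sigma^{-2}\int_{f^{-1}(B_\sigma(x_0))}u\,\vol.
\]
Applying the tangential divergence theorem on the immersed surface to the vector field $y\mapsto(y-x_0)u(y)$ along $f$ and using $\on{div}_M(f-x_0)=2$ together with the identity $\Delta_M|f-x_0|^2=4+2(f-x_0)\cdot\H$, one obtains, after multiplication by a cut-off and integration in $\sigma$, a monotonicity-type inequality of the form
\[
\Phi_{x_0}(\sigma_1)\leq\Phi_{x_0}(\sigma_2)+\sigma_1^{-2}\int_{f^{-1}(B_{\sigma_2}(x_0))}\bigl(|\nabla u|_g+|\H|u\bigr)\vol
\]
for all $0<\sigma_1\leq\sigma_2$. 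Letting $\sigma_1\downarrow 0$ at a Lebesgue point $y_0=f(p)$ of $u$ produces a pointwise density bound $u(y_0)\lesssim\Phi_{y_0}(\sigma)+\sigma^{-2}\int(|\nabla u|_g+|\H|u)\vol$.

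From this density bound I would deduce the required $L^2$-estimate by a Hardy--Littlewood type covering argument: applied with $u$ replaced by $(u-t)_+$ and optimized in $\sigma$ for each level $t>0$, it yields the weak-$L^2$ inequality
\[
\vol\bigl(\{p\in M:u(p)>t\}\bigr)\leq Ct^{-2}\bigl(\|\nabla u\|_{L^1(g)}+\||\H|u\|_{L^1(g)}\bigr)^{2}.
\]
Integrating via Cavalieri's formula on $u^2=\int_0^\infty 2t\,\chi_{\{u>t\}}\mathrm{d}t$ and absorbing the logarithmic divergence in the usual Marcinkiewicz-style manner (applied separately to the set where $u$ is large) then produces the strong $L^2$-bound with a constant depending only on $d$.

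Finally, to handle the low regularity $f\in\Imm^3$, I would approximate $f$ by smooth immersions $f_n\to f$ in $H^3(M,\bR^d)$; since $H^3\hookrightarrow C^1$, the induced metrics, volume forms, and mean curvature vectors converge in suitable senses, and the crucial point is that the constant $C$ produced above is intrinsically Euclidean and therefore independent of $f_n$, so the inequality passes to the limit. The step I expect to be most delicate is bookkeeping the constants in the monotonicity argument to confirm that they genuinely depend only on $d$ and not implicitly on derivatives of $f$; a secondary subtlety is justifying Kato's inequality and the divergence theorem at $W^{1,1}$/$H^3$ regularity, which is standard but needs to be stated carefully since the authors wish to invoke the estimate under exactly these hypotheses.
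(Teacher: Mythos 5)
Your proposal is correct and follows essentially the same route as the paper: reduce the tensorial statement to the scalar one via $\hat h=|h|_g$ using the Kato-type bound $|\nabla|h|_g|\leq|\nabla h|_g$, invoke the scalar Michael--Simon inequality, and pass to $H^3$-regular immersions by approximation (the paper handles the last step via \cref{lem:approximation_MSSestimates}). The only difference is that you sketch a re-derivation of the classical scalar inequality, whereas the paper simply cites \cite[Theorem 2.1]{michael1973sobolev} and does not reproduce that argument.
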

\begin{proof}
The result is a classic result for functions $\hat h\in W^{1,1}(M,\mathbb R)$, see e.g.~\cite[Theorem 2.1]{michael1973sobolev}. To see that it actually also holds in the setting of $(i,j,m)$-tensors  we let $\hat h=\sqrt{g(h,h)}=|h|_g$, and use that $\|\hat h\|_{L^p(g)}=\|h\|_{L^p(g)}$ and $\|\nabla \hat h\|_{L^p(g)}\leq\|\nabla h\|_{L^p(g)}$ due to $\nabla \hat h=g(h/|h|_g,\nabla h)$.
\end{proof}

As a direct corollary we obtain the following result bounding the $L^4$-norm in terms of the (mean-curvature-weighted) $H^1$-norm.
\begin{corollary}[$L^4$-$H^1$ Sobolev Inequality]\label{thm:L4toH1}
Let $f\in \on{Imm}^3(M,\bR^d)$, let $g=f^*\langle\cdot,\cdot\rangle_{\bR^d}$, and let $h$ be a $H^1$-section of the $(i,j,m)$-tensor bundle. 
Then
\begin{equation}
\|h\|_{L^4(g)}\leq C\left(\|h\|_{L^2(g)}+\|\sqrt{|\H|}h\|_{L^2(g)}+\|\nabla h\|_{L^2(g)} \right)
\end{equation}
where $\H$ denotes the mean curvature of $f$ and where $C$ is independent of $f$.
\end{corollary}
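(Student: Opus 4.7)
The plan is to apply the Michael--Simon--Sobolev inequality from \cref{thm:MSSineq} to the nonnegative scalar function $\tilde h = |h|_g^2 = g(h,h)$, which turns a bound on $\|\tilde h\|_{L^2(g)}$ into the desired bound on $\|h\|_{L^4(g)}^2$. First I would verify that $\tilde h \in W^{1,1}(M)$ under the hypothesis $h \in H^1$: this is immediate since, pointwise almost everywhere, $|\tilde h| \leq |h|_g^2$ is integrable (by Cauchy--Schwarz on $M$ combined with $|h|_g \in L^2$ implying $|h|_g^2 \in L^1$), and $\nabla \tilde h = 2 g(h,\nabla h)$ obeys $|\nabla \tilde h|_g \leq 2 |h|_g |\nabla h|_g$, which lies in $L^1(g)$ again by Cauchy--Schwarz.

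Next, I would substitute $\tilde h$ into \eqref{eqn:MichaelSimon}. The left-hand side becomes
\begin{equation}
\|\tilde h\|_{L^2(g)} = \left(\int_M |h|_g^4 \,\vol\right)^{1/2} = \|h\|_{L^4(g)}^2,
\end{equation}
and the two terms on the right-hand side are handled separately. For the gradient term, the pointwise bound above and Cauchy--Schwarz give
\begin{equation}
\|\nabla \tilde h\|_{L^1(g)} \leq 2 \int_M |h|_g |\nabla h|_g \,\vol \leq 2 \|h\|_{L^2(g)} \|\nabla h\|_{L^2(g)}.
\end{equation}
For the curvature term, a direct rewriting yields
\begin{equation}
\||\H| \tilde h\|_{L^1(g)} = \int_M |\H|\, |h|_g^2\,\vol = \|\sqrt{|\H|}\, h\|_{L^2(g)}^2.
\end{equation}

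Combining these bounds gives $\|h\|_{L^4(g)}^2 \leq C\bigl(2\|h\|_{L^2(g)}\|\nabla h\|_{L^2(g)} + \|\sqrt{|\H|}\,h\|_{L^2(g)}^2\bigr)$. Applying Young's inequality $2ab \leq a^2+b^2$ to the first summand and then using the elementary bound $\sqrt{a+b+c} \leq \sqrt a + \sqrt b + \sqrt c$ on nonnegative reals produces the claimed inequality with a new constant (still independent of $f$, since the MSS constant is). I do not anticipate any serious obstacle here; the only mild care needed is that $|h|_g$ is only Lipschitz in $h$ so that one works with $\tilde h = |h|_g^2$, which is genuinely $W^{1,1}$ with an explicit derivative formula, rather than applying MSS directly to $|h|_g$ (which would force us to invoke the pointwise inequality $|\nabla |h|_g|_g \leq |\nabla h|_g$ separately, as in the proof of \cref{thm:MSSineq}).
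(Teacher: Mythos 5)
Your proof is correct and follows essentially the same route as the paper's: apply the Michael--Simon--Sobolev inequality to the scalar $g(h,h)$, split the right-hand side via Cauchy--Schwarz and Young's inequality, and finish with an elementary norm inequality. The extra verification that $g(h,h)\in W^{1,1}$ with explicit derivative formula is a sensible precaution, but otherwise the argument coincides with the paper's.
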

\begin{proof}
We calculate
\begin{align}
\|h\|^4_{L^4(g)}=\|g(h,h)\|^2_{L^2(g)}
&\lesssim \left(\|\nabla g(h,h)\|_{L^1(g)}+\||\H|g(h,h)\|_{L^1(g)}\right)^2\\
&=\left(2\|g(h,\nabla h)\|_{L^1(g)}+\||\H|g(h,h)\|_{L^1(g)}\right)^2\\
&\leq \left(2\|h\|_{L^2(g)}\|\nabla h\|_{L^2(g)}+\||\H|g(h,h)\|_{L^1(g)}\right)^2\\
&=\left(2\|h\|_{L^2(g)}\|\nabla h \|_{L^2(g)} +\|\sqrt{|\H|}h\|^2_{L^2(g)}\right)^2\\
&\leq \left(\|h\|^2_{L^2(g)}+\|\nabla h\|^2_{L^2(g)}+\|\sqrt{|\H|}h\|^2_{L^2(g)}\right)^2\\
&\leq \left(\|h\|_{L^2(g)}+\|\nabla h\|_{L^2(g)}+\|\sqrt{|\H|}h\|_{L^2(g)}\right)^4,
\end{align}
where the first inequality is an application of \cref{thm:MSSineq}, the second inequality follows by the Cauchy--Schwarz inequality, the third inequality is an application of Young's inequality, and the final inequality follows from the fact that
for positive $a,b,c$ one has
$(a^2+b^2+c^2)\leq (a+b+c)^2$.
\end{proof}
Applying the Michael--Simon--Sobolev inequality from \cref{thm:MSSineq} to powers of $h$ and exploiting H\"older's inequality, one can iterate the resulting estimate to obtain the following Sobolev embedding theorem.
\begin{theorem}[{Multiplicative Sobolev Embedding Theorem~\cite[Theorem 5.6]{kuwert2002gradient}}]
\label{thm:Sobembedding}
Let $f\in \on{Imm}^3(M,\bR^d)$, let $2<p\leq\infty$ and let $h$ 
be a $W^{1,p}$-section of the $(i,j,m)$-tensor bundle. Let
$1\leq m\leq\infty$ and $0<\alpha< 1$ with 
$\tfrac1\alpha=(\tfrac12-\tfrac1p)m+1$.  
Then we have 
\begin{equation}\label{eqn:Sobembedding}
\|h\|_{L^{\infty}(g)}\leq C 
\|h\|^{1-\alpha}_{L^{m}(g)}\left(
\|\nabla h\|_{L^{p}(g)}+\||\H|h\|_{L^{p}(g)}
\right)^{\alpha},
\end{equation}
where $\H$ denotes the mean curvature of $f$ and where $C$ is independent of $f$. 
\end{theorem}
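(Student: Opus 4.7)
The plan is to reduce the inequality to a scalar one and then run a Moser-type iteration of the Michael--Simon--Sobolev inequality \cref{thm:MSSineq}, with an exponent bookkeeping that telescopes to exactly the interpolation exponent $\alpha$ dictated by scaling.

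\textbf{Reduction to scalars and the master recursion.}
I would first reduce to nonnegative scalar functions by setting $u=|h|_g\geq 0$. The pointwise Kato-type bound $|\nabla u|\leq|\nabla h|_g$ (already used in the proof of \cref{thm:MSSineq}) together with $\|u\|_{L^q(g)}=\|h\|_{L^q(g)}$ reduces the claim to the scalar estimate for $u$. For any $\beta\geq 1$, applying \cref{thm:MSSineq} to the scalar function $u^\beta$, using $|\nabla u^\beta|=\beta u^{\beta-1}|\nabla u|$, and H\"older's inequality with exponents $p$ and $p':=p/(p-1)$, yields the master recursion
\begin{equation*}
\|u\|_{L^{2\beta}(g)}^\beta\leq C\beta\,\|u\|_{L^{(\beta-1)p'}(g)}^{\beta-1}\,A,\qquad A:=\|\nabla u\|_{L^p(g)}+\||\H|u\|_{L^p(g)}.
\end{equation*}

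\textbf{The iteration and telescoping.}
I would then design the exponent sequence by $q_0:=m$, $\beta_k:=1+q_k/p'$ and $q_{k+1}:=2\beta_k=2+2q_k(p-1)/p$, so that the right-hand Lebesgue exponent at step $k$ is exactly the output exponent of step $k-1$. The hypothesis $p>2$ is precisely what makes the multiplier $2(p-1)/p>1$ and forces $q_k\to\infty$ geometrically. Taking logarithms in the master recursion and dividing by $q_{k+1}/2=\beta_k$ gives, with $\mu_k:=(q_{k+1}-2)/q_{k+1}=1-2/q_{k+1}$,
\begin{equation*}
\log\|u\|_{L^{q_{k+1}}(g)}\leq\mu_k\log\|u\|_{L^{q_k}(g)}+\tfrac{2}{q_{k+1}}\log(C\beta_k A).
\end{equation*}
Unrolling produces two crucial telescoping simplifications. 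First, using $1+q_k(p-1)/p=q_{k+1}/2$, the product $\Pi_N:=\prod_{k<N}\mu_k$ collapses to $\Pi_N=(2(p-1)/p)^N m/q_N$, and solving the linear recursion for $q_N$ explicitly gives $\mu_\infty:=\lim_N\Pi_N=\tfrac{m(p-2)}{m(p-2)+2p}$. Second, the identity $2/q_{k+1}=1-\mu_k$ forces the coefficient of $\log A$ in the accumulated sum to telescope, in the limit, to $1-\mu_\infty=\tfrac{2p}{m(p-2)+2p}$. A direct algebraic check confirms this is exactly the exponent $\alpha$ with $1/\alpha=(1/2-1/p)m+1$. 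The remaining residual $\sum_k(2/q_{k+1})\log(C\beta_k)$ converges to a finite constant because $\beta_k$ grows only linearly in $q_k$ while $q_{k+1}$ grows strictly faster than $q_k$. Exponentiating and sending $N\to\infty$ yields the stated inequality.

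\textbf{Main obstacle.}
The substantive hurdle is the algebraic verification that the two telescoping computations produce exactly the scaling-dictated exponents $1-\alpha$ and $\alpha$; once the closed form for $q_N$ is in hand the remaining estimates are routine. A subsidiary technical point is justifying the chain rule $\nabla u^\beta=\beta u^{\beta-1}\nabla u$ for arbitrary $\beta>1$ when $u$ is only $W^{1,p}$; this is handled in the standard way by replacing $u$ by $\min(u,T)+\varepsilon$, running the iteration on the truncation, and passing to the limits $T\to\infty$ and $\varepsilon\to 0^+$ by monotone convergence. An alternative, less computational strategy would be to first apply \cref{thm:MSSineq} once to $u^{s/(2-s)}$ in order to derive the intrinsic Sobolev embedding $W^{1,s}(g)\hookrightarrow L^{2s/(2-s)}(g)$ with $f$-independent constant for $1\leq s<2$, and then to invoke the standard Gagliardo--Nirenberg interpolation on the resulting Sobolev scale.
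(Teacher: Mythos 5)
Your proposal is correct and follows essentially the same route the paper indicates: the paper's preamble to \cref{thm:Sobembedding} explicitly says to apply \cref{thm:MSSineq} to powers of $h$, use H\"older's inequality, and iterate, deferring details to the cited Kuwert--Sch\"atzle reference, whose proof is precisely this Moser-type iteration with the telescoping exponent bookkeeping you carry out. The only cosmetic difference is that Kuwert--Sch\"atzle state their Theorem 5.6 with the full second fundamental form $|A|$ rather than $|\H|$, but since \cref{thm:MSSineq} already involves $|\H|$, running the iteration as you do produces the $|\H|$-version of the paper directly.
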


In our analysis we will need in addition a different variant of the the Sobolev embedding inequality, for which we first recall the following Sobolev interpolation estimate, which is due to Hamilton.
\begin{theorem}[{Sobolev Interpolation Inequality \cite[Theorem 12.1]{hamilton1982three}}]\label{thm:Sobinterpolation}
Let $f\in \on{Imm}^3(M,\bR^d)$, let $g=f^*\langle\cdot,\cdot\rangle_{\bR^d}$, let $p\geq 1$ and let $h$ be a $W^{2,p}$-section of the $(i,j,m)$-tensor bundle. Let $q,r\geq 1$ with
\begin{equation}
\frac{1}p+\frac1q=\frac1r.  
\end{equation}
Then
\begin{equation}\label{eqn:Sobinterpolation}
\|\nabla h\|^2_{L^{2r}(g)}\leq 
C \|\nabla^2 h\|_{L^{p}(g)}\| h\|_{L^{q}(g)},
\end{equation}
where the constant $C$ does not depend on the immersion $f$.
\end{theorem}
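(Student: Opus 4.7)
The plan is to follow Hamilton's classical integration-by-parts argument in the tensor-valued setting, with the main challenge being to verify that all constants can be chosen independently of the immersion $f$. I would start from the identity
\[
\|\nabla h\|_{L^{2r}(g)}^{2r} = \int_M g\bigl(\nabla h,\,|\nabla h|_g^{2r-2}\,\nabla h\bigr)\vol
\]
and apply integration by parts using the adjoint $\nabla^*$ with respect to the $L^2(g)$-inner product. Since $M$ is closed, no boundary terms appear, and the right-hand side becomes $\int_M g\bigl(h,\,\nabla^*(|\nabla h|_g^{2r-2}\nabla h)\bigr)\vol$.

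Next, expanding $\nabla^*(|\nabla h|_g^{2r-2}\nabla h)$ by the Leibniz rule produces two contributions: one involving the Bochner Laplacian $\operatorname{tr}_g\nabla^2 h$ weighted by $|\nabla h|_g^{2r-2}$, and one involving $\nabla|\nabla h|_g^{2r-2}$, which via the chain rule applied to $|\nabla h|_g^2 = g(\nabla h,\nabla h)$ is pointwise of order $|\nabla h|_g^{2r-3}|\nabla^2 h|_g$. Combining the two pointwise bounds gives
\[
\|\nabla h\|_{L^{2r}(g)}^{2r} \leq C(r) \int_M |h|_g\,|\nabla h|_g^{2r-2}\,|\nabla^2 h|_g \vol.
\]
I would then apply H\"older's inequality with exponents $q$, $r/(r-1)$, $p$, which are admissible because $\tfrac{1}{p}+\tfrac{1}{q}=\tfrac{1}{r}$ directly implies $\tfrac{1}{q}+\tfrac{r-1}{r}+\tfrac{1}{p}=1$. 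This yields
\[
\|\nabla h\|_{L^{2r}(g)}^{2r} \leq C(r)\|h\|_{L^q(g)}\|\nabla h\|_{L^{2r}(g)}^{2r-2}\|\nabla^2 h\|_{L^p(g)},
\]
and dividing by $\|\nabla h\|_{L^{2r}(g)}^{2r-2}$ (the trivial case $\nabla h\equiv 0$ being handled separately) gives the claim.

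The main obstacle --- and the only genuinely nontrivial point in our setting --- is the assertion that $C$ is independent of $f$. This holds because every step is intrinsic and coordinate-invariant: the integration-by-parts identity involves only the connection and volume of $g$ with no boundary term; the Leibniz and chain rules are universal; the pointwise trace bound $|\operatorname{tr}_g\nabla^2 h|_g \lesssim |\nabla^2 h|_g$ depends only on $\dim M=2$ and the tensor type of $h$; and H\"older's constant is universal. A standard density argument in $W^{2,p}$ handles the modest regularity of $h$, while the assumed $H^3$-regularity of $f$ (combined with \cref{thm:derivativeRegularity}) ensures that $\nabla^2 h$ is a well-defined measurable section and that the integrations by parts are justified.
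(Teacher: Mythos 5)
The paper does not prove this statement itself: it is a direct citation to Hamilton's Theorem 12.1, together with the density argument given after \cref{lem:approximation_MSSestimates} to extend it from smooth $f,h$ to the Sobolev setting. Your argument is precisely a reproduction of Hamilton's original proof by integration by parts followed by H\"older with exponents $q$, $r/(r-1)$, $p$, and it is correct; the key point you correctly identify --- that the constant is independent of $f$ --- is transparent in this proof because the Leibniz expansion of $\nabla^*(|\nabla h|_g^{2r-2}\nabla h)$ yields a constant depending only on $r$ and $\dim M$ (one obtains $C = 2r-2+n$ with $n = \dim M$), since Cauchy--Schwarz and the trace bound $|\Tr^g\nabla^2 h|_g \le \sqrt{\dim M}\,|\nabla^2h|_g$ are pointwise identities invariant under the choice of $g$. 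One small technical remark: for $1 < r < 2$ the weight $|\nabla h|_g^{2r-2}$ is not obviously weakly differentiable where $\nabla h$ vanishes, so strictly speaking one should first argue for smooth $h$ (or regularize via $(\varepsilon + |\nabla h|_g^2)^{r-1}$ and pass to the limit) and then invoke density --- which is exactly what the paper's \cref{lem:approximation_MSSestimates} furnishes, and which you correctly appeal to, so this is a presentation detail rather than a gap.
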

We are now able to formulate the main Sobolev inequality for this article. 
\begin{corollary}[Sobolev Embedding Theorem]
\label{cor:Sobembedding}
Let $f\in \on{Imm}^3(M,\bR^d)$, let $g=f^*\langle\cdot,\cdot\rangle_{\bR^d}$, let $q>1$ and let $h$ be a $W^{2,q}$-section of the $(i,j,m)$-tensor bundle. Then we have
\begin{align}\label{eq:Michael_Simon_L_infty}
\| h\|_{L^{\infty}(g)} \leq C \Vol^{\frac{q-1}{q}} \left( \|\nabla^2h\|_{L^q(g)} + \||\H|^2 h\|_{L^q(g)}
 \right),
\end{align}
where $\H$ denotes the mean curvature of $f$, $\Vol$ the total volume of $f$ and where $C$ is independent of $f$. 
\end{corollary}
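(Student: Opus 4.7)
The plan is to derive the estimate by a two-step interpolation-and-absorption argument, using \cref{thm:Sobembedding} as the main embedding into $L^\infty$ and \cref{thm:Sobinterpolation} together with H\"older to rewrite the intermediate first-order norms in terms of the desired second-order quantities; the key trick is that every intermediate estimate will carry a favorable $\|h\|_{L^\infty}^{1/2}$ factor which can be absorbed into the left-hand side at the end.

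First I would apply \cref{thm:Sobembedding} with the parameter choice $p=2q$ and $m=2q$. The hypothesis $q>1$ gives $p>2$ as required, and the relation $\tfrac1\alpha=(\tfrac12-\tfrac1p)m+1$ yields $\alpha=1/q\in(0,1)$. Together with the trivial bound $\|h\|_{L^{2q}(g)}\leq\Vol^{1/(2q)}\|h\|_{L^\infty(g)}$, the estimate \eqref{eqn:Sobembedding} becomes
\begin{equation}
\|h\|_{L^\infty(g)}
\leq C\,\Vol^{(q-1)/(2q^2)}\,\|h\|_{L^\infty(g)}^{(q-1)/q}\bigl(\|\nabla h\|_{L^{2q}(g)}+\||\H|h\|_{L^{2q}(g)}\bigr)^{1/q},
\end{equation}
which after raising both sides to the power $q$ and cancelling $\|h\|_{L^\infty(g)}^{q-1}$ reduces to
\begin{equation}
\|h\|_{L^\infty(g)}
\leq C\,\Vol^{(q-1)/(2q)}\bigl(\|\nabla h\|_{L^{2q}(g)}+\||\H|h\|_{L^{2q}(g)}\bigr).
\end{equation}

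Next I would estimate the two terms on the right. For $\|\nabla h\|_{L^{2q}(g)}$, \cref{thm:Sobinterpolation} with parameters $p=q$, $q_{\mathrm{Thm}}=\infty$, $r=q$ directly gives $\|\nabla h\|_{L^{2q}(g)}^2\lesssim \|\nabla^2h\|_{L^q(g)}\|h\|_{L^\infty(g)}$. For $\||\H|h\|_{L^{2q}(g)}$, I would use the pointwise identity $|\H|^{2q}|h|^{2q}=(|\H|^2|h|)^q\cdot|h|^q$ together with $|h|^q\leq\|h\|_{L^\infty(g)}^q$ to conclude $\||\H|h\|_{L^{2q}(g)}\leq\|h\|_{L^\infty(g)}^{1/2}\||\H|^2h\|_{L^q(g)}^{1/2}$. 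Substituting both bounds into the displayed inequality produces an overall factor $\|h\|_{L^\infty(g)}^{1/2}$ on the right-hand side, which I would absorb, leaving $\|h\|_{L^\infty(g)}^{1/2}$ on the left bounded by $\Vol^{(q-1)/(2q)}$ times $\|\nabla^2h\|_{L^q(g)}^{1/2}+\||\H|^2h\|_{L^q(g)}^{1/2}$. Squaring and using $(a^{1/2}+b^{1/2})^2\leq 2(a+b)$ gives the claim with exponent $(q-1)/q$ on $\Vol$.

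I expect no serious obstacle: all powers of $f$-dependent quantities are controlled by the $f$-independent constants in \cref{thm:Sobembedding,thm:Sobinterpolation}, so the only real bookkeeping is tracking the exponents through the absorption step. The one subtlety is that these cited estimates are stated for smooth immersions, whereas here $f\in\Imm^3(M,\bR^d)$ is only of finite regularity; this is handled by the approximation result referenced just before the theorem (\cref{lem:approximation_MSSestimates} in the introduction to this section), which allows us to pass to the limit from smooth approximating immersions and tensor fields since both sides of \eqref{eq:Michael_Simon_L_infty} depend continuously on $f$ and $h$ in the appropriate topology.
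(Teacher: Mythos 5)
Your proof is correct and follows essentially the same route as the paper: apply \cref{thm:Sobembedding} with $p=2q$, use the trivial bound $\|h\|_{L^m}\le\Vol^{1/m}\|h\|_{L^\infty}$ to absorb, and then reduce $\|\nabla h\|_{L^{2q}}$ and $\||\H|h\|_{L^{2q}}$ to second-order quantities via \cref{thm:Sobinterpolation} and the pointwise bound, followed by a final absorption. The only cosmetic difference is your parameter choice $(m,\alpha)=(2q,1/q)$ in \cref{thm:Sobembedding} versus the paper's $(2q/(q-1),1/2)$; both yield the same intermediate inequality after cancellation.
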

\begin{proof}
For the proof we follow the arguments in~\cite[Formula (4.10)]{kuwert2002gradient}: Using \cref{thm:Sobembedding} with $p=2q>2$, $\alpha=\frac12$ and thus $m=\frac{2p}{p-2}$ we obtain that
\begin{align}
\|h\|_{L^{\infty}(g)}&\leq C 
\|h\|^{\frac12}_{L^{\frac{2p}{p-2}}(g)}\left(
\|\nabla h\|_{L^{p}(g)}+\||\H|h\|_{L^{p}(g)}\right)^{\frac12}\\
&\leq C 
\|h\|^{\frac12}_{L^{\infty}(g)} \Vol^{\frac{p-2}{4p}}\left(
\|\nabla h\|_{L^{p}(g)}+\||\H|h\|_{L^{p}(g)}\right)^{\frac12}.
\end{align}
Next we divide both sides by
$\|h\|^{\frac12}_{L^{\infty}(g)}$ and square leading to
\begin{equation}\label{estimate1}
\|h\|_{L^{\infty}(g)}
\leq C^2  \Vol^{\frac{p-2}{2p}}\left(
\|\nabla h\|_{L^{p}(g)}+\||\H|h\|_{L^{p}(g)}\right)
\end{equation}
We then use for the first term the interpolation estimate from \cref{thm:Sobinterpolation} (corresponding to $r=p$ and $q=\infty$) to obtain
\begin{equation}
\|\nabla h\|_{L^{2q}(g)}\leq C_1 \|\nabla^2 h\|^{\frac12}_{L^{q}(g)}\| h\|^{\frac12}_{L^{\infty}(g)}\;.
\end{equation}
Plugging this back into~\eqref{estimate1} yields
\begin{align}
\|h\|_{L^{\infty}(g)}&\leq C_2 \Vol^{\frac{q-1}{2q}}\left(
\|\nabla^2 h\|^{\frac12}_{L^{q}(g)}\| h\|^{\frac12}_{L^{\infty}(g)}+\||\H|h\|_{L^{2q}(g)}
\right)\\
&\leq C_2 \Vol^{\frac{q-1}{2q}}\left(
\|\nabla^2 h\|^{\frac12}_{L^{q}(g)}\| h\|^{\frac12}_{L^{\infty}(g)}+\| h\|^{\frac12}_{L^{\infty}}\||\H|^2h\|^{\frac12}_{L^{q}(g)}
\right),
\end{align}
which yields the desired estimate after dividing both sides by $\| h\|^{\frac12}_{L^{\infty}}$, squaring, and applying Young's inequality.
\end{proof}

Next we will collect a rather simple estimate, that allows us to control $\dot H^q$-norms by the $H^{q'}$-norm for $q'\geq q$.
\begin{lemma}[Interpolation Inequality]\label{lem:metricdom}
Let $l,q,q'\in \bN$ with $l\geq q'\geq q$ and $l\geq 3$. Let $f\in \on{Imm}^l(M,\bR^d)$ and let $h$ be a $H^{q'}$-section of the $(i,j,m)$-tensor bundle.  Then
\begin{equation}
\| h\|^2_{\dot H^{q}(g)} \lesssim  \| h\|^2_{L^2(g)}+\| h\|^2_{\dot H^{q'}(g)}.
\end{equation}
\end{lemma}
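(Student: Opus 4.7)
The proof proceeds by reducing everything to a single log-convexity-type interpolation on the scale of homogeneous Sobolev seminorms, followed by a bookkeeping induction on $q'-q$. The base case $q=q'$ is trivial, so assume $0\leq q<q'$.

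\emph{Step 1 (Key interpolation).} I would establish that for every $1\leq q\leq q'-1$,
\begin{equation}
\|\nabla^q h\|_{L^2(g)}^2 \;\leq\; C\,\|\nabla^{q-1}h\|_{L^2(g)}\,\|\nabla^{q+1}h\|_{L^2(g)},
\end{equation}
with $C$ depending only on the dimension and tensor type (in particular independent of $f$). Since $M$ is closed, integration by parts with respect to the Riemannian volume $\vol$ gives
\begin{equation}
\int_M g(\nabla^q h,\nabla^q h)\,\vol \;=\; \int_M g\bigl(\nabla(\nabla^{q-1}h),\nabla^q h\bigr)\,\vol \;=\; -\!\int_M g\bigl(\nabla^{q-1}h,\,\nabla^{\!*}(\nabla^q h)\bigr)\vol,
\end{equation}
where $\nabla^{\!*}$ denotes the formal $L^2(g)$-adjoint. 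Because $\nabla^{\!*}(\nabla^q h)$ is just a metric contraction of $\nabla(\nabla^q h)=\nabla^{q+1}h$ with $g^{-1}$, one has the pointwise bound $|\nabla^{\!*}(\nabla^q h)|_g\lesssim|\nabla^{q+1}h|_g$, and Cauchy--Schwarz closes the estimate.

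\emph{Step 2 (Iteration).} Applying Young's inequality with a weight $\varepsilon>0$ to the interpolation of Step~1 yields
\begin{equation}
\|\nabla^q h\|_{L^2(g)}^2 \;\leq\; \varepsilon\,\|\nabla^{q+1}h\|_{L^2(g)}^2 \;+\; C_\varepsilon\,\|\nabla^{q-1}h\|_{L^2(g)}^2.
\end{equation}
I would then induct on $q'$: assuming the result for $q'-1$, use Step~1 at level $q'-1$ to bound $\|\nabla^{q'-1}h\|_{L^2}^2$ by $\|\nabla^{q'-2}h\|_{L^2}\,\|\nabla^{q'}h\|_{L^2}$, apply the inductive hypothesis to $\|\nabla^{q'-2}h\|$, and use a Young's inequality step to absorb the resulting $\|\nabla^{q'-1}h\|$ on the right. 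This produces $\|\nabla^{q'-1}h\|_{L^2}^2\lesssim\|h\|_{L^2}^2+\|\nabla^{q'}h\|_{L^2}^2$, after which the remaining intermediate derivatives are controlled directly by the inductive hypothesis.

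\emph{Main obstacle.} The only delicate point is justifying the integration by parts in the low-regularity setting: the induced metric $g=f^{\ast}\langle\cdot,\cdot\rangle_{\bR^d}$ lies in $H^{l-1}$, its Christoffel symbols (and hence $\Gamma$) in $H^{l-2}$, and $\nabla^q h$, $\nabla^{q\pm1}h$ have the Sobolev regularities provided by \cref{thm:derivativeRegularity}; under the standing assumption $3\leq q'\leq l$ and $q\leq q'-1$ this suffices for all integrands to lie in $L^1(\vol)$ and for the boundary-free divergence identity above to make sense. One verifies this rigorously by smoothly approximating $(f,h)$ in $\Imm^l(M,\bR^d)\times H^{q'}$, carrying out Step~1 in the smooth category, and passing to the limit using that the relevant products (involving $g$, $g^{-1}$, $\nabla^{q\pm1}h$ and $\rho=\vol/\ol\vol$) are continuous bilinear maps on the given Sobolev spaces by the Sobolev multiplication theorem on $M$. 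Since this is standard, I expect it to contribute no essential difficulty beyond some routine verifications.
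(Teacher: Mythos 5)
Your proposal is correct and follows essentially the same approach as the paper: integration by parts against the Bochner Laplacian, the pointwise bound relating $\nabla^{\!*}\nabla$ to $\nabla^2$, Cauchy--Schwarz, a Young's-inequality absorption step, and an induction to chain up through the orders. The only differences are cosmetic (you phrase the base estimate as a multiplicative interpolation $\|\nabla^q h\|^2\lesssim\|\nabla^{q-1}h\|\,\|\nabla^{q+1}h\|$ rather than at the fixed level $q=1,q'=2$, and there is a harmless sign in the adjoint identity), and your Step~2 absorption handles the same bookkeeping that the paper tracks via the small constants $c_q$.
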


\begin{proof}
For $q=1$ and $q'=2$ the result is obtained via
\begin{align}
&\| h\|^2_{\dot H^{1}(g)}= \int_M g(\nabla h,\nabla h)\,\vol = \int_M g( h,\nabla^*\nabla h)\,\vol
\\&\qquad\leq \sqrt{\int_M g(h,h)\,\vol}\sqrt{\int_M g(\nabla^*\nabla h,\nabla^*\nabla h)\,\vol}
\\&\qquad\lesssim \| h\|_{L^2(g)}\| h\|_{\dot H^{2}(g)}
\leq\tfrac1{2\epsilon}\| h\|^2_{L^2(g)}+\tfrac\epsilon2\| h\|^2_{\dot H^{2}(g)},
\end{align}
where the last inequality follows for arbitrary $\epsilon>0$ by Young's inequality.
The second to last inequality follows from the equality between the negative Bochner Laplacian $-\Delta=-\nabla^*\nabla$ and the connection Laplacian $\Tr^g(\nabla^2)$, which implies the pointwise inequality
\begin{equation}
g(\nabla^*\nabla h,\nabla^*\nabla h)\leq \operatorname{dim}(M) g(\nabla^2 h, \nabla^2 h)=2 g(\nabla^2 h, \nabla^2 h).
\end{equation}
By induction this implies $\| h\|^2_{\dot H^{q}(g)}\leq C_q\| h\|^2_{L^2(g)}+c_q\| h\|^2_{\dot H^{q+1}(g)}$ for arbitrary $q$, where $c_q>0$ may be chosen arbitrarily small as long as $C_q>0$ is big enough (depending on $q,c_q$).
Indeed, assume this holds for $q<Q$, then we obtain
\begin{align}
\|h\|^2_{\dot H^{Q}(g)}
&=\|\nabla h\|^2_{\dot H^{Q-1}(g)}\\
&\leq C_{Q-1}'\|\nabla h\|^2_{L^2(g)}+c_{Q-1}'\|\nabla h\|^2_{\dot H^{Q}(g)}
=C_{Q-1}'\|h\|^2_{\dot H^1(g)}+c_{Q-1}'\|h\|^2_{\dot H^{Q+1}(g)},
\end{align}
where we denoted the constants $C_{Q-1}',c_{Q-1}'$ since we will later also employ a second pair $C_{Q-1},c_{Q-1}$ of admissible constants.
Now, again inductively, we have
\begin{align}
\|h\|^2_{\dot H^1(g)}
&\leq C_1\|h\|^2_{L^2(g)}+c_1\|h\|^2_{\dot H^2(g)}
\leq C_1\|h\|^2_{L^2(g)}+c_1C_2\|h\|^2_{L^2(g)}+c_1c_2\|h\|^2_{\dot H^3(g)}\\
&\leq\ldots
\leq A\|h\|^2_{L^2(g)}+B\|h\|^2_{\dot H^Q(g)}
\qquad\text{for }
A=\sum_{j=1}^{Q-1}C_j\prod_{i=1}^{j-1}c_i,\,B=\prod_{j=1}^{Q-1}c_j.
\end{align}
Inserting into the above we get
\begin{align}
\|h\|^2_{\dot H^{Q}(g)}
\leq C_{Q-1}'A\|h\|^2_{L^2(g)}+C_{Q-1}'B\|h\|^2_{\dot H^Q(g)}+c_{Q-1}'\|h\|^2_{\dot H^{Q+1}(g)},
\end{align}
which concludes the induction step for $C_Q=C_{Q-1}'A/(1-C_{Q-1}'B)$ and $c_Q=c_{Q-1}'/(1-C_{Q-1}'B)$
(note that $B$ and $c_Q$ can be made arbitrarily small by taking the $c_{Q-1}'$, $c_j$, $j<Q$, sufficiently small).
Finally, let $q,q'>q$ arbitrary, then by the above
\begin{equation}
\| h\|^2_{\dot H^{q}(g)}
\lesssim  \| h\|^2_{L^2(g)}\!+\| h\|^2_{\dot H^{q+1}(g)}
\lesssim  \| h\|^2_{L^2(g)}\!+\| h\|^2_{\dot H^{q+2}(g)}
\lesssim \ldots
\lesssim  \| h\|^2_{L^2(g)}\!+\| h\|^2_{\dot H^{q'}(g)}
\end{equation}
as desired.
\end{proof}
Finally we present the approximation result, which justifies why we could formulate all the above results for immersions  $f$ that have merely finite Sobolev regularity.

\begin{lemma}[Convergence of Covariant Derivatives and Mean Curvature]\label{lem:approximation_MSSestimates}
Let $f_n\in\Imm^l(M,\bR^d)$ be a sequence of immersions with $f_n\to f_\infty$ as $n\to\infty$ in $H^l(M,\bR^d)$ for some immersion $f_\infty\in\Imm^l(M,\bR^d)$ and $l\geq3$.
Denote by $g_n$, $\rho_n$, $\nabla_n$, and $\H_n$ the metric, volume density, connection, and mean curvature associated with $f_n$, $n\in\bN\cup\{\infty\}$
(recall that $g_n$ and $\nabla_n$ are extended to elements and sections of tensor bundles as described in \cref{sec:inducedGeometry}). Then the following hold.
\begin{enumerate}[label=(\alph*)]
\item\label{enm:metricConvergence}
$g_n\to g_\infty$ in $H^{l-1}$ and thus in $C^{l-3,\alpha}$ for all $\alpha<1$.
\item
$\rho_n\to\rho_\infty$ in $H^{l-1}(M,\bR)$ and thus in $C^{l-3,\alpha}(M,\bR)$ for all $\alpha<1$.
\item\label{enm:connectionConvergence}
Let $h_n$, $n\in\bN\cup\{\infty\}$, be sections of the $(i,j,m)$-tensor bundle with $h_n\to h_\infty$ in $H^{k}$ for $1\leq k<l$ (in $W^{k-1,p}$ for $1<k<l$, $p\in[1,\infty)$, respectively), then $\nabla_nh_n\to\nabla_\infty h_\infty$ in $H^{k-1}$ (in $W^{k-2,p}$, respectively).
\item
$\nabla_n^l h_n\to\nabla_\infty^lh_\infty$ in $L^2$ for $h_n\to h_\infty$ in $H^l(M,\bR)$.
\item
$\nabla_n^{l-1} h_n\to\nabla_\infty^{l-1}h_\infty$ in $L^p$ for $h_n\to h_\infty$ in $W^{l-1,p}(M,\bR)$.
\item\label{enm:meanCurvature}
$\H_n\to \H_\infty$ in $H^{l-2}(M,\bR^d)$ and thus in $W^{l-3,p}(M,\bR^d)$ for all $p\in[1,\infty)$.
\end{enumerate}
\end{lemma}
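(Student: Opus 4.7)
The proof proceeds sequentially, with each part bootstrapped from the preceding ones. Every geometric quantity attached to $f_n$ is assembled from $f_n$ and its derivatives via smooth algebraic operations --- products, matrix inversion, composition with smooth scalar functions --- so that $H^l$-convergence of $f_n$ propagates through standard Sobolev multiplication on the $2$-dimensional compact manifold $M$. The main analytic ingredients are: $H^{l-1}$ is a Banach algebra for $l\geq 3$ in dimension $2$; the multiplicative embedding $H^{s_1}\cdot H^{s_2}\hookrightarrow H^{\min(s_1,s_2)}$ whenever $\max(s_1,s_2)>1$ (with the standard strict-inequality refinement at the critical value $\max(s_1,s_2)=1$); and the Sobolev embeddings $H^s\hookrightarrow L^\infty$ for $s>1$ and $H^1\hookrightarrow L^q$ for every $q<\infty$.

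\textbf{Parts \ref{enm:metricConvergence}, (b), \ref{enm:connectionConvergence}, and \ref{enm:meanCurvature}.} Part \ref{enm:metricConvergence} is immediate from $(g_n)_{ij}=\partial_if_n\cdot\partial_jf_n$ in local coordinates and the Banach-algebra property of $H^{l-1}$. For (b), the immersion property of $f_\infty$ combined with compactness of $M$ gives $\inf_M\det g_\infty>0$; by $C^0$-convergence $\det g_n\geq c>0$ uniformly for large $n$, so composition with the smooth map $\sqrt{\cdot}$ preserves $H^{l-1}$-convergence, and the same reasoning yields $g_n^{-1}\to g_\infty^{-1}$ in $H^{l-1}$. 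Since the difference tensor $\Gamma_n$ is a polynomial expression in $g_n^{-1}$, $\overline{g}^{-1}$ and $\overline{\nabla}g_n$, this gives the crucial auxiliary fact $\Gamma_n\to\Gamma_\infty$ in $H^{l-2}$. With this in hand, \ref{enm:connectionConvergence} follows from the decomposition $\nabla_n h_n=\overline{\nabla}h_n+\Gamma_n\cdot h_n$: the first summand converges in $H^{k-1}$ directly, and for the second one applies the admissible Sobolev multiplication $H^{l-2}\cdot H^k\hookrightarrow H^{k-1}$ (valid for every $k\in\{1,\ldots,l-1\}$ by the conditions above) to the splitting $\Gamma_nh_n-\Gamma_\infty h_\infty=(\Gamma_n-\Gamma_\infty)h_n+\Gamma_\infty(h_n-h_\infty)$; the $W^{k-1,p}$-variant is analogous, with the appropriate 2D embeddings of $H^{l-2}$ into $L^\infty$ ($l\geq 4$) or $L^q$, $q<\infty$ ($l=3$). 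Part \ref{enm:meanCurvature} is then a corollary: $\H_n=\Tr^{g_n}(\nabla_n Tf_n)$ contracts $g_n^{-1}\in H^{l-1}$ with $\nabla_n Tf_n\in H^{l-2}$, where the latter convergence is an instance of \ref{enm:connectionConvergence} applied to $Tf_n\to Tf_\infty$ in $H^{l-1}$, and one further Sobolev multiplication closes the argument.

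\textbf{Parts (d) and (e).} A naive iteration of \ref{enm:connectionConvergence} is insufficient, as it would yield convergence of $\nabla_n^jh_n$ only in $H^{l-j-1}$, whereas the actual regularity provided by \cref{thm:derivativeRegularity} is $H^{l-j}$. Instead, we expand $\nabla_n^jh_n$ explicitly by repeated application of $\nabla_n=\overline{\nabla}+\Gamma_n$ and the Leibniz rule as a finite sum
\begin{equation*}
\nabla_n^jh_n=\sum_\alpha c_\alpha\,\overline{\nabla}^{a_0}h_n\otimes\overline{\nabla}^{a_1}\Gamma_n\otimes\cdots\otimes\overline{\nabla}^{a_k}\Gamma_n,
\end{equation*}
summed over multi-indices $\alpha=(a_0,\ldots,a_k)$ subject to $a_0+\sum_i a_i+k=j$ and the crucial bound $a_0\geq 1$. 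This bound holds by induction because $h$ is scalar-valued, so the first application of $\nabla$ gives $\nabla h=\overline{\nabla}h$ with no Christoffel correction, and every subsequent Christoffel factor acts only on the $T^*M$-indices produced by prior $\overline{\nabla}$'s. For $j=l$ the constraints force $a_i\leq l-k-1\leq l-2$ whenever $k\geq 1$, so every factor $\overline{\nabla}^{a_i}\Gamma_n$ has nonnegative Sobolev regularity $H^{l-2-a_i}$. A case analysis then shows that in every summand at most one factor is critical (of regularity $L^2$) while the others lie in $L^\infty$ or in $L^q$ for arbitrarily large $q<\infty$; H\"older's inequality thus puts each term in $L^2$, and the standard splitting $A_nB_n-A_\infty B_\infty=(A_n-A_\infty)B_n+A_\infty(B_n-B_\infty)$ yields the desired $L^2$-convergence. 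Part (e) is handled by the same expansion with $L^p$-norms in place of $L^2$-norms.

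\textbf{Main obstacle.} The technical heart of the proof is the combinatorial bookkeeping in the expansion for (d) and (e), especially in the borderline case $l=3$ where $H^{l-2}=H^1$ embeds only into $L^q$ for $q<\infty$ (not into $L^\infty$) and several Sobolev exponents become critical simultaneously. The a-priori bound $a_0\geq 1$ is essential: without it, the expansion would contain distributional terms such as $\overline{\nabla}^{l-1}\Gamma_n\cdot h_n\in H^{-1}$ that would preclude any term-by-term $L^2$-estimate; with it, each summand contains at most one borderline factor and the Sobolev multiplication estimates close.
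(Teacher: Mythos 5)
Parts \ref{enm:metricConvergence}, (b), \ref{enm:connectionConvergence}, and \ref{enm:meanCurvature} of your argument are essentially the paper's: the Banach-algebra property of $H^{l-1}$ for the metric, the uniform bound away from singular matrices for the inverse and for $\rho_n=\sqrt{\det G_n/\det\ol G}$, the decomposition $\nabla_n=\ol\nabla+\Gamma_n$ with case analysis by Sobolev exponent for \ref{enm:connectionConvergence}, and the contraction $\H_n=\Tr^{g_n}(\nabla_n Tf_n)$ for \ref{enm:meanCurvature}. That much is fine.

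For parts (d) and (e), however, your starting claim ``a naive iteration of \ref{enm:connectionConvergence} is insufficient'' is incorrect, and the elaborate Leibniz expansion you build on it is unnecessary. You yourself note the key point: because $h$ is scalar, the \emph{first} derivative picks up no Christoffel term, $\nabla_n h_n=\ol\nabla h_n$. This gives $\nabla_n h_n\to\nabla_\infty h_\infty$ in $H^{l-1}(M,T^*M)$ directly from $h_n\to h_\infty$ in $H^l$ — one level of regularity gained ``for free,'' without invoking \ref{enm:connectionConvergence} (which is indeed barred for $k=l$). Starting from this $H^{l-1}$-convergent sequence of one-forms, one simply applies \ref{enm:connectionConvergence} with $k=l-1,l-2,\dots,1$ in turn: each application lowers the Sobolev order by exactly one, so after $l-1$ further steps one lands at $\nabla_n^l h_n\to\nabla_\infty^l h_\infty$ in $H^0=L^2$, which is (d). Part (e) is the same iteration using the $W^{k-1,p}$ variant of \ref{enm:connectionConvergence}: the free scalar step takes you to $W^{l-2,p}$, and $l-2$ iterations land at $W^{0,p}=L^p$. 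The paper's proof of each of (d) and (e) is literally this one sentence. Your ``insufficiency'' argument overlooks that the free scalar step shifts the starting point of the iteration by one notch, which is precisely what is needed.

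Your expansion
\begin{equation*}
\nabla_n^jh_n=\sum_\alpha c_\alpha\,\ol\nabla^{a_0}h_n\otimes\ol\nabla^{a_1}\Gamma_n\otimes\cdots\otimes\ol\nabla^{a_k}\Gamma_n,\qquad a_0\geq1,
\end{equation*}
with term-by-term H\"older estimates, would in fact go through (it is essentially the same bookkeeping that \ref{enm:connectionConvergence} encapsulates recursively), but it is much more laborious, and the borderline case $l=3$ requires careful exponent accounting that the iterative argument handles automatically via the $k=1$, $l=3$ branch already established in the proof of \ref{enm:connectionConvergence}. I would recommend replacing your proof of (d) and (e) by the one-step-plus-iteration argument.
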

\begin{proof}
\begin{enumerate}[label=(\alph*)]
\item
We have $Tf_n\to Tf_\infty$ in $H^{l-1}(M,\bR^d)$.
Since $H^{l-1}(M,\bR)$ is a Banach algebra for $l\geq3$, this implies $g_n=\langle Tf_n\cdot,Tf_n\cdot\rangle_{\bR^d}\to\langle Tf_\infty\cdot,Tf_\infty\cdot\rangle_{\bR^d}=g_\infty$ in $H^{l-1}$
in the sense that the coordinate representation $G_n$ converges to $G_\infty$ in $H^{l-1}$ in any chart.
By Sobolev embedding this also implies convergence in $C^{l-3,\alpha}$ for all $\alpha<1$ and thus proves the claim for the metric acting on tangent vectors.
As for the metric $g_n$ acting on cotangent vectors, note that its coordinate representation is just the matrix inverse of $G_n$.
Due to $f_\infty\in\Imm^l(M,\bR^d)$ we have that on every chart $G_\infty$ is uniformly bounded away from the set of singular matrices.
Due to $f_n\to f_\infty$ in $H^l$ (and thus in $C^1$ by Sobolev embedding), the same holds true for $G_n$, where the bound is independent of $n$.
Since matrix inversion is smooth away from singular matrices, this implies $G_n^{-1}\to G_\infty^{-1}$ in $H^{l-1}$ in any chart, which thus proves the claim for the metric acting on covectors.
Finally, since $H^{l-1}(M,\bR)$ is a Banach algebra, from the definition of the fiber metric in \cref{rem:fiberMetric} we obtain $g_n\to g_\infty$ in $H^{l-1}$ and thus $C^{l-3,\alpha}$
independent of what tensor bundle section it operates on.
\item
Let $\ol G,G_n$ be the matrix representations of $\ol g,g_n$ in a chart, then $\rho_n=\sqrt{\det G_n/\det\ol G}$ for $n\in\bN\cup\{\infty\}$.
Since $G_n\to G_\infty$ in the Banach algebra $H^{l-1}$, also $\det G_n\to\det G_\infty$ in $H^{l-1}(M,\bR)$.
By the smoothness and uniform positive definiteness of $\ol G$ on any chart we further get $\det G_n/\det\ol G\to\det G_\infty/\det\ol G$ in $H^{l-1}(M,\bR)$.
Finally, since $\det G_n>C$ for some constant $C>0$ independent of $n$ (see previous point) and the square root is smooth on $[C,\infty)$ with uniformly bounded derivatives,
we obtain the desired result.
\item
If we introduce the tensor $\Gamma_n=\nabla_n-\ol\nabla\in H^{l-2}$,
this follows from $\nabla_nh_n=(\ol\nabla+\Gamma_n)h_n$.
Indeed, $\ol\nabla h_n\to\ol\nabla h_\infty$ in $H^{k-1}$ (in $W^{k-2,p}$, respectively) by \cref{sec:SobolevNorms,lem:metricdom}.
As for $\Gamma_n h_n$, first consider the Hilbert space setting.
In the case $l\geq4$, so that $H^{l-2}$ is a Banach algebra,
$\Gamma_nh_n$ converges to $\Gamma_\infty h_\infty$ in $H^{n}$ with $n=\min\{l-2,k\}\geq k-1$ \cite[Lemma 2.3]{inci2013diffeo}, as desired.
For $l=3$ and $k=2$ we again have $\Gamma_nh_n\to\Gamma_\infty h_\infty$ in $H^1$, as desired (this time, $h_n$ lies in the Banach algebra).
Finally, for $l=3$ and $k=1$ we have $\Gamma_nh_n\to\Gamma_\infty h_\infty$ in $L^2$, as desired, since by Sobolev embedding both $\Gamma_n$ and $h_n$ converge in any $L^q$.
Next, consider the non-Hilbert space setting.
Here one can directly check $\Gamma_nh_n\to\Gamma_\infty h_\infty$ in $W^{k-2,p}$, since by Sobolev embedding $\Gamma_n\to\Gamma_\infty$ in $W^{k-2,q}$ for any $q$.
\item
This follows from the fact $\nabla_nh_n=\ol\nabla h_n\to\ol\nabla h_\infty=\nabla_\infty h_\infty$ in $H^{l-1}(M,TM)$ and the previous point.
\item
Again, this follows from $\nabla_nh_n=\ol\nabla h_n\to\ol\nabla h_\infty=\nabla_\infty h_\infty$ in $W^{l-2,p}(M,TM)$ and the previous points.
\item
Denote by $S_n=\nabla_nTf_n$ and $S_\infty=\nabla_\infty Tf_\infty$ the second fundamental form of $f_n$ and $f_\infty$, respectively.
Since $Tf_n\to Tf_\infty$ in $H^{l-1}$, we have $S_n\to S_\infty$ in $H^{l-2}$ by point \eqref{enm:connectionConvergence}.
Together with point \eqref{enm:metricConvergence} and the fact that pointwise multiplication $H^{l-1}\times H^{l-2}\to H^{l-2}$ is continuous for $l\geq3$
we obtain $\H_n=\Tr^{g_n}(S_n)\to\Tr^{g_\infty}(S_\infty)=\H_\infty$ in $H^{l-2}$.
\qedhere
\end{enumerate}
\end{proof}

As a consequence, \eqref{eqn:MichaelSimon}, \eqref{eqn:Sobembedding}, and \eqref{eqn:Sobinterpolation} indeed do not only hold for $f\in\Imm(M,\bR^d)$ and $h\in C^\infty(M,\bR)$:
Let $f_n\to f\in\Imm^3(M,\bR^d)$ in $H^3(M,\bR^d)$ with $f_n$ smooth (and associated $g_n,\rho_n,\H_n,\nabla_n$) and $h_n\to h$ in $W^{1,1}(M,\bR)$ with $h_n$ smooth (which is possible by density of smooth functions), then
\begin{equation}
\rho_nh_n^2\to\rho h^2,\quad
\rho_n\nabla_n h_n\to\rho\nabla h,\quad\text{and}\quad
\rho_n\H_nh_n\to\rho \H h
\quad\text{in }L^1
\end{equation}
by \cref{lem:approximation_MSSestimates} for $l=3$ and Sobolev embedding so that $\|h_n\|_{L^2(g_n)}\to\|h\|_{L^2(g)}$, $\|\nabla_nh_n\|_{L^1(g_n)}\to\|\nabla h\|_{L^1(g)}$, and $\||\H_n|h_n\|_{L^1(g_n)}\to\||\H|h\|_{L^1(g)}$ and thus \eqref{eqn:MichaelSimon} holds.
Likewise, if $h_n\to h$ in $W^{1,p}(M,\bR)$, by an analogous argument we obtain that all norms in \eqref{eqn:Sobembedding} converge,
and if $h_n\to h$ in $W^{2,p}(M,\bR)$, we obtain that all norms in \eqref{eqn:Sobinterpolation} converge (note that $\nabla_nh_n=\ol\nabla h_n\to\ol\nabla h=\nabla h$ in $L^{2r}$ by Sobolev embedding).


\section{Bounding Geometric Quantities on Metric Balls}\label{sec:bounds}
This section contains the main technical estimates which we will need to prove the desired completeness result. 
The basic strategy will be to prove Lipschitz continuity of several geometric functions on metric balls, which will in turn allow us to bound the corresponding quantities. 
Here we recall that a function on a Riemannian manifold is called Lipschitz continuous if it is so with respect to the Riemannian distance.

In order to prove Lipschitz continuity on metric balls, we will repeatedly use the following corollary of Gronwall's inequality (cf.\ \cite[Lemma 5.7]{bauer2023sobolev}).
\begin{lemma}[General Lipschitz Bounds]\label{lem:Gronwall}
Let $(\mathcal{M},\mathfrak{g})$ be a Riemannian manifold, possibly of infinite dimension, and let $F$ be a normed space.
Let $f:\mathcal{M}\to F$ be a $C^1$-function such that for each metric ball $B_r(y)$ in $\mathcal{M}$ there exists a constant $C$ with
\[
\|T_xf.v\|_F \le C(1+\|f(x)\|_F)\|v\|_x \qquad \text{for all } \,\,x\in B_r(y), \,\, v\in T_x\mathcal{M}.
\]
Then $f$ is Lipschitz continuous on every metric ball and in particular bounded on every metric ball.
\end{lemma}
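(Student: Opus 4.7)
The plan is to obtain Lipschitz continuity on a metric ball $B_r(y)$ by integrating the derivative estimate along near-minimizing curves, using Gronwall's inequality to first bound $\|f\|_F$ on a slightly enlarged ball.

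First, I would fix a ball $B_r(y)$ and two points $x_0, x_1 \in B_r(y)$. For any $\varepsilon>0$ there exists a piecewise-smooth curve $\gamma : [0,1] \to \mathcal{M}$ connecting $x_0$ to $x_1$ whose $\mathfrak{g}$-length $L(\gamma)$ satisfies $L(\gamma) \le \dist_{\mathfrak{g}}(x_0,x_1) + \varepsilon < 2r+\varepsilon$. By the same triangle-inequality reasoning as in the proof of \cref{thm:abstract_completeness}, every point of $\gamma$ has distance less than $2r+\varepsilon$ from $y$ (going via $x_0$ or $x_1$), so $\gamma$ stays inside the enlarged ball $B_{3r}(y)$. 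Let $C$ be the constant furnished by the hypothesis on $B_{3r}(y)$, and parameterize $\gamma$ proportional to arc length so that $\|\dot\gamma(t)\|_{\gamma(t)} = L(\gamma)$ for all $t$.

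Next, setting $\varphi(t) := \|f(\gamma(t))\|_F$, the chain rule and the hypothesis give
\begin{equation}
\varphi'(t) \le \|T_{\gamma(t)}f.\dot\gamma(t)\|_F \le C\bigl(1+\varphi(t)\bigr)\|\dot\gamma(t)\|_{\gamma(t)}
\end{equation}
almost everywhere. Gronwall's inequality then yields
\begin{equation}
1+\varphi(t) \le \bigl(1+\varphi(0)\bigr)\exp\!\bigl(C L(\gamma)\bigr) \le \bigl(1+\|f(x_0)\|_F\bigr)e^{C(2r+\varepsilon)}
\end{equation}
for all $t \in [0,1]$. In particular, taking $\varepsilon \to 0$ and fixing any base point $x_0 \in B_r(y)$, we obtain a uniform bound $\|f(x)\|_F \le K$ for all $x \in B_r(y)$, where $K$ depends only on $r$, $C$, $\|f(x_0)\|_F$; this also shows $f$ is bounded on the larger ball $B_{3r}(y)$ by enlarging the initial ball, which gives a uniform constant $K'$ along every such curve $\gamma$.

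Finally, with this $L^\infty$-bound in hand, the hypothesis on $B_{3r}(y)$ upgrades to a pure Lipschitz-type derivative bound $\|T_{\gamma(t)}f.\dot\gamma(t)\|_F \le C(1+K')\|\dot\gamma(t)\|_{\gamma(t)}$, and integrating along $\gamma$ yields
\begin{equation}
\|f(x_1)-f(x_0)\|_F \le \int_0^1 \|T_{\gamma(t)}f.\dot\gamma(t)\|_F \,\mathrm{d}t \le C(1+K')\,L(\gamma).
\end{equation}
Taking the infimum over all admissible $\gamma$ (equivalently, letting $\varepsilon \to 0$) gives $\|f(x_1)-f(x_0)\|_F \le C(1+K')\dist_{\mathfrak{g}}(x_0,x_1)$, which is the claimed Lipschitz estimate on $B_r(y)$. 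No step is truly hard here; the only mild subtlety is to ensure the connecting curve stays in a ball on which the hypothesis applies, which is handled by passing to $B_{3r}(y)$ exactly as in the metric-completeness argument preceding the lemma.
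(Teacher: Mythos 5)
The paper does not actually prove \cref{lem:Gronwall}; it simply cites \cite[Lemma~5.7]{bauer2023sobolev}, so there is no in-paper proof to compare against. Your argument is the standard one and is correct: integrate the pointwise derivative bound along near-minimizing curves, using the triangle-inequality trick to keep such curves inside the dilated ball $B_{3r}(y)$ (exactly as the paper does in the proof of \cref{thm:abstract_completeness}), apply Gronwall to get an a priori $L^\infty$ bound on $f$ along those curves, and then integrate again to get Lipschitz continuity. Two small remarks. First, the inequality $\varphi'(t)\le\|T_{\gamma(t)}f.\dot\gamma(t)\|_F$ a.e. should be justified by noting that $t\mapsto f(\gamma(t))$ is $C^1$ into $F$ and the norm is $1$-Lipschitz, so $\varphi$ is locally Lipschitz and hence absolutely continuous with $|\varphi'|\le\|(f\circ\gamma)'\|_F$ wherever it is differentiable; this works in any normed space $F$ because $\varphi$ is real-valued. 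Second, the parenthetical claim that ``$f$ is bounded on the larger ball $B_{3r}(y)$'' is not actually needed and is slightly stronger than what you established (you only have a bound along the specific curves $\gamma$ connecting points of $B_r(y)$); the clean statement is that your Gronwall estimate gives $\|f(\gamma(t))\|_F\le(1+K)e^{C(2r+\varepsilon)}$ uniformly over all the curves $\gamma$ you use, which suffices for the final integration. With these cosmetic fixes the proof is complete.
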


The first result concerns the boundedness of the total volume, which requires the mildest conditions for the metric $G$.
\begin{lemma}[Volume Bound]
\label{sqrtvol}
Let $l\geq 3$ and let $G$ be a smooth Riemannian metric on 
$\Imm^l(M,\bR^d)$
such that
\[
\|h\|_{G_f}
 \gtrsim
\|\nabla h\|_{L^{2}(g)}.
\]
Then the function
\begin{equation}
\sqrt{\Vol}:\left( \Imm^l(M,\bR^d),G\right)\to \bR
\end{equation}
is Lipschitz continuous and thus uniformly bounded on any $G$-metric ball in $\Imm^l(M,\bR^d)$.
\end{lemma}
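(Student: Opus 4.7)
The plan is to apply the general Lipschitz criterion of \cref{lem:Gronwall} to $f\mapsto \sqrt{\Vol(f)}$ viewed as a $C^1$-function from $(\Imm^l(M,\bR^d),G)$ to $\bR$. All that is needed is a pointwise (in $f$) bound of $|T_f\sqrt{\Vol}.h|$ in terms of $\|h\|_{G_f}$, and by the hypothesis $\|h\|_{G_f}\gtrsim \|\nabla h\|_{L^2(g)}$ it suffices to control the differential by $\|\nabla h\|_{L^2(g)}$.

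The main calculation is the first variation of the volume form. Working in a local chart with $g_{ij}=\langle \partial_i f,\partial_j f\rangle_{\bR^d}$, Jacobi's formula gives
\begin{equation}
D_f\vol(h)
=\tfrac12\Tr\!\bigl(g^{-1}D_fg(h)\bigr)\vol
= g^{ij}\langle \partial_i f,\partial_j h\rangle_{\bR^d}\,\vol
= g(Tf,\nabla h)\,\vol,
\end{equation}
where $g$ on the right denotes the induced fiber metric on $T^*M\otimes\bR^d$ (and for the vector-valued function $h$ one has $\nabla h=Th$ since $\bR^d$ carries the trivial connection, cf.~\cref{sec:inducedGeometry}).

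Next I apply Cauchy--Schwarz twice. Pointwise,
\begin{equation}
|g(Tf,\nabla h)|\leq |Tf|_g\,|\nabla h|_g=\sqrt{2}\,|\nabla h|_g,
\end{equation}
since in a $g$-orthonormal basis $e_1,e_2$ of $T_xM$ one has $|Tf|_g^2=\sum_i|Tf.e_i|_{\bR^d}^2=\sum_ig(e_i,e_i)=\dim M=2$. Integrating and applying Cauchy--Schwarz again in $L^2(\vol)$ yields
\begin{equation}
|D_f\Vol(h)|\leq\sqrt2\int_M|\nabla h|_g\,\vol\leq\sqrt{2}\,\sqrt{\Vol(f)}\,\|\nabla h\|_{L^2(g)}.
\end{equation}
The chain rule then gives
\begin{equation}
|D_f\sqrt{\Vol}(h)|=\frac{|D_f\Vol(h)|}{2\sqrt{\Vol(f)}}\leq\frac{1}{\sqrt{2}}\|\nabla h\|_{L^2(g)}\lesssim \|h\|_{G_f},
\end{equation}
a bound that is uniform in $f$ (the $\sqrt{\Vol}$ factors cancel, which is the whole point of taking the square root). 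In particular the hypothesis of \cref{lem:Gronwall} holds with a constant independent of the ball, so $\sqrt{\Vol}$ is Lipschitz on every $G$-metric ball; since such balls have finite $G$-diameter, $\sqrt{\Vol}$ is uniformly bounded on each of them.

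There is no real obstacle here: the argument is a direct first-variation computation, and the only mild technicality is verifying smoothness of $f\mapsto\sqrt{\Vol(f)}$ on $\Imm^l(M,\bR^d)$ for $l\geq 3$, which follows from $g$ being of regularity $H^{l-1}$ (an algebra, giving smooth dependence of $\det g$ on $f$) together with $\det g$ being bounded away from zero on an immersion. Note also that in this step one does not actually need the full strength of $\cref{lem:Gronwall}$ — one could equivalently integrate the pointwise bound along any smooth path in $(\Imm^l(M,\bR^d),G)$ to deduce the Lipschitz property directly, since the derivative estimate does not depend on $f$.
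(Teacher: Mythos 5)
Your proof is correct and follows essentially the same route as the paper: first variation of $\Vol$, the identity $|Tf|_g^2=2$, Cauchy--Schwarz, and \cref{lem:Gronwall}. The only cosmetic differences are that you derive the first-variation formula rather than citing \cite{bauer2012almost}, and you observe (correctly) that since the derivative bound is uniform in $f$ one could bypass \cref{lem:Gronwall} entirely by integrating along paths.
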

\begin{proof}
Let $f \in H^1((0,1),\Imm^l(M,\bR^d))$ be a path of immersions with velocity $\dot f$. The first variation of the total volume is given by
\[
\partial_t \Vol = \int_M g(Tf,\nabla \dot f) \vol,
\]
see e.g.~\cite[Section 4]{bauer2012almost}.
Using that
\begin{equation}\label{eqn:normMetric}
|Tf|_{g}^2=\Tr(g^{-1}g)=\dim(M)=2
\end{equation}
we obtain
\begin{align}
\left|\partial_t \sqrt{\Vol}\right|
&=\frac{1}{2 \sqrt{\Vol}}
\left| \int_M g(Tf,\nabla \dot f) \vol\right|\\&
 \leq \frac{1 }{2 \sqrt{\Vol}} \left( \int_M g(\nabla \dot f,\nabla \dot f)\vol \right)^{1/2} \left( \int_M 2\vol\right)^{1/2}\\&=
  \frac{1 }{ \sqrt{2}} \left( \int_M g(\nabla \dot f,\nabla \dot f)\vol \right)^{1/2}
  \leq \frac{1 }{ \sqrt{2}}\|\dot f\|_{G_f},
\end{align}
which concludes the proof via \cref{lem:Gronwall}.
\end{proof}
We note that this result only gives us a control of the volume from above, but not from below. This is not surprising as we would expect to  need  a higher order or a scale invariant metric to control the volume also from below. It turns out that assuming control of the $L^{\infty}$-norm of $\nabla h$ allows us to control several important quantities, which we will study in the next \namecref{lem:Linftyassumption}.
\begin{lemma}[Surface Metric Bound]
\label{lem:Linftyassumption}
Let $l\geq 3$ and let $G$ be a smooth Riemannian metric on 
$\Imm^l(M,\bR^d)$
such that
\[
\|h\|_{G_f}
 \gtrsim
\|\nabla h\|_{L^{\infty}(g)}.
\]
Then the functions
\begin{align}
g &:\left( \Imm^l(M,\bR^d),G\right)\to \left(H^{l-1}(M,T^*M\otimes T^*M), \|\cdot\|_{L^\infty(\ol{g})}\right), \\
g^{-1} &:\left( \Imm^l(M,\bR^d),G\right)\to \left(H^{l-1}(M,TM\otimes TM), \|\cdot\|_{L^\infty(\ol{g})}\right)
\end{align}
are Lipschitz continuous and thus uniformly bounded on any $G$-metric ball in the space $\Imm^l(M,\bR^d)$.
By definition, the same holds true for the induced fiber metric.
\end{lemma}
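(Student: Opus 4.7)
The plan is to apply the General Lipschitz Bounds \cref{lem:Gronwall} to each of $g$ and $g^{-1}$, regarded as $C^1$-maps from $(\Imm^l(M,\bR^d),G)$ into the Banach space of $(0,2)$-- and $(2,0)$--tensor fields equipped with $\|\cdot\|_{L^\infty(\ol g)}$. It then suffices to establish, along any smooth path $f_t$ with velocity $\dot f$, the linear differential bounds
\[
\|\partial_t g\|_{L^\infty(\ol g)}\lesssim \|g\|_{L^\infty(\ol g)}\|\dot f\|_{G_f}, \qquad \|\partial_t g^{-1}\|_{L^\infty(\ol g)}\lesssim \|g^{-1}\|_{L^\infty(\ol g)}\|\dot f\|_{G_f}.
\]

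The first step is to compute the first variations. Differentiating $g=f^*\langle\cdot,\cdot\rangle_{\bR^d}$ gives
\[
\partial_t g(X,Y) = \langle \nabla_X\dot f,Tf.Y\rangle_{\bR^d} + \langle Tf.X,\nabla_Y\dot f\rangle_{\bR^d},
\]
and the matrix-inverse identity yields $\partial_t g^{-1} = -g^{-1}(\partial_t g)g^{-1}$. Cauchy--Schwarz in $\bR^d$, the identities $|Tf|_g = \sqrt{2}$ and $|g^{-1}|_g = \sqrt{2}$ coming from \eqref{eqn:normMetric} and \cref{rem:metricBundleHom}, and the submultiplicativity of the $g$-fiber norm under composition (\cref{rem:metricBundleHom}) together yield the pointwise bounds
\[
|\partial_t g|_g \lesssim |\nabla\dot f|_g, \qquad |\partial_t g^{-1}|_g \leq |g^{-1}|_g^2\,|\partial_t g|_g \lesssim |\nabla\dot f|_g.
\]

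Next I would pass from the $g$-fiber norms to the $\ol g$-fiber norms. The pointwise inequality $g(X,X)\leq |g|_{\ol g}\,\ol g(X,X)$ for $X\in TM$ follows by Cauchy--Schwarz in a $\ol g$-orthonormal frame, and inverting gives $\ol g^{-1}\leq |g|_{\ol g}\,g^{-1}$ on $T^*M$; a symmetric argument produces $\ol g\leq |g^{-1}|_{\ol g}\,g$. Tensorizing these through every index slot and using \cref{rem:fiberMetric}, one obtains for $(0,2)$-tensors $\alpha$ and $(2,0)$-tensors $\beta$ that
\[
|\alpha|_{\ol g}\leq |g|_{\ol g}\,|\alpha|_g, \qquad |\beta|_{\ol g}\leq |g^{-1}|_{\ol g}\,|\beta|_g.
\]
Applying these to $\partial_t g$ and $\partial_t g^{-1}$, combining with the standing hypothesis $\|\nabla\dot f\|_{L^\infty(g)}\lesssim \|\dot f\|_{G_f}$ and taking the supremum over $M$ produces the displayed linear bounds, completing the verification of \cref{lem:Gronwall}. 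The final assertion for induced fiber metrics on general tensor bundles then follows from \cref{rem:fiberMetric}: such fiber metrics are tensor products of $g$ and $g^{-1}$, and uniform $L^\infty(\ol g)$-boundedness together with Lipschitz continuity are preserved under such algebraic operations.

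The most delicate aspect is the bookkeeping in the third step: the hypothesis on $G$ controls only the $g$-norm of $\nabla\dot f$, while the target norm is measured against $\ol g$, and the conversion factor is precisely the very quantity $\|g\|_{L^\infty(\ol g)}$ (respectively $\|g^{-1}\|_{L^\infty(\ol g)}$) whose growth we wish to bound. The fact that $|Tf|_g$ and $|g^{-1}|_g$ are $f$-independent constants keeps the resulting differential inequality genuinely linear and decouples $\|g\|_{L^\infty(\ol g)}$ from $\|g^{-1}\|_{L^\infty(\ol g)}$, so that each can be controlled by a single application of \cref{lem:Gronwall}.
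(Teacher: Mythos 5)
Your proposal is correct and follows essentially the same route as the paper: compute the first variations $\partial_t g$ and $\partial_t g^{-1}$, obtain a linear bound of the form $\|\partial_t g\|_{L^\infty(\ol g)}\lesssim \|g\|_{L^\infty(\ol g)}\|\nabla\dot f\|_{L^\infty(g)}$ (and analogously for $g^{-1}$) using the $f$-independent identities $|Tf|_g=\sqrt 2$, $|g^{-1}|_g=\sqrt 2$ together with the $g$-to-$\ol g$ norm-conversion estimate $|\alpha|_{\ol g}\leq|g|_{\ol g}|\alpha|_g$ (resp.\ $|\beta|_{\ol g}\leq|g^{-1}|_{\ol g}|\beta|_g$), and close with \Cref{lem:Gronwall}. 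The only slight difference is presentational: you first bound everything in the intrinsic $g$-fiber norm and then convert to $\ol g$ via a generic change-of-metric estimate, whereas the paper computes $|\partial_t g^{-1}|_{\ol g}$ in one algebraic pass; the resulting differential inequalities are the same.
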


\begin{remark}[Immersion Property and Equivalence of $L^{p}(g)$ and $L^{p}(\ol{g})$ Norms]\label{rem:volumedens}
\cref{lem:Linftyassumption} immediately implies that the volume density $\rho=\vol/\overline{\vol}$ is uniformly bounded from below and above on $G$-metric balls, i.e., for any $G$-metric ball $B_r(f_0)$ of radius $r>0$ centered at $f_0\in\Imm^l(M,\bR^d)$ there exist constants $C_1,C_2>0$ such that
\begin{equation}
C_1\leq\rho\leq C_2\qquad\text{almost everywhere on $M$.}
\end{equation}
This automatically guarantees that the immersion property is preserved on $G$-metric balls. Furthermore, this allows us to freely pass from $L^{p}(g)$ norms to $L^p(\ol{g})$-norms (and vice versa) in all the following estimates.
\end{remark}

\begin{proof}[Proof of \cref{lem:Linftyassumption}]
To prove this result we start by noting the ``change of norms formulas'' (cf.\ \cref{rem:metricBundleHom})
\[
|\ol{g}|_g^2 = \Tr(g^{-1} \ol{g} g^{-1} \ol{g}) = \Tr(\ol{g} g^{-1} \ol{g} g^{-1}) = |g^{-1}|_{\ol{g}}^2,
\]
and similarly when we swap $g$ and $\ol{g}$.
For $B \in L(TM,T^*M)$ we have with \cref{rem:metricBundleHom}
\begin{equation}\label{eq:norm_change}
\begin{split}
|B|_{\ol{g}}^2 
	&= \Tr(\ol{g}^{-1} B^* \ol{g}^{-1} B) = \Tr(\ol{g}^{-1} g g^{-1} B^* \ol{g}^{-1} g g^{-1} B) \\
	&=\Tr( g (g^{-1} B^* \ol{g}^{-1}) g (g^{-1} B \ol{g}^{-1})) = g((g^{-1} B^* \ol{g}^{-1})^*, g^{-1} B \ol{g}^{-1}) \\
	&\le |g^{-1} B^* \ol{g}^{-1}|_{g}|g^{-1} B \ol{g}^{-1}|_{g} \le |g^{-1} B^*|_{g}|g^{-1} B|_{g} |\ol{g}^{-1}|_g^2 \\
	&= |g^{-1} B^*|_{g}|g^{-1} B|_{g} |g|_{\ol{g}}^2 = |B|_{g}^2 |g|_{\ol{g}}^2,
\end{split}
\end{equation}
where in the last equality we used, for $A=g^{-1} B\in \End(TM)$, the equality
\[
	|A|_g^2 = \Tr (g^{-1} A^* g A) = \Tr(g^{-1} B g^{-1} B) = |B|_g^2.
\]

Now let $f \in H^1((0,1),\Imm^l(M,\bR^d))$ be again a path of immersions with velocity $\dot f$ and choose $B=2\sym \langle\nabla \dot f,Tf\rangle_{\bR^d}$ and $A= g^{-1} B$, where $\langle\cdot,\cdot\rangle_{\bR^d}$ refers to the Euclidean inner product in the codomain of $Tf,\nabla\dot f:TM\to\bR^d$.
We have, using \eqref{eqn:normMetric},
\[
|A|_g = |B|_g = 2|\sym \langle\nabla \dot f,Tf\rangle_{\bR^d}|_g \le 2|\langle\nabla \dot f,Tf\rangle_{\bR^d}|_g \le 2|Tf|_g |\nabla \dot f|_g = \sqrt{8}|\nabla \dot f|_g.
\]

We now recall the following first variations formulas from the proofs of \cite[Section 4]{bauer2012almost},
\begin{equation}\label{eq:variationmetric}
\begin{split}
\pl_t g &= 2\sym \langle\nabla \dot f,Tf\rangle_{\bR^d} = B,\\
\pl_t g^{-1} &= -g^{-1} (\pl_t g) g^{-1}.
\end{split}
\end{equation}
We thus have (cf.\ \cref{rem:metricBundleHom})
\[
\begin{split}
|\pl_t g|_{\ol g} &= |B|_{\ol g} \le |B|_{g} |g|_{\ol{g}} = \sqrt{8}|\nabla \dot f|_g  |g|_{\ol{g}},\\
|\pl_t g^{-1}|_{\ol g}^2 
	& = |g^{-1} B g^{-1}|_{\ol g}^2
  = \Tr(\ol{g} A g^{-1} \ol{g} A g^{-1}) \\
	&= \Tr(g^{-1} (\ol{g} A) g^{-1} (\ol{g} A)) = |\ol{g} A|_g^2\\
  &\le |\ol{g}|_g^2 |A|_g^2 =  |g^{-1}|_{\ol g}^2 |A|_g^2
	=  8|\nabla \dot f|_g^2 |g^{-1}|_{\ol g}^2.
\end{split}
\]
From this the result follows via \cref{lem:Gronwall}.
\end{proof}
Next we aim to estimate derivatives of the metric, i.e.\ we will consider
the tensors $\Gamma$ as defined in~\eqref{eq:Gamma}.
In the next \namecref{lem:W2pplusLinftyassumption} we will bound these quantities and in addition consider bounds on the second fundamental form $S$.
\begin{lemma}[Christoffel Symbol and Second Fundamental Form Bounds]
\label{lem:W2pplusLinftyassumption}
Let $l\geq 3$ and let $G$ be a smooth Riemannian metric on 
$\Imm^l(M,\bR^d)$
such that
\[
\|h\|_{G_f}
 \gtrsim
\| \nabla^2 h \|_{L^p(g)} + \|\nabla h\|_{L^{\infty}(g)}.
\]
for some $p\in [1,\infty]$. Then the functions
\begin{align}
S &:\left( \Imm^l(M,\bR^d),G\right)\to \left(H^{l-2}(M,T^*M^{\otimes 2}\otimes\bR^d), \|\cdot\|_{L^p(\ol{g})}\right),\\
\H &:\left( \Imm^l(M,\bR^d),G\right)\to \left(H^{l-2}(M,\bR^d), \|\cdot\|_{L^p(\ol{g})}\right),\\
\Christ{*} &: \left( \Imm^l(M,\bR^d),G\right)\to \left(H^{l-2}(M, T^*M^{\otimes 2} \otimes TM), \|\cdot\|_{L^p(\ol{g})}\right),\qquad *\in\{\Vec,\Covec\},
\end{align}
are Lipschitz continuous and consequently uniformly bounded on any $G$-metric ball in $\Imm^l(M,\bR^d)$.
By \eqref{eqn:GammaTensor}, the analogous estimates hold for $\Gamma$ of all types.
\end{lemma}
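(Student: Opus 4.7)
The plan is to apply the Gronwall-type bound of \cref{lem:Gronwall} to each of $S$, $\H$, and $\Gamma$ individually along smooth paths of immersions. By \cref{lem:Linftyassumption} the fiber metrics $g$ and $g^{-1}$ are uniformly bounded in $L^\infty(\ol{g})$ on any $G$-metric ball, and by \cref{rem:volumedens} so is the volume density $\rho$; we may therefore freely pass between $L^p(g)$ and $L^p(\ol{g})$ norms up to multiplicative constants that depend only on the ball. It thus suffices, for a smooth path $f_t\in\Imm^l(M,\bR^d)$ with velocity $\dot f_t$, to produce variational estimates of the form
\[
\|\pl_t F\|_{L^p(g)} \lesssim \bigl(1 + \|F\|_{L^p(g)}\bigr)\|\dot f_t\|_{G_{f_t}}
\]
for each of the three tensor fields $F\in\{S,\H,\Gamma\}$.

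For the second fundamental form $S=\nabla Tf$, the classical formula for the variation of the Levi-Civita connection under a varying metric gives
\[
\pl_t S = \nabla^2\dot f + (\pl_t\Gamma)\cdot Tf,
\]
where $\pl_t\Gamma$ is a tensor of schematic shape $g^{-1}\cdot\nabla(\pl_t g)$. Substituting $\pl_t g = 2\sym\langle\nabla\dot f, Tf\rangle_{\bR^d}$ from \eqref{eq:variationmetric} and using $\nabla Tf = S$, we obtain
\[
\pl_t\Gamma \;\sim\; g^{-1}\cdot\bigl(\nabla^2\dot f\cdot Tf + \nabla\dot f\cdot S\bigr).
\]
The submultiplicativity recorded in \cref{rem:metricBundleHom}, together with $|Tf|_g^2=2$ and the uniform bound on $|g^{-1}|_g$ from \cref{lem:Linftyassumption}, then yields pointwise
\[
|\pl_t S|_g \lesssim |\nabla^2\dot f|_g + |\nabla\dot f|_g\,|S|_g,
\]
and H\"older's inequality combined with the hypothesis on $G$ produces the bound
\[
\|\pl_t S\|_{L^p(g)} \lesssim \|\nabla^2\dot f\|_{L^p(g)} + \|\nabla\dot f\|_{L^\infty(g)}\|S\|_{L^p(g)} \lesssim \|\dot f_t\|_{G_{f_t}}\bigl(1+\|S\|_{L^p(g)}\bigr).
\]
\cref{lem:Gronwall} now gives Lipschitz continuity of $S$ as a map into $L^p(\ol{g})$ on every $G$-metric ball.

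For $\H=\Tr^g(S)$, differentiation yields $\pl_t\H = (\pl_t g^{-1})\cdot S + g^{-1}\cdot\pl_t S$; the first summand is bounded in $L^p$ using the estimate $|\pl_t g^{-1}|_g\lesssim|\nabla\dot f|_g$ derived in the proof of \cref{lem:Linftyassumption}, combined with the Lipschitz bound on $S$ just established, while the second is immediate from the bound on $\pl_t S$. For $\Gamma=\nabla-\ol\nabla$ acting on tangent or cotangent vectors, $\pl_t\Gamma$ coincides with the quantity already estimated above, and the extension of the estimate to $\Gamma$ acting on arbitrary $(i,j,m)$-tensors follows from the Leibniz-type decomposition \eqref{eqn:GammaTensor}.

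The main technical nuisance is the careful bookkeeping of tensor types when invoking the schematic identity $\pl_t\Gamma\sim g^{-1}\cdot\nabla(\pl_t g)$ and verifying that all intermediate contractions admit the desired pointwise $g$-norm bounds; this is handled uniformly by \cref{rem:metricBundleHom}. A secondary point is that the variational identities must be justified at finite Sobolev regularity, which is achieved by approximation using \cref{lem:approximation_MSSestimates} to reduce to the classical smooth setting.
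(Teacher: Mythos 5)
Your proposal follows essentially the same route as the paper: compute $\partial_t S = \nabla^2\dot f + (\partial_t\Gamma)Tf$, reduce $\partial_t\Gamma$ to $g^{-1}\cdot\nabla(\partial_t g)$ via the variation-of-connection formula, substitute $\partial_t g = 2\sym\langle\nabla\dot f,Tf\rangle_{\bR^d}$, obtain the closed bound $\|\partial_t S\|_{L^p(g)}\lesssim(1+\|S\|_{L^p(g)})\|\dot f\|_{G_f}$, and invoke \cref{lem:Gronwall}; the $\Gamma$ bound then falls out of the same intermediate estimate once $S$ is controlled. The only minor deviation is that you differentiate $\H$ directly, whereas the paper simply composes the Lipschitz maps $S$ and $g^{-1}$ using $\H=\Tr(g^{-1}S)$, but both routes are valid and lead to the same conclusion.
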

\begin{proof}
The assumption $\|h\|_{G_f} \gtrsim \|\nabla h\|_{L^{\infty}(g)}$ implies by \cref{lem:Linftyassumption} and \cref{rem:volumedens} that the $L^p(g)$ and $L^p(\ol{g})$ norms are uniformly equivalent on metric balls, hence in the following we will use the $L^p(g)$ norm in order to expedite the calculations.

Let $f \in H^1((0,1),\Imm^l(M,\bR^d))$ be again a path of immersions with velocity $\dot f$.
Since $S=\nabla Tf$ and $\ol{\nabla}$ is independent of $t$, a direct calculation shows that
\[
\pl_t S=\pl_t\nabla Tf=\pl_t(\ol\nabla+\ChristCovec)T f=\nabla^2\pl_tf+\pl_t\ChristCovec(T f),
\]
and thus, using \eqref{eqn:normMetric}, we have
\[
\begin{split}
|\pl_t S|_{g} \le |\nabla^2 \dot f|_g + \sqrt{2}|\pl_t\ChristCovec|_{g}
\end{split}
\]
Taking the $p$th power and integrating, we obtain
\begin{equation}\label{eqn:pl_tS_estimate_1}
\|\pl_t S\|_{L^p(\ol{g})} \lesssim  \| \nabla^2 \dot f \|_{L^p(g)} + \|\pl_t \ChristCovec||_{L^p(g)}.
\end{equation}

We now evaluate $\|\pl_t \ChristCovec||_{L^p(g)}$.
Since $\ol\nabla$ is independent of the immersion $f$ (and consequently of time $t$) we have that $\partial_t \ChristCovec =\partial_t\nabla$ (note that while $\nabla$ is a differential operator of order $1$, its time derivative is tensorial, so the equality makes sense).
The following (implicit) formula for the   variation of the covariant derivative has been derived in~\cite[Lemma 5.8]{bauer2011sobolev},
\begin{equation}\label{eq:dt_nabla}
g((\partial_t\nabla)(X,Y),Z)=\tfrac12(\nabla\pl_t g)\left(X\otimes Y\otimes Z+Y\otimes X\otimes Z-Z\otimes X\otimes Y\right).
\end{equation}
Here $X,Y$ and $Z$ are vector fields on $M$, and we write $\nabla(X,Y)=\nabla_XY$.
Using the variation formula for the metric~\eqref{eq:variationmetric} we obtain
\begin{align}\label{eq:nablaptg}
 \nabla \pl_t  g&= \nabla 2\sym \langle\nabla \dot f,Tf\rangle_{\bR^d} =2\sym \langle\nabla^2 \dot f,Tf\rangle_{\bR^d} +
2\sym \langle\nabla \dot f,\nabla Tf\rangle_{\bR^d}\\
&=2\sym \langle\nabla^2 \dot f,Tf\rangle_{\bR^d} +
2\sym \inner{\nabla \dot f}{S}.
\end{align}
We thus have
\begin{align}
 \|\nabla \pl_t  g\|_{L^p(g)}&\leq 2\left\|\sym \inner{\nabla^2 \dot f}{Tf}\right\|_{L^p(g)} +
2\left\|\sym \inner{\nabla \dot f}{S}\right\|_{L^p(g)} \\
    &\overset{\eqref{eqn:normMetric}}{\le}2^{3/2}\|\nabla^2 \dot f\|_{L^p(g)} + 2\|\nabla \dot f\|_{L^{\infty}(g)}\|S\|_{L^p(g)}.
\end{align}
Using \eqref{eq:dt_nabla} and the fact that $g^{-1}$ is uniformly bounded on metric balls, we obtain
\begin{align}
\label{eqn:variationGammaBound}
\|\pl_t\ChristCovec \|_{L^p(g)}\lesssim \|\nabla \pl_t  g\|_{L^p(g)}&\lesssim\|\nabla^2\dot f\|_{L^p( g)}+\|\nabla\dot f\|_{L^\infty(g)} \|S\|_{L^p(g)}.
 \end{align}
Combining with \eqref{eqn:pl_tS_estimate_1} we therefore obtain, using our assumptions on $G_f$, that
\[
\|\pl_tS\|_{L^p(g)} \lesssim \|\nabla^2\dot f\|_{L^p( g)}+\|\nabla\dot f\|_{L^\infty(g)} \|S\|_{L^p(g)} \lesssim (1+ \|S\|_{L^p(g)})\|\dot f\|_{G_f},
\]
and thus, using again \cref{rem:volumedens},
\[
\|\pl_tS\|_{L^p(\ol{g})} \lesssim (1+ \|S\|_{L^p(\ol{g})})\|\dot f\|_{G_f},
\]
which implies the result for $S$ using \cref{lem:Gronwall}.
The result for $\ChristCovec$ follows similarly from \eqref{eqn:variationGammaBound}, using the boundedness of $\|S\|_{L^p(g)}$ on metric balls.
The result for $\ChristVec$ is analogous.

Finally, the Lipschitzness of $\H$ follows from the Lipschitzness of $S$ and of $g^{-1}$ (which follows by \cref{lem:Linftyassumption}), as $\H = \Tr(g^{-1}S)$.
\end{proof}
In the case of $H^3$-immersions we need one more estimate which will allow us to control the highest order derivatives appearing in the metric.
\begin{lemma}[Christoffel Symbol and Second Fundamental Form Derivative Bounds]
\label{lem:H3plusW24plusLinftyassumption}
Let $l\geq 3$ and let $G$ be a smooth Riemannian metric on
$\Imm^l(M,\bR^d)$
such that
\[
\|h\|_{G_f}
 \gtrsim
\| \nabla^3 h \|_{L^2(g)} + \| \nabla^2 h \|_{L^4(g)}+ \|\nabla h\|_{L^{\infty}(g)}.
\]
Then the functions
\begin{align}
\ol\nabla S &:\left( \Imm^l(M,\bR^d),G\right)\to \left(H^{l-3}(M,T^*M^{\otimes 3}\otimes\bR^d), \|\cdot\|_{L^2(\ol{g})}\right),\\
\ol\nabla\Christ{*} &:\left( \Imm^l(M,\bR^d),G\right)\to \left(H^{l-3}(M, T^*M^{\otimes 3}\otimes TM),  \|\cdot\|_{L^2(\ol{g})}\right),\qquad *\in\{\Vec,\Covec\},
\end{align}
are Lipschitz continuous and thus uniformly bounded on any $G$-metric ball in $\Imm^l(M,\bR^d)$.
By \eqref{eqn:GammaTensor}, the analogous estimates hold for $\Gamma$ of all types.
\end{lemma}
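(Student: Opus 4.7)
The plan is to mirror the strategy of \cref{lem:W2pplusLinftyassumption} by differentiating along a path and applying \cref{lem:Gronwall}. Let $f\in H^1((0,1),\Imm^l(M,\bR^d))$ be a path of immersions with velocity $\dot f$. Since $\ol\nabla$ is independent of $t$, the operators $\pl_t$ and $\ol\nabla$ commute, so $\pl_t(\ol\nabla S)=\ol\nabla(\pl_tS)$ and $\pl_t(\ol\nabla\ChristCovec)=\ol\nabla(\pl_t\ChristCovec)$, and analogously for $\ChristVec$. I would then reuse the formulas from the previous proof: $\pl_tS=\nabla^2\dot f+\pl_t\ChristCovec(Tf)$ and $\pl_t\ChristCovec$ is, via \eqref{eq:dt_nabla} and \eqref{eq:nablaptg}, a contraction of $g^{-1}$ with $2\sym\inner{\nabla^2\dot f}{Tf}+2\sym\inner{\nabla\dot f}{S}$.

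Applying one further derivative $\ol\nabla=\nabla-\Gamma$ to these expressions and using the Leibniz rule produces a sum of products built from $\nabla^3\dot f$, $\nabla^2\dot f$, $\nabla\dot f$ and the footpoint-dependent tensors $Tf$, $g$, $g^{-1}$, $S$, $\Gamma$, $\nabla S$, $\nabla\Gamma$, $\nabla g^{-1}$. I would then estimate these products in $L^2(\ol g)\sim L^2(g)$ (using \cref{rem:volumedens}) via H\"older's inequality, distributing exponents as follows: the top-order factor $\nabla^3\dot f$ is paired only with $L^\infty$ factors ($Tf$, $g^{-1}$, controlled by \cref{lem:Linftyassumption} and \eqref{eqn:normMetric}); every factor involving $\nabla^2\dot f$ is paired with an $L^4$ factor ($S$ or $\Gamma$, controlled on metric balls by \cref{lem:W2pplusLinftyassumption} applied with $p=4$, whose hypothesis is implied by the current, stronger assumption); and every $\nabla\dot f$ factor, estimated in $L^\infty$, is paired with an $L^2$ factor ($\nabla S$, $\nabla\Gamma$, $\nabla g^{-1}$). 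Any occurrence of $\nabla S$ or $\nabla\Gamma$ is rewritten as $\nabla S=\ol\nabla S+\Gamma\cdot S$ and $\nabla\Gamma=\ol\nabla\Gamma+\Gamma\cdot\Gamma$, producing only the target quantities $\|\ol\nabla S\|_{L^2(\ol g)}$, $\|\ol\nabla\Gamma\|_{L^2(\ol g)}$ together with $L^4\times L^4$ products already bounded on metric balls.

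Combining everything yields, on every $G$-metric ball, an estimate of the form
\[
\|\pl_t(\ol\nabla S)\|_{L^2(\ol g)}+\|\pl_t(\ol\nabla\Gamma)\|_{L^2(\ol g)}\lesssim \bigl(1+\|\ol\nabla S\|_{L^2(\ol g)}+\|\ol\nabla\Gamma\|_{L^2(\ol g)}\bigr)\,\|\dot f\|_{G_f},
\]
from which Lipschitz continuity and uniform boundedness on metric balls follow directly from \cref{lem:Gronwall}. The estimate for $\ol\nabla\ChristVec$ is entirely analogous, and the corresponding bounds for $\Gamma$ of arbitrary tensor type are immediate from \eqref{eqn:GammaTensor}. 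I expect the main obstacle to be purely bookkeeping: one must enumerate a large number of product terms arising from the nested applications of $\ol\nabla=\nabla-\Gamma$ and the Leibniz rule, and verify that the three velocity bounds $L^2$, $L^4$, $L^\infty$ available from the hypothesis exactly match the available footpoint bounds on $S$, $\Gamma$, $g$, $g^{-1}$; the geometric input beyond the preceding lemmas is minimal.
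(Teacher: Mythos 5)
Your proposal follows essentially the same route as the paper: differentiate $\ol\nabla S$ and $\ol\nabla\Gamma$ along a path using the commutation $\pl_t\ol\nabla=\ol\nabla\pl_t$, expand via $\ol\nabla=\nabla-\Gamma$ and the Leibniz rule, match the velocity factors $\nabla^3\dot f,\nabla^2\dot f,\nabla\dot f$ (in $L^2,L^4,L^\infty$ respectively) against the footpoint factors $Tf,g^{-1},S,\Gamma$ (in $L^\infty$ or $L^4$, via \cref{lem:Linftyassumption,lem:W2pplusLinftyassumption}) and $\ol\nabla S,\ol\nabla\Gamma$ in $L^2$, rewrite $\nabla S,\nabla\Gamma$ in terms of $\ol\nabla S,\ol\nabla\Gamma$ plus $L^4\times L^4$ products, and close with \cref{lem:Gronwall}; this is exactly the paper's strategy. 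The only cosmetic differences are that the paper first closes Gronwall for $\ol\nabla S$ alone and then for $\ol\nabla\ChristCovec$ (which is possible because the bound for $\pl_t\ol\nabla\ChristCovec$ involves only $\ol\nabla S$, not $\ol\nabla\Gamma$) rather than using a coupled system as you suggest, and that the paper spells out the tensoriality argument ($R_1=R_2$) justifying why differentiating the implicit formula \eqref{eq:dt_nabla} produces exactly $\|\nabla\pl_t\nabla\|_{L^2}\sim\|\nabla\nabla\pl_t g\|_{L^2}$, a step you handle implicitly by working at the tensor level. Also note that with $\nabla$ the Levi-Civita connection of $g$, the factor $\nabla g^{-1}$ you list vanishes identically; what can appear is $\ol\nabla g^{-1}=-\Gamma g^{-1}$, which is harmless.
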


\begin{proof}
Let $f \in H^1((0,1),\Imm^l(M,\bR^d))$ be a path of immersions with velocity $\dot f$. 
Following the proof of \cref{lem:W2pplusLinftyassumption} we have
\begin{align}
\pl_t \ol\nabla S
&=\ol\nabla \pl_t S
=\ol\nabla \nabla^2 \dot f +\ol{\nabla}(\pl_t \Gamma Tf)
=\nabla^3 \dot f-\Gamma\nabla^2\dot f + (\pl_t \ol{\nabla}\Gamma)Tf + \pl_t \Gamma\ol\nabla Tf\\
&=\nabla^3 \dot f-\Gamma\nabla^2\dot f + (\pl_t \ol{\nabla}\Gamma)Tf + \pl_t \Gamma S - \pl_t \Gamma\, \Gamma\, Tf,
\end{align}
where, for the sake of simplicity, we omitted the type of $\Gamma$ as well as how it contracts with the tensor it acts on.
Hence we estimate, using \eqref{eqn:normMetric} and H\"older's inequality,
\begin{align}
\|\pl_t \ol\nabla S\|_{L^2(\ol g)}
    & \leq \|\nabla^3 \dot f\|_{L^2(g)}+\|\Gamma\|_{L^4(g)}\|\nabla^2\dot f\|_{L^4(g)}  +   \|\pl_t \Gamma\|_{L^4(g)} \|S\|_{L^4(g)}\\
    &\quad+\sqrt{2} \|\pl_t \Gamma\|_{L^4(g)}\|\Gamma\|_{L^4(g)} + \sqrt{2}\|\pl_t \ol{\nabla}\ChristCovec\|_{L^2(g)}.
    \end{align}
Using our assumptions on $G_f$,  \cref{lem:W2pplusLinftyassumption} (which implies uniform boundedness of $\|\Gamma\|_{L^4(\ol g)}$ and $\|S\|_{L^4(\ol g)}$ on metric balls), estimate \eqref{eqn:variationGammaBound} and the equivalence of the $\g$ and $g$ norms, we obtain that, uniformly on any metric ball,
\begin{align}\label{eq:pl_t_nabla_S_estimate_1}
    \|\pl_t \ol\nabla S\|_{L^2(g)} \lesssim \|\dot f\|_{G_f} + \|\pl_t \ol{\nabla}\ChristCovec\|_{L^2(g)}.
\end{align}

We now estimate $\|\pl_t \ol{\nabla}\ChristCovec\|_{L^2(g)}$.
Using again that $\ol\nabla$ is independent of the immersion $f$ (and consequently of time $t$) we have
\begin{equation}\label{eq:pl_t_nabla_Gamma_1}
   \partial_t \ol\nabla\ChristCovec =\ol\nabla\pl_t \ChristCovec =\ol\nabla\partial_t\nabla=\nabla\partial_t\nabla-\Gamma\partial_t\nabla.
\end{equation}
Using \eqref{eqn:variationGammaBound} and \cref{lem:W2pplusLinftyassumption} (which implies uniform boundedness of $\|\Gamma\|_{L^4(\ol g)}$ on the metric ball), the last term can be bounded via
\begin{equation}\label{eq:pl_t_nabla_Gamma_2}
\|\Gamma\partial_t\nabla\|_{L^2( g)}
\leq\|\Gamma\|_{L^4( g)}\|\partial_t\nabla\|_{L^4( g)}
=\|\Gamma\|_{L^4( g)}\|\partial_t\ChristCovec \|_{L^4( g)}
\lesssim\|\dot f\|_{G_f}.
\end{equation}
For the other term, $\nabla\partial_t\nabla$, we apply $\nabla$ on both sides of \eqref{eq:dt_nabla} to obtain
\begin{align}
&g((\nabla\partial_t\nabla)(X,Y),Z)
+R_1\\
&=\nabla[g((\partial_t\nabla)(X,Y),Z)]
=\nabla[\tfrac12(\nabla\pl_t g)\left(X\otimes Y\otimes Z+Y\otimes X\otimes Z-Z\otimes X\otimes Y\right)]\\
&=\tfrac12(\nabla\nabla\pl_t g)\left(X\otimes Y\otimes Z+Y\otimes X\otimes Z-Z\otimes X\otimes Y\right)+R_2,
\end{align}
where $R_1,R_2$ are summands depending linearly and nontrivially on $(\nabla X,\nabla Y,\nabla Z)$.
As $\nabla\partial_t\nabla$ is tensorial, we must have $R_1=R_2$
(which one can also verify by explicit calculation using again \eqref{eq:dt_nabla}).
Since $g^{-1}$ is uniformly Lipschitz in $L^\infty(\g)$ on metric balls (by \cref{lem:Linftyassumption} and the assumption that $\|h\|_{G_f} \gtrsim  \|\nabla h\|_{L^{\infty}(g)}$),
we obtain
 \begin{equation}\label{eq:nablaGamma_terms}
     \|\nabla\partial_t\nabla\|_{L^2(g)}\sim\|\nabla\nabla\partial_t g\|_{L^2(g)}.
 \end{equation}
Using equation~\eqref{eq:nablaptg} we calculate
 \begin{align}
\nabla\nabla \pl_t  g&=\nabla\left(2\sym \inner{\nabla^2 \dot f}{Tf} +
2\sym \inner{\nabla \dot f}{S}\right)\\
&=2\sym \inner{\nabla^3 \dot f}{Tf} +4\sym \inner{\nabla^2 \dot f}{S}+2\sym \inner{ \nabla \dot f}{\nabla S}.
\end{align}
We estimate the $L^2$-norm of each of the three terms separately.
For the first term we calculate, using \eqref{eqn:normMetric},
\begin{align}
&\|2\sym \inner{\nabla^3 \dot f}{Tf}\|_{L^2( g)}\lesssim
\|\nabla^3 \dot f\|_{L^2(g)}.
\end{align}
The second term can be estimated as
\begin{align}
&\|4\sym \inner{\nabla^2 \dot f}{S}\|_{L^2( g)}\lesssim
\|\nabla^2 \dot f\|_{L^4( g)}\|S\|_{L^4( g)}\lesssim
\|\nabla^2 \dot f\|_{L^4(g)}
\end{align}
using the uniform boundedness of $\|S\|_{L^4(\ol g)}$ on metric balls from \cref{lem:W2pplusLinftyassumption}.
For the third term we calculate
\begin{align}
&\|2\sym \inner{ \nabla \dot f}{\nabla S}\|_{L^2( g)}\leq
2\|\nabla \dot f\|_{L^{\infty}(g)}\|\nabla S \|_{L^2( g)}\\
&\qquad
= 2\|\nabla \dot f\|_{L^{\infty}(g)}\| \ol \nabla S +\Gamma S\|_{L^2( g)}\\
&\qquad\leq
2\|\nabla \dot f\|_{L^{\infty}( g)}\left(\| \ol \nabla S\|_{L^2( g)} +\|\Gamma S\|_{L^2( g)}\right)\\&\qquad
\leq 2\|\nabla \dot f\|_{L^{\infty}( g)}\left(\| \ol \nabla S\|_{L^2( g)} +\|\Gamma\|_{L^4( g)} \|S\|_{L^4( g)}\right) \\
    &\qquad\lesssim (1+ \| \ol \nabla S\|_{L^2( g)})\|\nabla \dot f\|_{L^{\infty}( g)}.
\end{align}
Combining these three estimates with \eqref{eq:pl_t_nabla_Gamma_1}, \eqref{eq:pl_t_nabla_Gamma_2} and \eqref{eq:nablaGamma_terms}, we obtain
\begin{equation}\label{eq:pl_t_nabla_Gamma_3}
    \|\partial_t\ol\nabla\ChristCovec\|_{L^2(g)} \lesssim (1+ \| \ol \nabla S\|_{L^2( g)})\|\dot f\|_{G_f} ,
\end{equation}
which, together with \eqref{eq:pl_t_nabla_S_estimate_1},
implies 
\begin{equation}
    \|\partial_t\ol\nabla S\|_{L^2(g)} \lesssim (1+ \| \ol \nabla S\|_{L^2( g)})\|\dot f\|_{G_f} ,
\end{equation}
hence the result for $\ol\nabla S$ follows from \cref{lem:Gronwall}.
Once we know that $\ol\nabla S$ is uniformly bounded on metric balls, \eqref{eq:pl_t_nabla_Gamma_3} and \cref{lem:Gronwall} imply the result for $\ChristCovec$, from which the results for other types of $\Gamma$ follow.
\end{proof}
The above estimates are sufficient to prove completeness for the mean-curvature-weighted $H^3$-metrics. To obtain the result for higher order metrics, we need additional estimates on higher order derivatives of the metric and of $\Gamma$, which we present in the next two lemmas.

\begin{lemma}[Metric Controls Lipschitz Constants of Derivatives]
\label{lem:HkplusW2inftyplusLinftyassumption2}
Let $l\geq k>3$ and let $G$ be a smooth Riemannian metric on 
$\Imm^l(M,\bR^d)$
such that
\[
\|h\|_{G_f}
 \gtrsim
\| \nabla^k h \|_{L^2(g)} + \| \nabla^2 h \|_{L^\infty(g)}+ \|\nabla h\|_{L^{\infty}(g)}.
\]
Then we have, uniformly on every metric ball,
that
\begin{align}
\|h\|_{G_f}
 &\gtrsim
\| \nabla^i h \|_{L^2(g)}
&&\text{for }1\leq i\leq k,\\
\|h\|_{G_f}
 &\gtrsim
\| \nabla^i h \|_{L^\infty(g)}
&&\text{for }1\leq i\leq k-2.
\end{align}
\end{lemma}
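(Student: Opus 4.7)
The plan is to deduce both families of bounds from the Sobolev-type estimates of \cref{sec:SobolevEmbeddings}, together with the control on geometric quantities that is already available from \cref{sqrtvol}, \cref{lem:Linftyassumption} and \cref{lem:W2pplusLinftyassumption}. Throughout, I fix a $G$-metric ball $B$ and allow implicit constants in $\lesssim$ to depend on $B$ but not on its points. The hypothesis on $G_f$ is in particular stronger than that of \cref{lem:W2pplusLinftyassumption} with $p=\infty$, so that result applies and gives a uniform $L^\infty(g)$-bound for $\H$ on $B$; \cref{sqrtvol} likewise provides a uniform upper bound on $\Vol$.

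For the $L^2$ family the endpoint cases are immediate: the case $i=k$ is part of the hypothesis, while for $i=1$ one estimates
\[
\|\nabla h\|_{L^2(g)} \leq \sqrt{\Vol}\,\|\nabla h\|_{L^\infty(g)} \lesssim \|h\|_{G_f}.
\]
For the intermediate range $1<i<k$, I would apply the interpolation inequality \cref{lem:metricdom} to the tensor field $\nabla h$ (which lies in $H^{k-1}$ since $h\in H^l$ and $l\geq k$) with exponents $q=i-1$ and $q'=k-1$. This yields
\[
\|\nabla^i h\|_{L^2(g)}^2 \;=\; \|\nabla h\|_{\dot H^{i-1}(g)}^2 \;\lesssim\; \|\nabla h\|_{L^2(g)}^2 + \|\nabla^k h\|_{L^2(g)}^2 \;\lesssim\; \|h\|_{G_f}^2,
\]
where in the last step I reuse the $i=1$ bound and the defining hypothesis.

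For the $L^\infty$ family the cases $i=1,2$ again come directly from the hypothesis on $G_f$. For $3\leq i\leq k-2$, the main step is to invoke the intrinsic Sobolev embedding \cref{cor:Sobembedding} with $q=2$, applied to the tensor field $\nabla^i h$ (which lies in $H^{l-i}\subset H^2$ thanks to $l\geq k\geq i+2$):
\[
\|\nabla^i h\|_{L^\infty(g)} \;\lesssim\; \Vol^{1/2}\Bigl(\|\nabla^{i+2} h\|_{L^2(g)} + \bigl\| |\H|^2 \nabla^i h \bigr\|_{L^2(g)}\Bigr).
\]
The first term is controlled by $\|h\|_{G_f}$ by the $L^2$ bounds just established, since $i+2\leq k$; the second term is dominated by $\|\H\|_{L^\infty(g)}^2\,\|\nabla^i h\|_{L^2(g)}$, and both factors are already uniformly bounded on $B$.

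The argument is essentially an assembly of previously established results rather than a genuinely new estimate, so I do not expect a serious technical obstacle. The only point requiring attention is verifying the regularity hypotheses of \cref{cor:Sobembedding} and \cref{lem:metricdom} at each application, but these are met throughout since $h\in H^l(M,\bR^d)$ with $l\geq k$ and all orders of differentiation that appear remain bounded by $k$.
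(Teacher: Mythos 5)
Your proof is correct and takes essentially the same route as the paper: both establish the $L^2$ family by applying \cref{lem:metricdom} (the interpolation inequality) together with the volume bound from \cref{sqrtvol}, and the $L^\infty$ family by applying \cref{cor:Sobembedding} with $q=2$ to $\nabla^i h$ after controlling $\Vol$ and $|\H|$ via \cref{sqrtvol} and \cref{lem:W2pplusLinftyassumption}. Your version is even slightly more explicit than the paper's about how \cref{lem:metricdom} is invoked (applied to $\nabla h$ with shifted exponents $q=i-1$, $q'=k-1$), which avoids the need for a separate control on $\|h\|_{L^2(g)}$ that is not directly available from the hypothesis.
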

\begin{proof}
By \cref{lem:Linftyassumption} we have that the control on $ \|\nabla h\|_{L^{\infty}(g)}$ implies, in any metric ball, control on $ \|\nabla h\|_{L^{2}(g)}$.
Thus, by \cref{lem:metricdom} we have, uniformly on a metric ball, $\|h\|_{G_f}\gtrsim\|\nabla^jh\|_{L^2(g)}$ for $1\leq j\leq k$.
Furthermore, by \cref{sqrtvol,lem:W2pplusLinftyassumption} we have that $\Vol$ and $|\H|$ are uniformly controlled on metric balls.
The claim now follows from \cref{cor:Sobembedding} for $q=2$, applied to $\nabla^i h$.
\end{proof}

\begin{lemma}[Christoffel Symbol and Second Fundamental Form Higher Derivative Bounds]
\label{lem:HkplusW2inftyplusLinftyassumption}
Let $l\geq k>3$ and let $G$ be a smooth Riemannian metric on 
$\Imm^l(M,\bR^d)$
such that
\begin{align}
\|h\|_{G_f}
 \gtrsim
\| \nabla^k h \|_{L^2(g)} + \| \nabla^2 h \|_{L^{\infty}(g)}+ \|\nabla h\|_{L^{\infty}(g)}.
\end{align}
Then, for $i\in\{k-3,k-2\}$ and $*\in\{\Vec,\Covec\}$ the functions
\begin{align}
\nabla^i S &:\left( \Imm^l(M,\bR^d),G\right)\to \left(H^{l-i-2}(M,T^*M^{\otimes(i+2)}\otimes\bR^d), \|\cdot\|_{L^2(\ol{g})}\right),\\
\nabla^i \Christ{*} &:\left( \Imm^l(M,\bR^d),G\right)\to \left(H^{l-i-2}(M,T^*M^{\otimes(i+2)}\otimes TM), \|\cdot\|_{L^2(\ol{g})}\right)
\end{align}
and, for $i\leq k-4$ and $*\in\{\Vec,\Covec\}$ the functions
\begin{align}
\nabla^i S &:\left( \Imm^l(M,\bR^d),G\right)\to \left(H^{l-i-2}(M,T^*M^{\otimes(i+2)}\otimes\bR^d), \|\cdot\|_{L^\infty(\ol{g})}\right),\\
\nabla^i\Christ{*} &:\left( \Imm^l(M,\bR^d),G\right)\to \left(H^{l-i-2}(M,T^*M^{\otimes(i+2)}\otimes TM), \|\cdot\|_{L^\infty(\ol{g})}\right)
\end{align}
are Lipschitz continuous and thus uniformly bounded (from above) on any $G$-metric ball in $\Imm^l(M,\bR^d)$.
By \eqref{eqn:GammaTensor}, the analogous estimates hold for $\Gamma$ of all types.
\end{lemma}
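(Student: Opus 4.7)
The proof proposal is to proceed by induction on $i$, extending the strategy of \cref{lem:W2pplusLinftyassumption,lem:H3plusW24plusLinftyassumption2}. Fix a smooth path $f\in H^1((0,1),\Imm^l(M,\bR^d))$ of immersions with velocity $\dot f$. As before, for each of the target quantities $Q \in \{\nabla^i S,\nabla^i\ChristCovec,\nabla^i\ChristVec\}$ the plan is to derive a pointwise estimate of the form
\[
\|\partial_t Q\|_{L^2(\ol g)} \lesssim \bigl(1+\|Q\|_{L^2(\ol g)} + (\text{already controlled quantities})\bigr)\,\|\dot f\|_{G_f},
\]
so that \cref{lem:Gronwall} produces Lipschitz continuity (hence uniform bounds) on any $G$-metric ball. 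Lemmas~\ref{sqrtvol}--\ref{lem:HkplusW2inftyplusLinftyassumption2} already yield uniform bounds on $\Vol,|\H|,g,g^{-1},\rho$, so we are free to switch between $L^p(g)$ and $L^p(\ol g)$ throughout.

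The key new ingredient is a commutator identity. Since $\ol\nabla$ is $t$-independent and $\nabla=\ol\nabla+\Gamma$, for any time-dependent tensor field $\alpha$ one has $[\partial_t,\nabla]\alpha=(\partial_t\Gamma)\,\alpha$ (up to the usual contractions dictated by \eqref{eqn:GammaTensor}), and iterating,
\[
\partial_t(\nabla^i\alpha)=\nabla^i(\partial_t\alpha)+\sum_{j=0}^{i-1}\nabla^{i-1-j}\bigl((\partial_t\Gamma)\,\nabla^j\alpha\bigr).
\]
Applied to $\alpha=S$, together with $\partial_tS=\nabla^2\dot f+(\partial_t\Gamma)Tf$ from \cref{lem:W2pplusLinftyassumption}, this yields
\[
\partial_t(\nabla^iS)
=\nabla^{i+2}\dot f+\nabla^i\bigl((\partial_t\Gamma)\,Tf\bigr)+\sum_{j=0}^{i-1}\nabla^{i-1-j}\bigl((\partial_t\Gamma)\,\nabla^jS\bigr),
\]
and an analogous formula holds for $\partial_t(\nabla^i\Gamma)$, where, via \eqref{eq:dt_nabla}--\eqref{eq:nablaptg}, every appearance of $\partial_t\nabla\Gamma^{(\leq j)}$ is expressed in terms of $\nabla^{a}\dot f$ with $a\leq i+2$ and of $\nabla^{b}S$ with $b\leq i$. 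After expanding every $\nabla$ of a product by the Leibniz rule, the right-hand sides become finite sums of products of factors of the form $\nabla^{a}\ChristCovec$, $\nabla^{a}\ChristVec$, $\nabla^{b}S$, $\nabla^{c}\dot f$ with total derivative count at most $i+2$.

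Each such product is then estimated in $L^2(\ol g)$ by placing exactly one factor (the one with the highest derivative count) in $L^2$ and the remaining factors in $L^\infty$ via Hölder's inequality. The top-order factor involving $\dot f$ is bounded by $\|\dot f\|_{G_f}$ via \cref{lem:HkplusW2inftyplusLinftyassumption2}, which uniformly controls $\|\nabla^a\dot f\|_{L^2(g)}$ for $a\leq k$ and $\|\nabla^a\dot f\|_{L^\infty(g)}$ for $a\leq k-2$. The inductive hypothesis supplies the $L^2(\ol g)$ bound on factors $\nabla^aS,\nabla^a\Gamma$ with $a\leq i$, and the desired $L^\infty(\ol g)$ bound on the lower-order factors $\nabla^aS,\nabla^a\Gamma$ with $a\leq i-2$ is obtained as a \emph{consequence} of the previously established $L^2$ bounds on $\nabla^{a+2}S,\nabla^{a+2}\Gamma$ by applying \cref{cor:Sobembedding} ($q=2$) to $\nabla^aS,\nabla^a\Gamma$, using that $|\H|$ and $\Vol$ are uniformly bounded on metric balls. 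This way the inductive $L^2$-bounds are pushed up one derivative at a time, starting from the base cases $i\in\{0,1\}$ furnished by \cref{lem:W2pplusLinftyassumption,lem:H3plusW24plusLinftyassumption2}, all the way to $i=k-2$; the $L^\infty$-bounds for $i\leq k-4$ then follow directly from the $L^2$-bounds at level $i+2\leq k-2$ via \cref{cor:Sobembedding}.

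The main obstacle will be the bookkeeping at the two borderline levels $i\in\{k-3,k-2\}$, where only an $L^2(\ol g)$-bound is available for the intermediate factors $\nabla^{k-3}S,\nabla^{k-2}S$ and analogous terms for $\Gamma$, so that some Leibniz summands apparently have two factors requiring $L^2$ control. These mixed summands have to be rearranged so that one of the two factors carries a derivative count at most $k-4$ and can be estimated in $L^\infty$ by the inductive hypothesis, or alternatively handled via the multiplicative embedding \cref{thm:Sobembedding} and the interpolation \cref{thm:Sobinterpolation} to redistribute one derivative, obtaining $L^4$-type bounds as in the proof of \cref{lem:H3plusW24plusLinftyassumption2}. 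Once these boundary cases are treated, \cref{lem:Gronwall} closes the induction and yields both the $L^2$ and the $L^\infty$ Lipschitz statements in a single pass.
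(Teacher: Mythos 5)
Your overall architecture --- Gronwall induction via \cref{lem:Gronwall}, the commutator identity for $[\partial_t,\nabla^i]$, expansion by Leibniz into products of $\nabla^a\Gamma,\nabla^bS,\nabla^c\dot f$, and H\"older placing one factor in $L^2$ and the rest in $L^\infty$ --- is exactly the paper's strategy, and the bookkeeping (at most one factor of highest order, derivatives of $\dot f$ up to order $k-2$ in $L^\infty$ by \cref{lem:HkplusW2inftyplusLinftyassumption2}) is correctly anticipated. However, where you diverge from the paper is the mechanism for producing the $L^\infty$ bounds, and that divergence creates a genuine gap. You propose to obtain $\|\nabla^aS\|_{L^\infty}$ and $\|\nabla^a\Gamma\|_{L^\infty}$ for $a\leq k-4$ \emph{post hoc} from the $L^2$ bounds at level $a+2$ via \cref{cor:Sobembedding}. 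This is circular inside the induction: the $L^2$ differential inequality at level $i$ requires $L^\infty$ control on $\nabla^a S,\nabla^a\Gamma$ for $a$ up to $i-2$ (these appear, for instance, in the expansion of $\nabla^j\partial_t\Gamma$ for $j\leq i-2$ via the analogues of \eqref{eq:nabla_i_pl_t_g} and \eqref{eq:nabla_i_pl_t_Gamma}), and your Sobolev-embedding route would manufacture $L^\infty$ at level $a=i-2$ from $L^2$ at level $i$ --- precisely what the current Gronwall step is trying to establish. Your final paragraph locates the ``bookkeeping obstacle'' at $i\in\{k-3,k-2\}$, but the circularity already arises at the transition level $k-4$. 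Your first remedy (rearranging so one factor has derivative count $\leq k-4$ and is ``estimated in $L^\infty$ by the inductive hypothesis'') presupposes that the $L^\infty$ bound at level $k-4$ is available, which it is not under your scheme.

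Two further points. First, even if the induction were repaired --- your mention of \cref{thm:L4toH1}-type $L^4$ splitting can indeed replace the $L^\infty\times L^2$ H\"older pairings and close the $L^2$ chain, since those $L^4$ bounds require only $L^2$ at levels $\leq i$ --- \cref{cor:Sobembedding} is a pointwise estimate at a fixed immersion $f$, so it yields only \emph{boundedness} of $\nabla^i S$ in $L^\infty$ on metric balls, not the \emph{Lipschitz continuity} that the lemma claims. The paper sidesteps both issues at once by running a separate $L^\infty$-Gronwall at each level $i\leq k-4$: that is, it obtains the $L^\infty$ differential inequality $\|\partial_t\nabla^iS\|_{L^\infty}\lesssim(1+\|\nabla^iS\|_{L^\infty})\|\dot f\|_{G_f}$ directly from the expansion (using only $L^\infty$ bounds from lower levels, which genuinely are in the inductive hypothesis there), yielding Lipschitz continuity in $L^\infty$ for $i\leq k-4$ before passing to the $L^2$-only levels $i\in\{k-3,k-2\}$. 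Second, the minor point that the base case in the paper is only $i=0$ (from \cref{lem:W2pplusLinftyassumption}); \cref{lem:H3plusW24plusLinftyassumption} is used only in the $k=3$ branch, not in the $k>3$ branch covered by the present lemma.
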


\begin{proof}
We start by noting that for a tensor $X$ we have
\begin{equation}\label{eq:pl_t_nabla_i}
\begin{split}
\partial_t \nabla^i X 
    &= \nabla^i \partial_t X + \sum_{j=1}^i \sum_{m=0}^{j-1}P_{ijm}(\Gamma,\nabla)\partial_t(\nabla^m \Gamma) \nabla^{i-j}X \\
    &= \nabla^i \partial_t X + \sum_{j=1}^i \sum_{m=0}^{j-1} Q_{ijm}(\Gamma,\nabla)(\nabla^{m}\partial_t \Gamma) \nabla^{i-j}X,
\end{split}
\end{equation}
where $P_{ijm}$ and $Q_{ijm}$ are homogeneous polynomials of degree $j-m-1$ in $\Gamma$ and $\nabla$ that produce a tensor (e.g., for degree $2$ it is a linear combination of $\Gamma\Gamma$ and $(\nabla\Gamma)$, but not of $\Gamma\nabla$ or $\nabla^2$ or $\nabla(\Gamma\cdot)$).
This can be easily obtained by induction (writing $\partial_t\nabla^{i+1}X=\partial_t\nabla(\nabla^iX)$), using that $\partial_t \nabla Y= \nabla\pl_t Y + (\partial_t\Gamma) Y $ for any tensor $Y$ and an appropriate type of $\Gamma$.
Keep in mind that above we simply write $\Gamma$ even though each occurrence of $\Gamma$ (e.g., in the summands of $P_{ijm}(\Gamma,\nabla)$) may be of different type.
Also note that in \eqref{eq:pl_t_nabla_i} we did not indicate explicitly which indices contract in the products of tensors, since this is irrelevant for the remaining argument;
however, be aware that even the summands of each single polynomial $P_{ijm}(\Gamma,\nabla)$ differ in how they act on $\partial_t(\nabla^m \Gamma)$ and $\nabla^{i-j}X$ (analogously for $Q_{ijm}(\Gamma,\nabla)$).

Second, applying $\nabla^i$ to both sides of \eqref{eq:dt_nabla} and arguing as in the proof of \cref{lem:H3plusW24plusLinftyassumption}, we obtain that, uniformly on metric balls, we have
\begin{equation}\label{eq:nabla_i_pl_t_Gamma}
\|\nabla^i \partial_t \Gamma\|_{L^p} \sim \|\nabla^{i+1}\partial_t g\|_{L^p},
\end{equation}
using that $g^{-1}$ is uniformly bounded on metric balls (by \cref{lem:Linftyassumption} and the assumption that $\|h\|_{G_f} \gtrsim  \|\nabla h\|_{L^{\infty}(g)}$).

Third, applying $\nabla^i$ to both sides of \eqref{eq:variationmetric}, we obtain
\begin{equation}
    \label{eq:nabla_i_pl_t_g}
    \nabla^i \pl_t g = 2\sym\inner{\nabla^{i+1}\pl_t f}{Tf} + 2\sum_{j=1}^i \binom{i}{j}\sym \inner{\nabla^{i-j+1}\pl_t f}{\nabla^{j-1}S}.
\end{equation}

Fourth, applying $\nabla^i$ to $\pl_t S = \nabla^2\dot f+\pl_t \Gamma \,Tf$, we obtain that
\begin{equation}
    \nabla^i\pl_t S = \nabla^{i+2}\dot f + \sum_{j=0}^{i-1}\binom{i}{j}\nabla^j \pl_t\Gamma\, \nabla^{i-1-j}S + \nabla^i \pl_t \Gamma\, Tf.
\end{equation}
Hence, using \eqref{eqn:normMetric},
\begin{align}
    \label{eq:nabla_i_pl_t_S}
    \|\nabla^i\pl_t S\|_{L^p} & \lesssim \|\nabla^{i+2}\dot f\|_{L^p} + \sum_{j=0}^{i-2} \|\nabla^j \pl_t\Gamma\|_{L^\infty} \|\nabla^{i-1-j}S\|_{L^p}\\
        &\quad + \|\nabla^{i-1} \pl_t\Gamma\|_{L^p}\|S\|_{L^\infty} + \|\nabla^i \pl_t \Gamma\|_{L^p}.
\end{align}

The proof now follows by induction in $i$, using that, for $m\leq k-2$, $\|\nabla^m\partial_t f\|_{L^\infty(g)}\lesssim\|\partial_t f\|_{G_f}$ uniformly on a metric ball by \cref{lem:HkplusW2inftyplusLinftyassumption2}:
The base case $i=0$ is provided by \cref{lem:W2pplusLinftyassumption}.
Assume the bounds are established for $l<i$ for some $i$.
Assume first that $i\le k-4$, then \eqref{eq:nabla_i_pl_t_Gamma} and \eqref{eq:nabla_i_pl_t_g} imply that on each metric ball we have
\begin{align}\label{eq:nabla_i_dot_g_estimate}
\|\nabla^{i+1}\pl_t g\|_{L^\infty} 
    &\lesssim \|\nabla^{i+2} \dot f\|_{L^\infty} + \sum_{j=1}^{i+1} \|\nabla^{i+2-j}\dot f\|_{L^\infty} \|\nabla^{j-1} S\|_{L^\infty} \lesssim  (1+ \|\nabla^iS\|_{L^\infty})\|\dot f\|_{G_f},
\end{align}
where we used the assumptions on $G_f$ as well as the induction hypothesis for $j\le i$.
Similarly, using that $\|\nabla^{k-1}\dot f\|_{L^2(g)},\|\nabla^k\dot f\|_{L^2(g)}\lesssim\|\dot f\|_{G_f}$ uniformly on a metric ball by \cref{lem:HkplusW2inftyplusLinftyassumption2}, for $i\in \{k-3,k-2\}$ we obtain the same estimate with $L^2$ instead of $L^\infty$ on the leftmost and rightmost side.
Combining this estimate with \eqref{eq:nabla_i_pl_t_Gamma}, \eqref{eq:nabla_i_pl_t_S}, \cref{lem:HkplusW2inftyplusLinftyassumption2}, and the induction hypothesis yields
\begin{align}
    \|\nabla^i\pl_t S\|_{L^p} \lesssim (1+ \|\nabla^iS\|_{L^p})\|\dot f\|_{G_f},
\end{align}
where $p=\infty$ for $i\le k-4$ and $p=2$ for $i\in \{k-3,k-2\}$.
We now apply \eqref{eq:pl_t_nabla_i} to $X=S$, obtaining, for the same values of $p$,
\begin{align}
    \|\pl_t\nabla^i S\|_{L^p} \lesssim (1+ \|\nabla^iS\|_{L^p})\|\dot f\|_{G_f},
\end{align}
which proves the induction step for $S$ using \cref{lem:Gronwall}.
The induction step for $\Gamma$ is then obtained similarly, using \eqref{eq:pl_t_nabla_i} for $X=\Gamma$ and the induction step for $S$.
\end{proof}

\begin{remark}[Improved Bounds Below Top Derivatives]
    The assumptions of \cref{lem:HkplusW2inftyplusLinftyassumption2} actually also imply that on any metric ball $\|h\|_{G_f} \gtrsim \| \nabla^{k-1} h \|_{L^p(g)}$ for any $p\in[1,\infty)$.
    Thus, the assumptions of \cref{lem:HkplusW2inftyplusLinftyassumption} imply that for $i=k-3$ the bounds are in $L^p$, $p\in [1,\infty)$, and not only for $p=2$.
    However, we will not need these improved bounds in the following.
\end{remark}

\section{Completeness for abstract Riemannian metrics on the Space of Surfaces}\label{sec:completeness}
In this Section we present the main results of the article --- completeness properties on the space of surfaces --- for an abstract Riemannian metric $G$ under the assumption that $G$ bounds certain Sobolev norms. We will then construct in~\cref{sec:curvature_weighted}  Sobolev  metrics of order $k\geq 3$, that satisfy this assumption.
\subsection{Geodesic and Metric Completeness}
We start by proving geodesic and metric completeness. To this end we will need the following assumption for the metric $G$, which we will use throughout the remainder of this section:
\begin{assumption}\label{assumption_completeness}
Let $k\geq 3$ and let $G$ be a smooth Riemannian metric on $\Imm^k(M,\bR^d)$ such 
that on any metric ball we have
\begin{align}\label{eq:metric_completeness_cond}
\|h\|_{G_f}
 \gtrsim
\| \nabla^k h \|_{L^2(g)}+\| \nabla^2 h \|_{L^p(g)}+ \|\nabla h\|_{L^{\infty}(g)}+\|h \|_{L^2(g)}
\end{align}
with $p=4$ for $k=3$ and $p=\infty$ else.
\end{assumption}
The completeness result for the abstract Riemannian metric is then given as follows:
\begin{theorem}[Metric and Geodesic Completeness for General Riemannian Metrics]\label{thm:metric_completeness_abstract}
Let $k\geq 3$ and let $G$ be a smooth Riemannian metric on $\Imm^k(M,\bR^d)$ that satisfies \cref{assumption_completeness}. We have:
\begin{enumerate}[label=(\alph*)]
\item  
$(\Imm^k(M,\bR^d),G)$ is metrically and geodescially complete.
\item 
If the metric $G$ is in addition invariant under the action of $\Diff(M)$, then the geodesic completeness continues to hold on $\Imm^l(M,\bR^d)$ for  any $k\leq l\in\bN$ and thus, in particular, on the space of smooth immersions $\Imm(M,\bR^d)$.
\end{enumerate}
\end{theorem}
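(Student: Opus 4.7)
My plan is to apply the abstract criterion \cref{thm:abstract_completeness} with ambient Hilbert space $(\ol{\mathcal M},\ol G)=(H^k(M,\bR^d),\ol G^k)$ (complete since a Hilbert space, cf.\ \cref{def:backgroundmetric}) and open submanifold $\mathcal M=\Imm^k(M,\bR^d)$ carrying the metric $G$. Once the two conditions of that criterion are verified on every $G$-metric ball $B$, metric completeness of $(\Imm^k(M,\bR^d),G)$ follows, and geodesic completeness is then automatic by the remark after \cref{thm:abstract_completeness}; this yields part~(a).

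\textbf{Verifying the two conditions.} Fix a $G$-metric ball $B$. Condition~(a) of \cref{thm:abstract_completeness} requires $\ol G^k_f(h,h)\lesssim G_f(h,h)$ uniformly for $f\in B$, which reduces to controlling $\|h\|_{L^2(\ol g)}$ and $\|\ol\nabla^kh\|_{L^2(\ol g)}$ by $G_f(h,h)$. The ingredients are precisely the bounds from \cref{sec:bounds}: \cref{sqrtvol,lem:Linftyassumption} together with \cref{rem:volumedens} give uniform bounds on $\rho$, $g$ and $g^{-1}$ (and hence equivalence between $L^p(g)$- and $L^p(\ol g)$-norms on $B$), while \cref{lem:W2pplusLinftyassumption} combined with \cref{lem:H3plusW24plusLinftyassumption} in the borderline case $k=3$ and with \cref{lem:HkplusW2inftyplusLinftyassumption2,lem:HkplusW2inftyplusLinftyassumption} when $k>3$ provide uniform $L^p(\ol g)$-bounds on $\H$, $S$, and $\nabla^i\Gamma$ for $0\le i\le k-2$. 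Writing $\ol\nabla=\nabla-\Gamma$ and iterating the Leibniz-type formula \eqref{eqn:GammaTensor}, I expand $\ol\nabla^kh$ as $\nabla^kh$ plus finitely many cross-terms of the form $P(\Gamma,\ldots,\ol\nabla^{k-2}\Gamma)\,\nabla^j h$ with $j<k$; each such term is controlled in $L^2(\ol g)$ by H\"older's inequality together with the preceding $\Gamma$-bounds and with bounds on $\nabla^j h$ obtained from \cref{lem:metricdom} and \cref{cor:Sobembedding}, the total being bounded by $G_f(h,h)$ via \cref{assumption_completeness}. Condition~(b) of \cref{thm:abstract_completeness} follows from the uniform $L^\infty(\ol g)$-bound on $g^{-1}$ (hence the uniform lower bound $|T_xf\cdot v|_{\bR^d}\ge c|v|_{\ol g}$ on $B$) together with the Sobolev embedding $H^k\hookrightarrow C^1$, valid as $k\ge 3>2$: any $H^k$-limit of points in $B$ inherits the injectivity bound on $Tf$ and is therefore still an immersion, so the $\ol G^k$-closure of $B$ lies in $\Imm^k(M,\bR^d)$.

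\textbf{Part (b) and the main obstacle.} For part~(b), my plan is to run the standard no-loss-no-gain argument. Given $l\ge k$ and $(f_0,\dot f_0)\in T\Imm^l(M,\bR^d)$, smoothness and strongness of $G$ on $\Imm^l$ yield local $H^l$-existence of the geodesic on a maximal open interval $I_l$; by uniqueness this $H^l$-solution agrees with the $H^k$-geodesic, which is globally defined by part~(a), wherever both are defined. If $T=\sup I_l<\infty$, then the $H^k$-geodesic on $[0,T]$ stays inside a $G$-metric ball so that the estimates of \cref{sec:bounds} apply uniformly; using the $\Diff(M)$-invariance of $G$ to obtain the correct structural form of the Christoffel symbols of $G$ at level $l$, a bootstrap estimate in $\|f(t)\|_{H^l}$ precludes finite-time blow-up and contradicts maximality of $I_l$. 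The case $l=\infty$ follows by intersection, $H^\infty=\bigcap_{l\in\bN} H^l$. The hardest step in this plan will be the verification of condition~(a) of \cref{thm:abstract_completeness} in the critical case $k=3$, where only the $L^4$-control of $\Gamma$ and $S$ from \cref{lem:H3plusW24plusLinftyassumption} is available and every cross-term in the expansion of $\ol\nabla^3 h$ must be balanced exactly against an $L^4$- or $L^\infty$-bound from \cref{assumption_completeness}; this is precisely why the Michael--Simon--Sobolev-type inequalities of \cref{sec:SobolevEmbeddings} were developed with such care.
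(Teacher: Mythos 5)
Your proposal is correct and follows essentially the same route as the paper: you apply \cref{thm:abstract_completeness} with $(\ol{\mathcal M},\ol G)=(H^k(M,\bR^d),\ol G^k)$, verify condition~\ref{assumption2} via the uniform bounds on $g^{-1}$ (equivalently, on $\rho$) plus the $C^1$-embedding, verify condition~\ref{assumption1} by expanding $\ol\nabla^k h = (\nabla-\Gamma)^{k-1}Th$ and controlling each cross-term with the Christoffel/second-fundamental-form bounds together with \cref{lem:metricdom} and \cref{cor:Sobembedding}, and handle part~(b) by the no-loss-no-gain mechanism. The paper invokes the no-loss-no-gain result as a black box from~\cite{ebin1970groups,bruveris2017regularity,bauer2020fractional} rather than re-running the bootstrap, and it singles out the $k=3$ case with a fully explicit term-by-term expansion (using the $L^4$-bounds of \cref{lem:W2pplusLinftyassumption} and the $L^2$-bound on $\ol\nabla\Gamma$, $\ol\nabla S$ of \cref{lem:H3plusW24plusLinftyassumption}) before giving the general iteration for $k>3$; these are presentational rather than substantive differences from your plan.
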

\begin{proof}
To prove the first statemtent we aim to show that the Riemannian manifold $(\mathcal M,G)=(\Imm^k(M,\bR^d),G)$ satisfies the conditions \ref{assumption1}--\ref{assumption2} of \cref{thm:abstract_completeness} with $(\ol{\mathcal M},\ol G)=(H^k(M,\bR^d),\ol G^k)$ for $\ol G^k$ from \eqref{equation:backgroundmetric}.
The main difficulty lies in showing that the metric $G$ dominates the background $H^k$-metric on metric balls.

In \cref{rem:volumedens} we already discussed that
on every $G$-metric ball in $\Imm^k(M,\bR^d)$ there exists $C>0$ such that for every $f$ in the ball the volume density is bounded by $C$, i.e.,
\begin{equation}
\|\vol/\ol{\vol}\|_{L^\infty}=\|\rho\|_{L^\infty}>C, 
\end{equation}
which shows that condition~\ref{assumption2} of \cref{thm:abstract_completeness} is satisfied. It remains to prove condition~\ref{assumption1}, i.e., to show that  the background $H^k$-metric $\ol G^k$ from~\eqref{equation:backgroundmetric} is uniformly upper bounded by $G$ on metric balls.

We first treat the case $k=3$ and will deal with general $k>3$ afterwards. 
To bound the $L^2$-term of $\ol G$ by the $L^2$-term of $G$ one only needs to control the volume density of $f$, which follows directly from \cref{rem:volumedens}. It remains to look at the highest order term, i.e., we consider the $H^3$-term only. Using \cref{lem:Linftyassumption} we can bound the volume density $\rho$ and (finite-dimensional) Riemannian metric $g$ on metric balls leading to
\begin{align}
&\int_{M}\ol g(\ol \nabla^3 h,\ol\nabla^3 h)\ol \vol 
 \lesssim \int_{M} g(\ol \nabla^3 h,\ol\nabla^3 h) \vol.
\end{align}
Next we convert the covariant derivative $\ol \nabla$ of the background metric $\ol{g}$ to the covariant derivative $\nabla$ of the metric $g$ using the tensors $\Gamma$ as introduced in~\eqref{eq:Gamma},
\begin{align}
&\int_{M} g(\ol \nabla^3 h,\ol\nabla^3 h) \vol=
\int_{M} g(\ol \nabla^2 Th,\ol\nabla^2 Th) \vol\\&\quad
=\int_{M} g\left(\ol \nabla ((\nabla-\Gamma) Th),\ol\nabla((\nabla-\Gamma) Th)\right) \vol\\&
\quad\lesssim 
\int_{M} g\left(\ol \nabla (\nabla Th),\ol\nabla (\nabla Th)\right) \vol
+\int_{M} g\left(\ol \nabla (\Gamma Th),\ol\nabla(\Gamma Th)\right) \vol.\label{equation:backgroundmetricbounds0}
\end{align}
 We treat the two terms separately. To bound the first term we repeat the same procedure  to obtain
\begin{align} 
&\int_{M} g\left(\ol \nabla (\nabla Th),\ol\nabla (\nabla Th)\right) \vol
\\&\quad\lesssim \int_{M} g\left(\nabla^2 Th,\nabla^2 Th\right) \vol
+\int_{M} g\left(\Gamma\nabla Th,\Gamma \nabla Th\right) \vol\\&\quad
\lesssim G_f(h,h)+\int_{M} g\left(\Gamma\nabla Th,\Gamma \nabla Th\right) \vol.\label{equation:backgroundmetricbounds}
\end{align}
By \cref{lem:W2pplusLinftyassumption}, our assumption on the metric $G$ implies that $\Gamma$ is uniformly bounded in $L^4$ on metric balls, and thus, using
Cauchy--Schwarz on the second term  of~\eqref{equation:backgroundmetricbounds} yields
\begin{align} 
&\int_{M} g\left(\Gamma\nabla Th,\Gamma \nabla Th\right) \vol\label{equation:backgroundmetricbounds1}\\&\leq
\left(\int_{M} g\left(\nabla Th, \nabla Th\right)^2 \vol\right)^{1/2}
\left(\int_{M} g(\Gamma, \Gamma)^2 \vol\right)^{1/2}\\&
\lesssim 
\left(\int_{M} g\left(\nabla Th, \nabla Th\right)^2 \vol\right)^{1/2}\lesssim G_f(h,h).
\end{align}
To bound the second term in~\eqref{equation:backgroundmetricbounds0} we calculate 
\begin{align}
&\int_{M} g\left(\ol \nabla (\Gamma Th),\ol\nabla (\Gamma Th)\right) \vol\\
&\quad= \int_{M} g\left((\ol \nabla \Gamma) Th+\Gamma\ol \nabla Th,(\ol \nabla \Gamma) Th+\Gamma\ol \nabla Th\right) \vol\\
&\quad\lesssim \int_{M} g\left((\ol \nabla \Gamma) Th,(\ol \nabla \Gamma) Th\right) \vol
+\int_{M} g\left(\Gamma\ol \nabla Th,\Gamma\ol \nabla Th\right) \vol\\
&\quad\lesssim \int_{M} g\left((\ol \nabla \Gamma) Th,(\ol \nabla \Gamma) Th\right) \vol+\int_{M} g\left(\Gamma\nabla Th,\Gamma\nabla Th\right) \vol\\&\qquad\qquad\qquad
+\int_{M} g\left(\Gamma\Gamma Th,\Gamma\Gamma Th\right) \vol.
\end{align}
The second term is similar to~\eqref{equation:backgroundmetricbounds1} (only the type of $\Gamma$ and its contraction with $\nabla Th$ differ) and can be bounded in the same way. Note that here $\Gamma$ acts only on part of the $(0,2)$ tensor $\nabla Th$ and that the notation $\Gamma \nabla Th$ is thus ambiguous, but this ambiguity is irrelevant for our estimates.
For the third term we use again \cref{lem:W2pplusLinftyassumption} to control $\Gamma$ in $L^4$ and our assumptions on $G$ to control $\|Th\|_{L^\infty}$ by $\|h\|_{G_f}$.
It remains to control the first term. Using again the assumption that we control $Th$ in $L^{\infty}$, the problem reduces to the control of $\|\ol \nabla \Gamma\|_{L^2(g)}\sim\|\ol \nabla \Gamma\|_{L^2(\ol g)}$, which holds by \cref{lem:H3plusW24plusLinftyassumption} and our assumptions on the metric $G$. This concludes the proof of condition~\ref{assumption1} for $k=3$.

Next we treat the case $k>3$. Note that the proof also holds for $k=3$ and that the previous presentation of this special case was merely for readability.
Similar to the case $k=3$, we only need to
bound the highest order term as the $L^2$-term can be handled using \cref{lem:Linftyassumption}. We start by investigating $\ol \nabla^kh=(\nabla-\Gamma)^{k-1}T h$. We claim that, for any $k$, the expression $(\nabla-\Gamma)^{k-1}T h$ is a sum of terms of the form
\begin{equation}\label{eq:termsnablak}
\underbrace{\nabla^{i_1}(\Gamma)\ldots\nabla^{i_m}(\Gamma)}_{m\text{ terms}}\nabla^{k-1-i-m}Th=\underbrace{\nabla^{i_1}(\Gamma)\ldots\nabla^{i_m}(\Gamma)}_{m\text{ terms}}\nabla^{k-i-m}h,
\end{equation}
where
\begin{equation} 
0\leq m\leq k-1,\quad  0\leq i\leq k-1-m, \quad\text{and } i_1+\ldots+i_m=i.
\end{equation} 
We want to emphasize that $\Gamma$ in each of these terms can be of different type, which for the sake of readability we do not indicate,
and that we do not indicate either how each occurring tensor acts on the remaining terms (i.e.\ along which dimensions all tensors contract).
However, as before, this ambiguity is irrelevant for the remaining argument.

Using induction in $k$, one can easily see that this claim is true: In the base case $k=1$ one simply obtains $Th$ so that the claim holds trivially. For the induction step we  need to apply $\nabla-\Gamma$ to a term of the form~\eqref{eq:termsnablak} and check that this yields again terms of the correct type. This follows by a straightforward calculation.

Using  again \cref{lem:Linftyassumption} we can then estimate
\begin{align}
&\int_{M}\ol g(\ol \nabla^k h,\ol\nabla^k h)\ol \vol 
 \lesssim \int_{M} g((\nabla-\Gamma)^{k-1} Th,(\nabla-\Gamma)^{k-1}T h) \vol.
\end{align}
Applying Young's inequality we can bound the $L^2(g)$-norm of $(\nabla-\Gamma)^{k-1} Th$ by the sum of the $L^2(g)$-norms of terms of the form~\eqref{eq:termsnablak}. To bound these terms we proceed by investigating them term by term depending on the value of $m$: For $m=0$ (and thus $i=0$) we get exactly the reparametrization-invariant $H^k$-term $\|\nabla^k h\|_{L^2(g)}$, which can be bounded by assumption. For $m=1$ and $i=0$  we can estimate via
\[
\|\Gamma\nabla^{k-1}h\|^2_{L^2(g)}\leq \|\Gamma\|^2_{L^{\infty}(g)}\|\nabla^{k-1}h\|^2_{L^2(g)}\lesssim \|\Gamma\|^2_{L^{\infty}(g)}\|h\|^2_{G_f},
\]
where we can control the $L^{\infty}$-norm of $\Gamma$ via \cref{lem:W2pplusLinftyassumption} and the $L^2$-norm of $\nabla^{k-1}h$ via \cref{lem:HkplusW2inftyplusLinftyassumption2}.  For the remaining terms we have that $i+m\geq 2$
and thus we can control the $L^{\infty}$-norm of $\nabla^{k-i-m}h$ using \cref{lem:HkplusW2inftyplusLinftyassumption2}.
Thus to control these terms we only need to control  \begin{equation}\|\nabla^{i_1}(\Gamma)\ldots\nabla^{i_m}(\Gamma)\|^2_{L^2(g)}.
\end{equation}
If $m=1$, then the above term reduces to $\|\nabla^{i_1}(\Gamma)\|^2_{L^2(g)}$,
which we can control by~\cref{lem:HkplusW2inftyplusLinftyassumption} since $i_1=i\leq k-2$ (for $i\leq k-4$ note that any $L^2$-norm can be controlled by the $L^\infty$ norm via H\"older's inequality and \cref{sqrtvol}). If $m=2$ then $i_1\leq k-4$ and $i_2\leq k-3$ (or vice versa) since
$i_1+i_2\leq k-3$. Without loss of generality we assume that $i_1\leq i_2$ and estimate
\begin{equation}
\|\nabla^{i_1}(\Gamma)\nabla^{i_2}(\Gamma)\|^2_{L^2(g)}\leq
\|\nabla^{i_1}(\Gamma)\|^2_{L^{\infty}(g)}\|\nabla^{i_2}(\Gamma)\|^2_{L^2(g)},
\end{equation} 
which is bounded by~\cref{lem:HkplusW2inftyplusLinftyassumption}.
Finally for $m\geq 3$ we estimate 
\begin{equation}\|\nabla^{i_1}(\Gamma)\ldots\nabla^{i_m}(\Gamma)\|^2_{L^2(g)}\leq
\|\nabla^{i_1}(\Gamma)\|^2_{L^{\infty}(g)}\ldots\|\nabla^{i_m}(\Gamma)\|^2_{L^{\infty}(g)},
\end{equation}
which is bounded again by~\cref{lem:HkplusW2inftyplusLinftyassumption}, since $i_j\leq k-4$ for all $1\leq j\leq m$.
Thus we have concluded the proof of condition~\ref{assumption1} for $k>3$. From here the proof of metric and geodesic completeness on $\Imm^k(M,\bR^d)$ follows by invoking \cref{thm:abstract_completeness}.

The remaining statements on geodesic completeness on $\Imm^l(M,\bR^d)$ for $l\geq k$ follow directly from the $\Diff(M)$-invariance of the metric $G$ by applying an Ebin--Marsden type no-loss-no-gain result~\cite{ebin1970groups}, cf.~\cite{bruveris2017regularity,bauer2020fractional} for versions that are applicable in the context of the present article.
\end{proof}

\subsection{Existence of Minimizing Geodesics}\label{sec:existence}
Next we discuss the existence of minimizing geodesics for  Riemannian metrics on $\Imm^k(M,\bR^d)$. By requiring an additional property for the Riemannian metric this will turn out as an immediate consequence of the abstract result \cref{thm:abstract_completeness}, using the estimates we have already proved for the completeness result. We first formulate this additional assumption for the abstract Riemannian metric $G$:
\begin{assumption}\label{assumption_convexity}
Let $k\geq 3$ and let $G$ be a smooth Riemannian metric on $\Imm^k(M,\bR^d)$ such
that the induced path energy $E$
is sequentially lower semicontinuous with respect  to weak convergence in $H^1((0,1),H^k(M,\bR^d))$.
\end{assumption}
Using this existence of minimizing geodesics can be proven as follows:
\begin{theorem}[Existence of Minimizing Geodesics for General Riemannian Metrics]\label{thm:existence_abstract}
Let $k\geq 3$ and let $G$ be a smooth Riemannian metric on $\Imm^k(M,\bR^d)$ that satisfies~\cref{assumption_completeness,assumption_convexity}.
Then there exists a minimizing geodesic in $(\Imm^k(M,\bR^d),G)$ between
any two surfaces from the same connected component of $\Imm^k(M,\bR^d)$.
\end{theorem}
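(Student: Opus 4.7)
The plan is to reduce the statement to a direct application of the abstract criterion \cref{thm:abstract_completeness}, with the identification $(\ol{\mathcal M},\ol G) = (H^k(M,\bR^d),\ol G^k)$ as the ambient complete Hilbert space and $(\mathcal M,G) = (\Imm^k(M,\bR^d),G)$ as the open submanifold. Since $H^k(M,\bR^d)$ endowed with $\ol G^k$ is a Hilbert space it is in particular metrically complete, and $\Imm^k(M,\bR^d)$ is an open subset of it (cf.\ \cref{sec:spaces}), so the ambient setup required by \cref{thm:abstract_completeness} is already in place.

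My next step is to observe that conditions~\ref{assumption1} and~\ref{assumption2} of \cref{thm:abstract_completeness} have already been verified for this identification during the proof of \cref{thm:metric_completeness_abstract}, under the standing \cref{assumption_completeness}. More precisely, condition~\ref{assumption2} follows from the uniform two-sided bound on the volume density $\rho$ on any $G$-metric ball established in \cref{rem:volumedens}, which guarantees that the $\ol G^k$-closure of any $G$-ball inside $\Imm^k(M,\bR^d)$ still consists of immersions. Condition~\ref{assumption1}, namely the uniform domination $\ol G^k \lesssim G$ on every $G$-metric ball, is precisely the content of the bulk computation of \cref{thm:metric_completeness_abstract}, where the tensors $\Gamma$ and the derivatives $\ol\nabla^k h$ are rewritten through $\nabla$ and $\Gamma$ and then controlled by means of \cref{lem:Linftyassumption,lem:W2pplusLinftyassumption,lem:H3plusW24plusLinftyassumption,lem:HkplusW2inftyplusLinftyassumption2,lem:HkplusW2inftyplusLinftyassumption}. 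Nothing new needs to be done here; I would simply reuse the argument verbatim.

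The only further input required is condition~\ref{assumption3} of \cref{thm:abstract_completeness}, the weak sequential lower semicontinuity of the $G$-path energy along $H^1((0,1),H^k(M,\bR^d))$-sequences with fixed endpoints in $\Imm^k(M,\bR^d)$. This is exactly the content of \cref{assumption_convexity} and is therefore assumed outright. Invoking \cref{thm:abstract_completeness} then yields a $G$-minimizing geodesic between any two $f_0,f_1\in\Imm^k(M,\bR^d)$ at finite $G$-distance, and simultaneously identifies finite $G$-distance with membership in the same connected component of $\Imm^k(M,\bR^d)\subset H^k(M,\bR^d)$; this is precisely the asserted statement.

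The main obstacle is not inside this proof at all: it has been dispatched earlier, partly by the geometric estimates of \cref{sec:bounds} that yield~\ref{assumption1}, and partly by the later verification of \cref{assumption_convexity} for the concrete curvature-weighted metric $G^k$, which will be carried out in \cref{sec:curvature_weighted}. Once those two ingredients are in place, the existence of minimizers is a clean corollary of the abstract completeness criterion, and essentially no additional analysis is required at this stage.
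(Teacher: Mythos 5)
Your proposal is correct and follows exactly the route taken in the paper: cite \cref{thm:metric_completeness_abstract} for conditions~\ref{assumption1}--\ref{assumption2} of \cref{thm:abstract_completeness}, note that condition~\ref{assumption3} is precisely \cref{assumption_convexity}, and invoke \cref{thm:abstract_completeness}. Nothing to add.
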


\begin{proof}
    By \cref{thm:metric_completeness_abstract}, 
    the metric $G$ satisfies conditions \ref{assumption1}--\ref{assumption2} from \cref{thm:abstract_completeness}.
    By assumption, condition \ref{assumption3} is also satisfied.
    Thus, the result immediately follows from \cref{thm:abstract_completeness}.
\end{proof}

\subsection{Induced Results on the Shape Space of Unparametrized Surfaces}
In this section we will study completeness properties of the space of unparametrized surfaces $\Shape^k(M,\bR^d)$ equipped with the quotient distance of the geodesic distance of a reparametrization-invariant Riemannian metric on $\Imm^k(M,\bR^d)$.
As explained in~\cref{sec:diffgroup_action} this quotient space does not carry the structure of a manifold, and thus we do not obtain a Riemannian metric on it, but we will only view it as a metric space.  The first main result of this section is presented in the following theorem:
\begin{theorem}[Metric Completeness of the Shape Space $\Shape^k(M,\bR^d)$]\label{thm:metric_complete_shape}
Let $k\geq 3$ and let $G$ be a smooth Riemannian metric on $\Imm^k(M,\bR^d)$ that satisfies~\cref{assumption_completeness}. Assume in addition that $G$ is invariant under the action of $\Diff^k(M)$.
Then $(\Shape^k(M,\bR^d),\dist)$ is a complete metric space, where $\dist$
denotes the quotient distance of the geodesic distance $\dist_G$ on $\Imm^k(M,\bR^d)$.
\end{theorem}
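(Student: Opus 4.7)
The plan is to reduce metric completeness of $(\Shape^k(M,\bR^d),\dist)$ to the metric completeness of $(\Imm^k(M,\bR^d),\dist_G)$ established in \cref{thm:metric_completeness_abstract}. The key ingredient beyond standard quotient-space arguments is that the reparametrization invariance of $G$ transfers to the geodesic distance $\dist_G$, which lets us lift a Cauchy sequence from the quotient to a genuine Cauchy sequence in the space of immersions.

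Before starting, I would record two elementary facts. First, by the definition of the quotient distance, the projection $\pi:\Imm^k(M,\bR^d)\to\Shape^k(M,\bR^d)$ is $1$-Lipschitz, so convergence in $\dist_G$ descends to convergence in $\dist$. Second, the invariance $\dist_G(f\circ\varphi,\tilde f\circ\varphi)=\dist_G(f,\tilde f)$ for all $\varphi\in\Diff^k(M)$ is immediate from the reparametrization invariance of $G$: any path connecting $f$ and $\tilde f$ can be precomposed with $\varphi$ without changing its path energy.

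Now let $([f_n])$ be a Cauchy sequence in $(\Shape^k(M,\bR^d),\dist)$. Passing to a subsequence I may assume $\dist([f_n],[f_{n+1}])<2^{-n-1}$. For each $n$, by definition of the infimum, I would pick $\varphi_n\in\Diff^k(M)$ with $\dist_G(f_n,f_{n+1}\circ\varphi_n)<2^{-n}$. I then telescope: set $\psi_1=\mathrm{id}_M$, $\psi_{n+1}=\varphi_n\circ\psi_n$, and $\tilde f_n:=f_n\circ\psi_n$, so that $[\tilde f_n]=[f_n]$. Using the reparametrization invariance of $\dist_G$ noted above,
\begin{equation*}
\dist_G(\tilde f_n,\tilde f_{n+1})=\dist_G(f_n\circ\psi_n,f_{n+1}\circ\varphi_n\circ\psi_n)=\dist_G(f_n,f_{n+1}\circ\varphi_n)<2^{-n},
\end{equation*}
so the summability of $2^{-n}$ makes $(\tilde f_n)$ Cauchy in $(\Imm^k(M,\bR^d),\dist_G)$.

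Applying \cref{thm:metric_completeness_abstract}, which is available since $G$ satisfies \cref{assumption_completeness}, yields $\tilde f_n\to\tilde f$ in $\Imm^k(M,\bR^d)$ for some immersion $\tilde f$, and the $1$-Lipschitz quotient projection gives $[f_n]=\pi(\tilde f_n)\to[\tilde f]$ in the shape space. Since a Cauchy sequence that has a convergent subsequence converges to the same limit, the original sequence converges, completing the argument. I do not foresee a real obstacle: the only delicate step, the choice of near-optimal reparametrizations $\varphi_n$, requires only the definition of the infimum and not its attainment (the latter is the subject of a separate part of \cref{theorem:main}).
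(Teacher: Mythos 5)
Your proof is correct in its core argument and takes a genuinely different route from the paper. The paper invokes an off-the-shelf metric-geometry lemma (\cref{lem:abstract_quotientspace}): a group acting by isometries on a complete metric space with Hausdorff quotient yields a complete metric quotient, so the proof is reduced to checking that the $\Diff^k(M)$-action is isometric (from reparametrization invariance of $G$), that $\Shape^k$ is Hausdorff (a citation to the literature), and that $(\Imm^k,G)$ is complete (\cref{thm:metric_completeness_abstract}). You instead unpack that abstract lemma and give the standard hands-on argument directly: pick a rapidly Cauchy subsequence in the quotient, select near-optimal reparametrizations $\varphi_n$, telescope with $\psi_{n+1}=\varphi_n\circ\psi_n$ so that $\tilde f_n = f_n\circ\psi_n$ is Cauchy in $\Imm^k$, and push the limit back down via the $1$-Lipschitz projection. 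Both are sound; the paper's is shorter by deferring the work to a citation, while yours is self-contained.

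One small but real omission: the statement claims $(\Shape^k(M,\bR^d),\dist)$ is a complete \emph{metric} space, and the quotient distance is a priori only a pseudo-metric. Your argument shows completeness for the pseudo-metric but never rules out $\dist([f],[\tilde f])=0$ for $[f]\neq[\tilde f]$. This is where the Hausdorffness of $\Shape^k(M,\bR^d)$ enters in the paper's proof (via \cref{lem:abstract_quotientspace}); without it, or some equivalent argument (e.g.\ showing the $\Diff^k(M)$-orbits are closed and using that $\dist_G$ induces the manifold topology), the conclusion that $\dist$ is a genuine metric is not established. You should add a sentence handling this point, for instance by citing the Hausdorff property of the quotient and observing that positivity of $\dist$ between distinct orbits then follows as in the standard quotient-metric lemma.
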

To prove this result we first recall the following classic Lemma from metric geometry, cf.~\cite{burago2001course,bruveris2015completeness}:
\begin{lemma}[Completeness of Quotient Spaces]\label{lem:abstract_quotientspace}
Let $(\mathcal M, \dist)$ be a metric space upon which a group $\Group$ acts by isometries.
If the quotient space $\mathcal M / \Group$  is Hausdorff, then the quotient distance
\begin{equation}
\dist(\Group \cdot x, \Group \cdot y) :
= \inf_{h \in \Group} \dist(x,h\cdot y)
\end{equation}
defines a metric on $\mathcal M / \Group$ that is compatible with the quotient topology on $\mathcal M / \Group$. If $(\mathcal M, \dist)$ is complete, then so is $(\mathcal M / \Group, \dist)$.
\end{lemma}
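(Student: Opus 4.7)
The plan is to split the proof into three parts that reflect the three claims of the lemma: (i)~$\dist$ as defined is a genuine metric on $\mathcal M/\Group$, (ii)~it induces the quotient topology, and (iii)~completeness transfers.

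First I would verify that $\dist([x],[y]) = \inf_{h\in \Group}\dist(x,h\cdot y)$ is independent of representatives, symmetric, non-negative, and satisfies the triangle inequality. Each of these properties follows immediately from the fact that $\Group$ acts by isometries: translating a representative shifts the infimum by zero, $h\mapsto h^{-1}$ gives symmetry, and for the triangle inequality one picks near-optimal $h_1,h_2$ realizing $\dist(x,h_1\cdot y)$ and $\dist(y,h_2\cdot z)$, then uses $\dist(x,h_1h_2\cdot z)\le \dist(x,h_1\cdot y)+\dist(h_1\cdot y, h_1h_2\cdot z) = \dist(x,h_1\cdot y)+\dist(y,h_2\cdot z)$. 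To upgrade this pseudo-metric to a metric, I would invoke the Hausdorff hypothesis: a quotient by a group action is Hausdorff in the quotient topology if and only if the orbit equivalence relation is closed in $\mathcal M\times\mathcal M$, which forces each orbit $\Group\cdot y$ to be closed in $\mathcal M$. If $\dist([x],[y])=0$, then there exist $h_n\in\Group$ with $h_n\cdot y\to x$, and closedness of $\Group\cdot y$ gives $x\in\Group\cdot y$, hence $[x]=[y]$.

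Next, to identify the $\dist$-topology with the quotient topology, I would observe that the projection $\pi\colon(\mathcal M,\dist)\to(\mathcal M/\Group,\dist)$ is $1$-Lipschitz, so every $\dist$-open subset of $\mathcal M/\Group$ is quotient-open. For the converse direction, given a quotient-open $U\ni[x]$, the preimage $\pi^{-1}(U)$ is open and saturated, so contains a ball $B_\varepsilon(x)$; using that $\Group$ acts by isometries, any $[y]$ with $\dist([x],[y])<\varepsilon$ has a representative $h\cdot y\in B_\varepsilon(x)\subset \pi^{-1}(U)$, hence $[y]\in U$, so $B_\varepsilon([x])\subset U$. For completeness, given a Cauchy sequence $([x_n])$ in $(\mathcal M/\Group,\dist)$, I would pass to a subsequence with $\dist([x_n],[x_{n+1}])<2^{-n}$ and inductively choose representatives: pick any $y_1\in[x_1]$, and given $y_n\in[x_n]$, use the infimum definition of the quotient distance to find $h\in\Group$ with $\dist(y_n,h\cdot x_{n+1})<2^{-n}$, setting $y_{n+1}:=h\cdot x_{n+1}$. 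Then $(y_n)$ is Cauchy in $\mathcal M$, converges to some $y\in\mathcal M$ by completeness, and $[x_n]=[y_n]\to[y]$ in the quotient metric; since any Cauchy sequence with a convergent subsequence converges to the same limit, this proves completeness.

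The only subtle step is the topology compatibility, where the isometric action of $\Group$ is used to identify metric balls in $\mathcal M/\Group$ with saturations of metric balls in $\mathcal M$; the separation and completeness arguments are standard once the pseudo-metric structure and closedness of orbits are in hand.
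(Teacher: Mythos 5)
The paper does not prove this lemma; it is stated as a known fact and attributed to \cite{burago2001course,bruveris2015completeness}, so there is no in-paper proof to compare against. Your argument is correct and is essentially the standard one. A few remarks worth flagging: (1) your pseudo-metric verification is fine, but note the triangle inequality should be phrased via $\varepsilon$-approximate minimizers $h_1,h_2$ and then letting $\varepsilon\to 0$, since the infima need not be attained; (2) the equivalence ``quotient Hausdorff $\iff$ orbit relation closed in $\mathcal M\times\mathcal M$'' relies on the projection $\pi\colon\mathcal M\to\mathcal M/\Group$ being an \emph{open} map, which holds here because $\pi^{-1}(\pi(U))=\bigcup_{g\in\Group}g\cdot U$ is a union of homeomorphic images of $U$; you use this implicitly and it would be worth stating; and (3) in fact you only need the weaker consequence that orbits are closed, which follows since $\Group\cdot y$ is the slice of the closed set $R$ over $y$ (alternatively, $\Group\cdot y=\pi^{-1}(\{[y]\})$ and points of a Hausdorff space are closed, giving a shorter route that avoids invoking the full characterization). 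With those minor clarifications the proof is complete and correct, and it matches the approach one finds in the cited references.
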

Using this result the proof of metric completeness of $\Shape^k(M,\bR^d)$ is immediate:
\begin{proof}[Proof of \cref{thm:metric_complete_shape}]
First we recall that the space $\Shape^k(M,\bR^d)$ is Hausdorff by~\cite{inci2013diffeo}. Furthermore, the action of $\Diff^k(M)$ on $\Imm^k(M,\bR^d)$ is by isometries, since we assumed that the Riemannian metric (and thus the corresponding geodesic distance) is invariant under this action. Finally, by \cref{thm:metric_completeness_abstract} the space $(\Imm^k(M,\bR^d),G)$ is metrically complete; recall that metric completeness for a Riemannian manifold means that the manifold is metrically complete viewed as a metric space with the induced geodesic distance function. Thus the desired result follows by \cref{lem:abstract_quotientspace}.
\end{proof}
Next we prove the existence of minimizing geodesics and optimal reparametrizations. Here we will denote an equivalence class of surfaces via $[f]:=f\circ\Diff^k(M)$ for $f\in \Imm^k(M,\bR^d)$.  
\begin{theorem}[Minimizing Geodesics on the Shape Space $\Shape^k(M,\bR^d)$]\label{thm:metric_convex_shape}
Let $k\geq 3$ and let $G$ be a smooth Riemannian metric on $\Imm^k(M,\bR^d)$ that satisfies~\cref{assumption_completeness,assumption_convexity}. Assume in addition that $G$ is invariant under the action of $\Diff^k(M)$. Then we have:
\begin{enumerate}[label=(\alph*)]
    \item {\bf Existence of Optimal Reparametrizations:} For any $[f_1], [f_2] \in \Shape^k(M, \mathbb{R}^d)$ in the same connected component
there exists an optimal reparametrization $\bar \varphi\in \Diff^k(M)$
attaining the infimum in the definition of the quotient distance,
\begin{equation}
\dist([f_1], [f_2])
  = \inf_{\varphi \in \Diff^k(M)} 
    \dist_G(f_1, f_2 \circ \varphi)=
    \dist_G(f_1, f_2 \circ \bar \varphi).
\end{equation}
\item {\bf Shape Space is a Geodesic Length Space:} For any $[f_1], [f_2] \in \Shape^k(M, \mathbb{R}^d)$ in the same connected component there exists a connecting minimizing geodesic in $\Shape^k(M, \mathbb{R}^d)$.
\end{enumerate}
\end{theorem}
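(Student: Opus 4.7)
The plan is to establish part~(a) by a direct compactness argument and then to deduce part~(b) from~(a) combined with \cref{thm:existence_abstract}.

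For part~(a), I would fix a minimizing sequence $\varphi_n \in \Diff^k(M)$ with $\dist_G(f_1, f_2 \circ \varphi_n) \to \dist([f_1],[f_2])$, so that all $f_2\circ\varphi_n$ lie in a fixed $G$-metric ball around $f_1$. Condition~\ref{assumption1} from \cref{thm:abstract_completeness}, which was verified in the proof of \cref{thm:metric_completeness_abstract}, yields a uniform $H^k$-bound on $f_2\circ\varphi_n$. I would then invoke \cref{thm:existence_abstract} to produce, for each $n$, a minimizing geodesic $\gamma_n\colon[0,1]\to\Imm^k(M,\bR^d)$ from $f_1$ to $f_2\circ\varphi_n$, and repeat the weak-compactness argument from the proof of \cref{thm:abstract_completeness} to extract a subsequence converging weakly in $H^1((0,1),H^k(M,\bR^d))$ to some $\gamma_\infty$. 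In particular, $f_2\circ\varphi_n = \gamma_n(1) \rightharpoonup \gamma_\infty(1) =: f^\ast$ weakly in $H^k$, hence strongly in $C^1(M,\bR^d)$ by the compact Sobolev embedding $H^k\hookrightarrow C^1$ valid for $k\geq3$ and $\dim M = 2$.

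The key step is to exhibit $\bar\varphi \in \Diff^k(M)$ with $f^\ast = f_2\circ\bar\varphi$. Since $f_2$ is a fixed immersion, $Tf_2$ admits a uniform pointwise left inverse, and the identity $T(f_2\circ\varphi_n)=(Tf_2\circ\varphi_n)\cdot T\varphi_n$ combined with the $C^1$-bound above yields a uniform pointwise upper bound on $|T\varphi_n|$. A uniform lower bound on $\det T\varphi_n$ follows by comparing the volume density of $f_2\circ\varphi_n$, which is uniformly bounded above and below on the metric ball by \cref{rem:volumedens}, with the (fixed) volume density of $f_2$. Arzel\`a--Ascoli then produces a $C^0$-limit $\bar\varphi$ of a subsequence, which is automatically a $C^1$-diffeomorphism. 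I would then bootstrap to $H^k$-convergence by using composition and inversion estimates in Sobolev spaces on manifolds in the spirit of \cite{inci2013diffeo}, together with the $H^k$-bound on $f_2\circ\varphi_n$ and the fact that $f_2$ is an $H^k$-immersion. This yields $\bar\varphi\in\Diff^k(M)$ with $f_2\circ\bar\varphi = f^\ast$, and by \cref{assumption_convexity} the limit $\gamma_\infty$ is a path from $f_1$ to $f_2\circ\bar\varphi$ with $E(\gamma_\infty) \leq \liminf_n E(\gamma_n) = \dist([f_1],[f_2])^2$, so that $\dist_G(f_1,f_2\circ\bar\varphi) = \dist([f_1],[f_2])$ and $\bar\varphi$ is the sought optimal reparametrization.

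For part~(b), once $\bar\varphi$ is obtained, \cref{thm:existence_abstract} provides a minimizing geodesic $\gamma\colon[0,1]\to\Imm^k(M,\bR^d)$ connecting $f_1$ to $f_2\circ\bar\varphi$ whose $G$-length equals $\dist([f_1],[f_2])$. Projecting to shape space, the path $t\mapsto[\gamma(t)]$ connects $[f_1]$ and $[f_2]$, and the pointwise inequality $\dist([\gamma(s)],[\gamma(t)])\leq\dist_G(\gamma(s),\gamma(t))$ implies via summation over partitions that its metric length in $\Shape^k(M,\bR^d)$ is at most $\dist([f_1],[f_2])$. Since the length of any continuous path always dominates the distance between its endpoints, $[\gamma]$ is a minimizing geodesic in the sense of metric geometry. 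I expect the main obstacle to be the bootstrapping step in part~(a): producing a $C^0$- or $C^1$-limit of $\varphi_n$ is routine, but upgrading this to $H^k$-convergence and verifying that $\bar\varphi$ inherits full $\Diff^k$-regularity requires careful Sobolev composition estimates exploiting the immersion property of $f_2$, effectively amounting to a properness statement for the $\Diff^k(M)$-action on $\Imm^k(M,\bR^d)$.
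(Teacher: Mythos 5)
Your overall route is correct and lands where the paper does (compactness of near-minimizing paths in $H^1((0,1),H^k)$, identify the end-point limit as $f_2$ composed with a reparametrization, bootstrap that reparametrization into $\Diff^k(M)$, then deduce part~(b) from part~(a) and \cref{thm:existence_abstract}). The one genuine difference is how the intermediate reparametrization is obtained, and there your argument has a gap. You extract a $C^0$-limit $\bar\varphi$ of $\varphi_n$ by Arzel\`a--Ascoli from a uniform $L^\infty$ bound on $T\varphi_n$, and assert this is ``automatically a $C^1$-diffeomorphism.'' That is not justified: a uniformly-Lipschitz $C^0$-limit of diffeomorphisms is merely Lipschitz and need not be injective or even differentiable (the bound $\det T\varphi_n\geq c>0$ only passes to the limit in a weak sense, and $\det$ is not weak-$*$ continuous). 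To repair this you would have to bootstrap \emph{before} knowing $\bar\varphi\in C^1$, by writing $\bar\varphi$ locally as $f_2^{-1}\circ f^\ast$ and invoking the Sobolev inverse/composition calculus of \cite{inci2013diffeo} --- which is exactly what the paper does, but with a cleaner entry point: the paper does not extract any limit of $\varphi_n$; it uses the Hausdorffness of $\Shape^{k-\varepsilon}(M,\bR^d)$ (\cite[Prop.~6.2]{bruveris2015completeness}) to conclude directly that the $H^{k-\varepsilon}$-limit of the end points satisfies $f(1)=f_2\circ\varphi$ for some $\varphi\in\Diff^{k-\varepsilon}(M)$, and only then upgrades $\varphi$ to $\Diff^k(M)$ via $\varphi=f_2^{-1}\circ f(1)$. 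This sidesteps all of your pointwise estimates on $T\varphi_n$ and the injectivity/surjectivity issues for the Arzel\`a--Ascoli limit. Two smaller remarks: you invoke \cref{thm:existence_abstract} to replace near-minimizing paths by actual geodesics $\gamma_n$, which is unnecessary (near-minimizers suffice and keep the argument self-contained within the compactness machinery of \cref{thm:abstract_completeness}); and the $C^1$-convergence in your step two is of $f_2\circ\varphi_n$, not of $\varphi_n$, so even the uniform bound on $T\varphi_n$ requires the additional left-inverse argument you sketch, whereas the paper never needs it.
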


\begin{proof}
    First, we note that (b) follows immediately from (a) and the existence of minimizing geodesics in $\Imm^k(M,\bR^d)$ as proved in \cref{thm:existence_abstract}.
    To prove (a), fix $[f_1]$ and $[f_2]$ in the same connected component, and consider $\varphi_n\in\Diff^k(M)$ such that
    \[\dist([f_1], [f_2])
  = \lim_{n\to \infty} 
    \dist_G(f_1, f_2 \circ \varphi_n).\]
    Consider paths $f^n\in H^1((0,1), \Imm^k(M,\bR^d))$ such that $f^n(0) = f_1$ and $f^n(1) = f_2 \circ \varphi_n$ and such that
    \[
    \lim_{n\to \infty} E(f^n) = \dist([f_1], [f_2])^2.
    \]
 Thus, it is clear that $f^n(t)$, for all $n$ and $t$, lie in a large enough metric ball centered at $f_1$.
    Arguing as in \cref{thm:abstract_completeness}, we obtain that $f^n$ is bounded in $H^1((0,1),H^k(M,\bR^d))$.
    Thus $f^n$ converges weakly in this space to some $f\in H^1((0,1),H^k(M,\bR^d))$, hence also in $C^{0,\frac12}([0,1],H^k(M,\bR^d))$, and, by the Arzel\`a--Ascoli Theorem and the compact embedding of $H^k(M,\bR^d)$ into $H^{k-\varepsilon}(M,\bR^d)$ for all $\varepsilon>0$, up to extracting a subsequence we even have $f^n\to f$ strongly in $C^0([0,1],H^{k-\varepsilon}(M,\bR^d))$.
    In particular we have $f(0) = f_1$ and $f(1) = \lim_n f^n(1)=\lim_nf_2\circ\varphi_n$, where the limit is taken in $H^{k-\varepsilon}$.
    As a consequence, $[f(1)]$ is arbitrarily close to $[f_2\circ\varphi_n]=[f_2]$ in $\Shape^{k-\epsilon}(M,\bR^d)$.
    Since the space $\Shape^{k-\varepsilon}(M,\bR^d)$ is Hausdorff \cite[Proposition 6.2]{bruveris2015completeness}, we obtain that
    $f(1) = f_2 \circ \varphi$ for some $\varphi \in \Diff^{k-\epsilon}(M)$.
    The proof is done if we show that $\varphi \in \Diff^k(M)$.
    Note that $f(1), f_2 \in H^k$ (since $f(1)$ is the $H^k$-weak limit of $f^n(1)$).
    Thus, it is sufficient to show that (locally) $f_2^{-1} : f_2(M) \to M$ is an $H^k$-map, since $\varphi = f_2^{-1}\circ f(1)$ and the composition of $H^k$ maps stays in $H^k$.
    Here, we need to work locally as $f_2$ might not be an embedding.
    
    This claim follows by the same standard argument in differential geometry as is employed for the fact that, for a $C^k$-embedding $f:M\to \bR^d$, $f(M)$ is a $C^k$-manifold (with respect to the ambient atlas defined by graphs over the tangent spaces in $\bR^d$) and the inverse of $f$ is also $C^k$.
    The only additional ingredient needed is that the inverse of an invertible $H^k(\bR^d,\bR^d)$ map, for $k>\frac{d}{2} + 1$, is also in $H^k$, as shown in \cite{inci2013diffeo}.
\end{proof}

\section{Completeness properties of the Curvature-Weighted $H^k$-metric}\label{sec:curvature_weighted}
In this Section, we will construct a family of reparametrization-invariant mean-curvature-weighted $H^k$-metrics for $k\geq 3$ that satisfy both \cref{assumption_completeness,assumption_convexity}, which in turn directly leads to a proof of our main result~\cref{theorem:main}.
\begin{definition}[Mean-Curvature-Weighted $H^k$-Metric]\label{def:metric}
For  $l\geq k\geq 3$ and $f\in \Imm^l(M,\bR^d)$ and $h_1,h_2\in T_f\Imm^l(M,\bR^d)$ we consider the mean-curvature-weighted $H^k$ inner product   given by
\begin{equation}\label{eq:curvatureweightedmetric}
\begin{split}
G^k_f(h_1,h_2)
= \int_{M} h_1\cdot h_2+|{\H}|^4 &\left(g(\nabla h_1,\nabla h_2) +g(\nabla^2 h_1 ,\nabla^2 h_2)\right)+ g(\nabla^k h_1,\nabla^k h_2)\,\vol ,
\end{split}
\end{equation}
where $g, \H, \vol$ and $\nabla$ all depend on the surface $f$.
\end{definition}
First, we note that $G^k_f$ is well-defined for $l\ge k\ge 3$, that is, the terms in the integrand are integrable.
Indeed, if $l\ge 4$ then by the Sobolev embedding $H^{l-2}(M)\subset L^\infty(M)$ it follows that the mean curvature $\H$ is uniformly bounded, from which the integrability of the middle terms in $G^k_f$ immediately follows with \cref{thm:derivativeRegularity}.
In the case $l=k=3$, we have that $H^{l-2} = H^1\subset L^p$ for any $p<\infty$, hence both the mean curvature $\H$ and the terms $\nabla^2 h_i$ are in, say, $L^6$ so that the most difficult term $|\H|^4g(\nabla^2 h_1,\nabla^2 h_2)$ is integrable (note that we always have that $g$ and $\rho=\vol/\ol\vol$ are in $L^\infty$).

Next we show that the Riemannian metric $G^k$ satisfies the two main assumptions, which we needed to prove completeness and existence of minimizing geodesics:
\begin{theorem}\label{lemma:assumptions}
Let $M$ be a two-dimensional, closed manifold, let $d,k\geq 3$ and let $\Imm^k(M,\bR^d)$ be the space of immersed surfaces equipped with the curvature-weighted $H^k$-metric $G^k$ from \eqref{eq:curvatureweightedmetric}. We have:
\begin{enumerate}[label=(\alph*)]
\item\label{item:smoothness_metric}  $G^k$ is a \emph{smooth} Riemannian metric, which is invariant under the action of the diffeomorphism group $\Diff^k(M)$ as defined in~\cref{sec:diffgroup_action}.
\item \label{item:assumptions_metric}$G^k$ satisfies both \cref{assumption_completeness} and \cref{assumption_convexity}.
\end{enumerate}
\end{theorem}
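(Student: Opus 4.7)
For part \ref{item:smoothness_metric}, smoothness of $G^k$ is a routine verification: each geometric quantity entering \eqref{eq:curvatureweightedmetric} depends smoothly on $f\in\Imm^l(M,\bR^d)$ through algebraic and differential operations that are smooth in the Sobolev category. Concretely, $g_f=\langle Tf,Tf\rangle_{\bR^d}$ is smooth as a map $\Imm^l\to H^{l-1}$ (Banach algebra for $l\geq3$); $g_f^{-1}$ is smooth by the smoothness of matrix inversion away from the singular set (which we stay away from by the immersion condition); $\vol_f$, $\H_f$, and $\Gamma_f=\nabla-\ol\nabla$ are then smooth by composition. The integrand in \eqref{eq:curvatureweightedmetric} is a polynomial combination of these objects with $h_1,h_2$ and their $\ol\nabla$-derivatives (after using $\nabla=\ol\nabla+\Gamma_f$), hence $G^k_f(h_1,h_2)$ is a smooth symmetric bilinear form on $T_f\Imm^l$. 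For the $\Diff^k(M)$-invariance I would verify that $g_{f\circ\varphi}=\varphi^*g_f$, $\vol_{f\circ\varphi}=\varphi^*\vol_f$, $\H_{f\circ\varphi}=\H_f\circ\varphi$, and $\nabla_{f\circ\varphi}^j(h\circ\varphi)=\varphi^*(\nabla_f^jh)$ (the latter by uniqueness of the Levi-Civita connection and naturality of the pullback), and then conclude by change of variables.

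For the completeness assumption in part \ref{item:assumptions_metric}, the bounds $\|\nabla^kh\|_{L^2(g)}$ and $\|h\|_{L^2(g)}$ are manifestly controlled by $\|h\|_{G_f^k}$. For $\|\nabla h\|_{L^\infty(g)}$ I would apply \cref{cor:Sobembedding} with $q=2$ to the section $\nabla h$, yielding $\|\nabla h\|_{L^\infty(g)}\lesssim\Vol^{1/2}\bigl(\|\nabla^3h\|_{L^2(g)}+\||\H|^2\nabla h\|_{L^2(g)}\bigr)$; the volume factor is uniformly bounded on $G^k$-metric balls by \cref{sqrtvol}, the $\|\nabla^3h\|_{L^2(g)}$ term is dominated through \cref{lem:metricdom} by $\|h\|_{L^2(g)}+\|\nabla^kh\|_{L^2(g)}$, and $\||\H|^2\nabla h\|_{L^2(g)}^2=\int_M|\H|^4g(\nabla h,\nabla h)\vol$ is a summand of $G^k_f$. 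For $\|\nabla^2 h\|_{L^p(g)}$ with $k>3$ and $p=\infty$ the same argument applied to $\nabla^2 h$ works, using $\|\nabla^4 h\|_{L^2(g)}$ (controlled by interpolation) and the $|\H|^4g(\nabla^2h,\nabla^2h)$ summand of $G^k$. For the boundary case $k=3$ and $p=4$ I apply \cref{thm:L4toH1} to $\nabla^2 h$, which reduces matters to controlling $\|\sqrt{|\H|}\nabla^2h\|_{L^2(g)}$; this is done via the Young-type estimate $|\H|\leq\tfrac14|\H|^4+\tfrac34$ so that $\int|\H|g(\nabla^2h,\nabla^2h)\vol\leq\tfrac14\int|\H|^4g(\nabla^2h,\nabla^2h)\vol+\tfrac34\int g(\nabla^2h,\nabla^2h)\vol$, both of which are controlled (the second via \cref{lem:metricdom}).

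The weak lower semicontinuity of $E$ in \cref{assumption_convexity} will be the main technical hurdle. Given $f^n\rightharpoonup f$ in $H^1((0,1),H^k(M,\bR^d))$ with fixed end points in $\Imm^k$, the Aubin--Lions lemma yields $f^n\to f$ strongly in $C^0([0,1],H^{k-\epsilon}(M,\bR^d))$ for every $\epsilon>0$; in particular, using the Sobolev embedding on the $2$-manifold $M$, one obtains strong convergence of $f^n$ uniformly in $t$ in $C^{k-2}(M,\bR^d)$ (with $L^p$ convergence of any remaining second derivatives when $k=3$). By \cref{lem:approximation_MSSestimates} this propagates to strong convergence of $g_{f^n},g_{f^n}^{-1},\rho_{f^n},\H_{f^n},\Gamma_{f^n}$ and their relevant covariant derivatives in $L^\infty$ (for $k\geq4$) or $L^p$ for every $p<\infty$ (for $k=3$), uniformly in $t$. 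I would then write
\begin{equation}
G^k_{f(t)}(\dot f(t),\dot f(t))=\int_M F\bigl(u(t,x),\xi(t,x)\bigr)\,\ol\vol,
\end{equation}
where $u$ bundles together $f$ and the finitely many derivatives of $f$ on which the coefficients depend, and $\xi=(\dot f,\ol\nabla\dot f,\ldots,\ol\nabla^k\dot f)$; here $F(u,\xi)$ is a nonnegative quadratic form in $\xi$ whose coefficients depend continuously on $u$. Since $u^n\to u$ strongly in the topologies described above while $\xi^n\rightharpoonup\xi$ weakly in $L^2((0,1)\times M)$, the claim reduces to the classical lower semicontinuity theorem of Ioffe--De Giorgi for integral functionals with integrand convex in the gradient variable and continuous in the state variable. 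The only point requiring care is the $k=3$ case, where the coefficient $|\H_{f^n}|^4$ converges merely in $L^p$ for $p<\infty$; this is handled by an $L^p$-$L^{p'}$ Hölder splitting for each mixed quadratic term, using that $\ol\nabla^j\dot f^n$ is bounded in $L^2((0,1),H^{k-j})\hookrightarrow L^2((0,1),L^q(M))$ for every $q<\infty$ when $k-j\geq 1$, and that the leading $|\ol\nabla^k\dot f|^2$ term carries the $L^\infty$-convergent weight $g_{f^n}\rho_{f^n}$.
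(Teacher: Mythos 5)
Your treatment of part (a) and of \cref{assumption_completeness} matches the paper's: the same Sobolev embedding (\cref{cor:Sobembedding}) applied to $\nabla h$ and $\nabla^2 h$, the same interpolation (\cref{lem:metricdom}) to absorb the intermediate $\|\nabla^3 h\|_{L^2(g)}$ and $\|\nabla^4 h\|_{L^2(g)}$ terms, \cref{sqrtvol} for the volume factor, and, for $k=3$, \cref{thm:L4toH1} with the elementary bound $|\H|\lesssim 1+|\H|^4$. The paper estimates (via $\sqrt{1+|\H|^4}\geq\sqrt{|\H|}$) instead of your additive Young split, but this is cosmetic.

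For \cref{assumption_convexity}, however, your Ioffe--De Giorgi reduction has a genuine gap. You write the integrand as $F(u,\xi)$ with $\xi=(\dot f,\ol\nabla\dot f,\ldots,\ol\nabla^k\dot f)$ converging weakly and $u$ (``$f$ and the finitely many derivatives of $f$ on which the coefficients depend'') converging strongly. But expand $\nabla^k\dot f = (\ol\nabla+\Gamma)^{k-1}T\dot f$ in background derivatives: the term $(\ol\nabla^{k-2}\Gamma)\,T\dot f$ appears, and since $\Gamma$ is built from $\ol\nabla^2 f$, this coefficient depends on $\ol\nabla^k f$. From weak $H^1((0,1),H^k)$-convergence you only get $f^n\to f$ strongly up to $C^0 W^{k-1,p}$; the top derivative $\ol\nabla^k f^n$ converges \emph{weakly} in $L^2$, not in measure. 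So the state variable $u^n$ does \emph{not} converge strongly, and Ioffe--De Giorgi does not apply: you would be facing products of two merely-weakly-convergent factors (e.g.\ cross terms $\ol\nabla^{k-2}\Gamma_n\cdot T\dot f^n\cdot\ol\nabla^k\dot f^n$ and diagonal terms $|\ol\nabla^{k-2}\Gamma_n|^2|T\dot f^n|^2$), for which the $\liminf$ need not dominate the limit. The $k=3$, $|\H|^4\in L^p$ issue you flag is real but secondary.

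The paper resolves this by a different mechanism: it introduces the square-root change-of-metric operator $R^n$ with $\ol g(R^n\cdot,R^n\cdot)=g^n(\cdot,\cdot)$, rewrites the path energy as a sum of four plain $L^2L^2$-norms $\|\cdot\|_{L^2L^2}^2$, and proves that each argument --- in particular $\nabla_n^k\dot f^n$ --- converges \emph{weakly in $L^2L^2$ as a whole}. The single problematic summand $(\ol\nabla^{k-2}\Gamma_n)\,T\dot f^n$ is handled by duality/integration by parts: one tests against a smooth $\phi$, moves one derivative off $\Gamma_n$ onto $\phi T\dot f^n$, and then uses the weak convergence of $T\dot f^n$ in $L^2H^{k-1}$ against the \emph{strong} convergence of $\ol\nabla^{k-3}\Gamma_n$ in $C^0L^p$. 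Lower semicontinuity then reduces to that of the $L^2$-norm. If you want to rescue your formulation, you would have to take $\xi$ to include $\nabla^k\dot f$ itself (so that the coefficients only involve $g,\rho,\H$, which converge strongly); but then establishing ``$\xi^n\rightharpoonup\xi$'' is exactly the paper's integration-by-parts step, which your proposal does not supply.
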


\begin{proof}
{\bf \cref{item:smoothness_metric}:}
The invariance of $G^k_f$ under the action of the diffeomorphism group follows directly by an application of the transformation formula for integrals
as well as the fact $(\nabla_f h)\circ\varphi=\nabla_{f\circ\varphi}(h\circ\varphi)$ for any $\varphi\in\Diff(M)$ (where the index to $\nabla$ indicates what immersion the covariant derivative is associated with).
Thus it only remains to show that $G^k$ depends smoothly on the foot-point $f$, which is non-trivial as all the terms in the definition of $G^k$ --- e.g., the covariant derivative $\nabla$ and the mean curvature $\H$ --- depend highly nonlinearly on the immersion $f$. 
For Sobolev type metrics, even of fractional order, this has been studied in detail in previous work~\cite{muller2017applying,bauer2020fractional}, and the metric~\eqref{eq:curvatureweightedmetric} satisfies the conditions derived there. 
This is, however, somewhat difficult to see, and for the reader's convenience we give a direct proof in the following. To this end we first rewrite the inner product as
\[
\begin{split}
G^k_f(h_1,h_2) 
= \int_{M} \bigg(&g(h_1,h_2)+|\H|^4g(\overline\nabla h_1,\overline\nabla h_2)\\&+|\H|^4g(\nabla \overline\nabla h_1,\nabla\overline\nabla h_2)+ g(\nabla^{k-1}\overline\nabla h_1,\nabla^{k-1}\overline\nabla h_2)\bigg)\rho \overline{\vol} ,
\end{split}
\]
where we used that $\nabla$ acting on functions does not depend on the immersion $f$ and thus we have $\nabla h=\overline{\nabla}h$ for any $h\in T_f\Imm^k(M,\bR^d)$. Next we study the individual components of this inner product:  The mapping $f\mapsto \rho$ is a smooth mapping from $\Imm^k(M,\bR^d)$ to
$H^{k-1}(M,\bR^d)$ by~\cite[Lemma~3.1]{bauer2020fractional}; similarly the mapping $f\mapsto\H$ is smooth as a mapping from $\Imm^k(M,\bR^d)$ to
$H^{k-2}(M,\bR^d)$.
Finally, by~\cite[Lemma 3.5]{bauer2022smooth} the mapping
$f\mapsto \nabla$ is smooth as a mapping from $\Imm^k(M,\bR^d)$ to $L(H^s(M,E),H^{s-1}(M,T^*M\otimes E))$ for any vector bundle $E$ over $M$ and $s\in [0,k-1]$. Note that we cannot choose $s=k$, but this is not a problem since we rewrote the inner product before to exchange the first $\nabla$ with a $\overline{\nabla}$. From here the result follows by the module properties of Sobolev spaces, cf.~\cite{behzadan2021multiplication}.

\noindent{\bf \cref{item:assumptions_metric}, \cref{assumption_completeness}:}
By definition the metric $G^k$ includes the first and last term on the right-hand side of condition~\eqref{eq:metric_completeness_cond} and thus trivially bounds these two terms.
The third term, $\|\nabla h\|_{L^{\infty}(g)}$, follows by applying the Sobolev embedding theorem, as stated in \cref{cor:Sobembedding}, to $\nabla h$,
\begin{align}
\| \nabla h\|_{L^{\infty}} \leq C \sqrt{\operatorname{Vol}} \left( \|\nabla^3 h\|_{L^2(g)} + \||\H|^2 \nabla h\|_{L^2(g)}
 \right)\lesssim G^k_f(h,h),
\end{align}
where we used in the last step that the total volume is controlled on metric balls by~\cref{sqrtvol}.

For the remaining term, $\|\nabla^2 h\|_{L^p(g)}$, we distinguish again between $k=3$ and $k>3$. For $k=3$ \cref{thm:L4toH1} implies
\begin{align}
\|\nabla^2 h\|_{L^4(g)}&\leq C\left(\|\nabla^2 h\|_{L^2(g)}+\|\sqrt{|\H|}\nabla^2 h\|_{L^2(g)}+\|\nabla^3 h\|_{L^2(g)} \right)\\
&\leq C\left(\|\nabla^2 h\|_{L^2(g)}+\|\sqrt{1+|\H|^4}\nabla^2 h\|_{L^2(g)}+\|\nabla^3 h\|_{L^2(g)} \right)\\&\lesssim G^3_f(h,h),
\end{align}
where we used that $\sqrt{(1+|\H|^4)}\geq \sqrt{|\H|}$ as well as \cref{lem:metricdom}.
For $k>3$ we apply the same argument as we used above for $\| \nabla h\|_{L^{\infty}}$ to bound $\| \nabla^2 h\|_{L^{\infty}}$.

\noindent{\bf \cref{item:assumptions_metric}, \cref{assumption_convexity}:}
To show the weak sequential lower semicontinuity of $f\mapsto E(f)$, let us abbreviate, for any $l,\alpha,p$,
\begin{gather}
L^p=L^p(M,N),\
H^l=H^l(M,N),\
W^{l,p}=W^{l,p}(M,N),\\
L^2L^p=L^2((0,1),L^p),\
L^2H^l=L^2((0,1),H^l),\
H^1H^l=H^1((0,1),H^l),\\
C^0L^p=C^0([0,1],L^p),\
C^0H^l=C^0([0,1],H^l),\
C^0W^{l,p}=C^0([0,1],W^{l,p}),\\
C^0C^{0}=C^0([0,1],C^{0}(M,N)),\
C^{0,\alpha}H^l=C^{0,\alpha}([0,1],H^l),
\end{gather}
where the codomain $N$ should always be clear from the context (most often $N=\bR^d$)
and the norms of function spaces on $M$ are always with respect to the background metric $\ol g$ unless otherwise specified.

Next we consider a sequence $f^n\rightharpoonup f$ weakly in $H^1H^k$.
By $\|h_t-h_s\|_{H^k}=\|\int_s^t\dot h_r\,\mathrm d r\|_{H^k}\leq\int_s^t\|\dot h_r\|_{H^k}\mathrm d r\leq\sqrt{|t-s|}\|\dot h\|_{L^2H^k}$ (using H\"older's inequality in the last step),
we have the continuous embedding $H^1H^k\hookrightarrow C^{0,1/2}H^k$.
Hence also $f^n\rightharpoonup f$ weakly in $C^{0,1/2}H^k$.
Furthermore, since by Sobolev embedding $H^k$ embeds compactly into $W^{k-1,p}$ for any $p\geq1$,
we may apply the Arzel\`a--Ascoli Theorem to obtain another subsequence (still indexed by $n$) with $f^n\to f$ strongly in $C^0W^{k-1,p}$.

Let us denote the (time-dependent) metric, volume density, covariant derivative, and mean curvature on surface $f^n$ by $g^n,\rho^n,\nabla_n,\H_n$ (analogously for $f$).

Since $Tf^n\to T f$ in $C^0W^{k-2,p}$ and since $W^{k-2,p}$ forms a Banach algebra for $p$ large enough \cite[V Theorem 5.23]{adams2003sobolev},
we have $g^n=\langle Tf^n\cdot,Tf^n\cdot\rangle_{\bR^d}\to\langle Tf\cdot,Tf\cdot\rangle_{\bR^d}=g$ in $C^0W^{k-2,p}$.
As in the proof of \cref{lem:approximation_MSSestimates} \ref{enm:metricConvergence} this implies convergence of the extended metric (on arbitrary tensor bundles)
\begin{equation}
g^n\to g
\qquad\text{in }C^0W^{k-2,p}\hookrightarrow C^0C^0
\end{equation}
(and the same for its inverse, exploiting that inverting an operator is smooth for operators with eigenvalues bounded away from zero).
Consequently, also
\begin{equation}
\rho^n\to\rho
\qquad\text{in }C^0C^0.
\end{equation}

We also require convergence of the change of metric operator that turns $\ol g$ into $g^n$.
In detail, note that the operator $\ol g^{-1}g^n:TM\to TM$ is positive definite $\ol g$-selfadjoint,
thus has a unique positive definite $\ol g$-selfadjoint square root $R^n:TM\to TM$ that depends smoothly on $g^n$.
It is straightforward to check $\ol g(R^nX,R^nY)=g^n(X,Y)$ for any tangent vectors $X,Y\in TM$.
Analogously one can define $R^n$ as an endomorphism on $(TM)^{\otimes i}\otimes(T^*M)^{\otimes j}\otimes(\bR^d)^{\otimes m}$ with same properties.
The above convergence of $g^n$ implies
\begin{equation}
R^n\to R
\qquad\text{in }C^0W^{k-2,p}\hookrightarrow C^0C^0.
\end{equation}

Next we consider the convergence of $\Gamma_n=\nabla_n-\ol\nabla$.
Working in a chart, $\Gamma_n$ can be expressed as the difference of the Christoffel symbols
(of the second kind,
see \cref{sec:inducedGeometry}) associated with $g^n$ and $\ol g$,
where with a slight abuse of notation we also use $g^n,\ol g$ for the respective coordinate representations of the metrics.
The Christoffel symbols of the second kind are sums of products of $(g^n)^{-1}$, which converge to $g^{-1}$ in $C^0W^{k-2,p}$, with derivatives of $g^n$, wich converge in $C^0W^{k-3,p}$.
If $k>3$, then $W^{k-3,p}$ is a Banach algebra for $p$ large enough \cite[V Theorem 5.23]{adams2003sobolev} so that these Christoffel symbols of $g^n$ converge to those of $g$ in $C^0W^{k-3,p}$.
If $k=3$, then $(g^n)^{-1}\to g^{-1}$ in $C^0W^{1,p}\hookrightarrow C^0L^\infty$, while the derivatives of $g^n$ converge in $C^0L^p$.
In either case we arrive at
\begin{equation}
\Gamma_n,\Gamma\in C^0H^{k-2}
\qquad\text{with}\qquad
\Gamma_n\to\Gamma
\quad\text{in }C^0W^{k-3,p}
\end{equation}
if $\Gamma$ acts on tangent vectors and analogously if it acts on arbitrary tensors.

Now consider the mean curvature.
We have $\nabla_nTf^n=\ol\nabla Tf^n+\Gamma_nTf^n$, of which the first summand converges to $\ol\nabla Tf$ in $C^0W^{k-3,p}\hookrightarrow C^0L^p$
and the second to $\Gamma Tf$ in $C^0L^{p/2}$ (as the product of two functions converging in $C^0L^p$).
Consequently, we have convergence of the second fundamental forms $S_n=\nabla_nTf^n\to\nabla Tf=S$ in $C^0L^{p/2}$.
Together with the uniform convergence $(g^n)^{-1}\to g^{-1}$ this implies
\begin{equation}
\H_n=\Tr((g^n)^{-1}S_n)\to\Tr(g^{-1}S)=\H
\qquad\text{in }C^0L^{p/2}.
\end{equation}

Finally, we have
\begin{equation}
\nabla_n\dot f^n
=\ol\nabla\dot f^n
\rightharpoonup\ol\nabla\dot f
=\nabla\dot f
\qquad\text{weakly in }L^2H^{k-1}\hookrightarrow L^2L^p,
\end{equation}
as well as (using $\ol\nabla^2\dot f^n\rightharpoonup\ol\nabla^2\dot f$ weakly in $L^2H^{k-2}\hookrightarrow L^2L^p$ and $\ol\nabla\dot f^n\rightharpoonup\ol\nabla\dot f$ weakly in $L^2H^{k-1}\hookrightarrow L^2L^p$,
while $\Gamma_n\to\Gamma$ strongly in $C^0L^p$)
\begin{equation}
\nabla_n^2\dot f^n=\ol\nabla^2\dot f^n+\Gamma_n\ol\nabla \dot f^n
\rightharpoonup\ol\nabla^2\dot f+\Gamma\ol\nabla\dot f=\nabla^2\dot f
\qquad\text{weakly in }L^2L^{p/2}.
\end{equation}
Likewise,
\begin{equation}
\nabla_n^k\dot f^n
\rightharpoonup\nabla^k\dot f
\qquad\text{weakly in }L^2L^2,
\end{equation}
which can be obtained as follows:
Expanding $\nabla_n^k\dot f^n=(\ol\nabla+\Gamma_n)^k\dot f^n=(\ol\nabla+\Gamma_n)^{k-1}T\dot f^n$ and applying the product rule for derivatives of products and contractions of tensors,
one obtains a large sum of products between a (higher order) derivative of $\dot f^n$ and (higher or zero order) derivatives of multiple $\Gamma_n$.
Exactly one summand, $\ol\nabla^k\dot f^n$, contains no $\Gamma_n$, and it converges weakly to $\ol\nabla^k\dot f$ in $L^2L^2$.
Again exactly one summand, $[\ol\nabla^{k-2}\Gamma_n]T\dot f^n$, contains only a single derivative of $\dot f^n$, and it converges weakly to $[\ol\nabla^{k-2}\Gamma]T\dot f$ in $L^2L^2$ as can be seen as follows:
Due to $\ol\nabla^{k-2}\Gamma_n\in C^0L^2$ and $T\dot f^n\in L^2C^0$ one can readily check that the product lies in $L^2L^2$.
Now for any smooth $\phi$ we have
\begin{align}
&\int_0^1\int_M\phi(\ol\nabla^{k-2}\Gamma_n)T\dot f^n\ol\vol\,\mathrm dt
=\int_0^1\int_M\ol\nabla^*(\phi T\dot f^n)\ \ol\nabla^{k-3}\Gamma_n\ol\vol\,\mathrm dt\\
&\to\int_0^1\int_M\ol\nabla^*(\phi T\dot f)\ \ol\nabla^{k-3}\Gamma\ol\vol\,\mathrm dt
=\int_0^1\int_M\phi(\ol\nabla^{k-2}\Gamma)T\dot f\ol\vol\,\mathrm dt,
\end{align}
since $T\dot f^n\rightharpoonup T\dot f$ weakly in $L^2H^{k-1}$, $\phi$ is smooth, and $\ol\nabla^{k-3}\Gamma_n\to\ol\nabla^{k-3}\Gamma$ strongly in $C^0L^p$.
Finally, all other summands contain in total at most $k-3$ derivatives of (multiple) $\Gamma_n$ (which all converge in $C^0L^p$) and at most $k-1$ derivatives of $\dot f^n$ (which converge weakly in $L^2L^p$),
so all other summands converge weakly in $L^2L^2$ for $p$ large enough.

In summary, we obtain
\begin{align}
\liminf_{n\to\infty}E(f^n)
&=\liminf_{n\to\infty}\|\sqrt{\rho^n}\dot f^n\|_{L^2L^2}^2
+\|\sqrt{\rho^n}|\H_n|^2R^n\nabla_n\dot f^n\|_{L^2L^2}^2\\
&\quad+\|\sqrt{\rho^n}|\H_n|^2R^n\nabla_n^2\dot f^n\|_{L^2L^2}^2
+\|\sqrt{\rho^n}R^n\nabla_n^k\dot f^n\|_{L^2L^2}^2\\
&\geq\|\sqrt\rho \dot f\|_{L^2L^2}^2
+\|\sqrt\rho|\H|^2R\nabla \dot f\|_{L^2L^2}^2\\
&\quad+\|\sqrt\rho|\H|^2R\nabla^2\dot f\|_{L^2L^2}^2
+\|\sqrt\rho R\nabla^k\dot f\|_{L^2L^2}^2
=E(f),
\end{align}
since the argument of each norm converges weakly in $L^2L^2$ and the norm is weakly lower semicontinuous.
\end{proof}
\begin{remark}[Other Choices for $G^k$]\label{rem:otherchoices}
From the above proof we note that the metric $G^k$ is not the optimal choice in terms of the chosen powers of the mean curvature weight, e.g.\ for the $H^3$-case we could choose
\begin{align}
\tilde G^3_f(h_1,h_2) 
= \int_{M} g(h_1,h_2)&+|\H|^4g(\nabla h_1,\nabla h_2)+f(\H)g(\nabla^2 h_1,\nabla^2 h_2)+ g(\nabla^3 h_1,\nabla^3 h_2)\vol,
\end{align}
where $f(x):\bR^{d}\to[0,\infty)$ is any smooth function that grows at least like $|x|$,
and would still satisfy both \cref{assumption_completeness} and \cref{assumption_convexity}.
\end{remark}

\begin{remark}[Existence Conditions]
The coercivity condition from \cref{assumption_completeness} will ensure sequential compactness of the set of finite energy paths between $f_0$ and $f_1$.
This and the lower semicontinuity condition, \cref{assumption_convexity}, represent the typical ingredients of a variational existence proof.
Examples where the lower semicontinuity condition is violated and thus existence of minimizing geodesics is not expected to hold include metrics $G_f$ that depend in a nonconvex way on $\nabla^kf$,
such as $$G_f(h,h)=G^k_f(h,h)\left(1+\int_M1-\exp[-(1-|\nabla^kf|^2)^2]\,\vol\right),$$ which encourages $|\nabla^kf|=1$
(for concrete counterexamples to existence of minimizing geodesics it may actually be easier to replace, in the last integral, $f$ by its first component and $\nabla$ by $\nabla_X$ for a fixed given vector field $X$ on $M$).
\end{remark}

Using the above our main result immediately follows:
\begin{proof}[Proof of~\cref{theorem:main}]
We have shown in~\cref{lemma:assumptions} that $G^k$ is a smooth,  reparametrization-invariant metric on $\Imm^k(M,\bR^d)$ that satisfies \cref{assumption_completeness,assumption_convexity}. Thus all statements of the theorem follow by invoking~\cref{thm:metric_completeness_abstract,thm:existence_abstract,thm:metric_complete_shape,thm:metric_convex_shape}.
\end{proof}

\appendix
\section{Table of Notation}\label{app:table_notation}
Below we summarize some of the main notations used in the paper.
These notations are explained in \cref{sec:spaces}.
  \begin{longtable}{@{}p{.3\textwidth}p{\dimexpr.7\textwidth-2\tabcolsep\relax}@{}}
  \hline
  \mbox{General Notation} \\
  \hline
  $M$ & a two-dimensional, closed manifold; the parameter space\\
  $\Imm(M,\bR^d)$ & the space of smooth immersions in $\bR^d$ with $d\geq 3$ \\
  $\Imm^l(M,\bR^d)$ & the space of $H^l$ immersions in $\bR^d$ with $l>2$ \\
  $\Shape^l(M,\bR^d)$& the space of unparametrized $H^l$ immersions in $\bR^d$\\
    $f\in \Imm^l(M,\bR^d)$ & an immersion\\
    $[f]\in \Shape^l(M,\bR^d)$& an unparametrized immersed surface\\
    $h\in T_f \Imm^l(M,\bR^d)$ & a tangent vector to the space of immersions 
    \\
    $\langle\cdot,\cdot\rangle_{\bR^d}$ & the Euclidean inner product on $\bR^d$\\
    $\Tr$& trace of an endomorphism
    \\\hline
  \mbox{Surface Geometry} \\\hline
  $\ol{g}$& a fixed background Riemannian metric on $M$ \\
$\overline{\vol}$& the volume density of $\ol{g}$ \\
 $\overline{\nabla}$& the Levi-Civita derivative of $\ol{g}$ \\
 $g=f^*\langle\cdot,\cdot\rangle$ & the pullback metric of $f$ \\
 $\vol$& the volume density of $f$  \\
 $\nabla$& the Levi-Civita derivative of $f$  \\ 
 $\Vol$& total volume of $f$  \\
   $S=\nabla Tf$ & the vector-valued second fundamental form of $f$ \\
   $\H=\Tr(g^{-1}S)$ & the vector-valued mean curvature of $f$ \\
   $\Gamma=\nabla-\overline{\nabla}$& deviation from the background covariant derivative\\
  $\ChristVec,\ChristCovec$&particular instances of $\Gamma$ acting on tangent and cotangent vectors, respectively
\\\hline
  \mbox{Riemannian Metrics on $\Imm^l(M,\bR^d)$} \\\hline
  $G^k$ & mean-curvature-weighted, reparametrization-invariant $H^k$-metric as defined in \cref{def:metric}\\
$\ol G^k$ & background $H^k$-metric as defined in \cref{def:backgroundmetric}\\
$\dist_G$ & distance metric induced by Riemannian metric $G$\\
$\dist$ & distance in a metric space
\\\hline
  \mbox{Induced Norms on $T_f\Imm^l(M,\bR^d)$} \\\hline
 $\|h\|_{G_f}$ & norm of $h\in T_f\Imm^l(M,\bR^d)$ w.r.t.\ a metric $G$\\ 
   $\|h\|_{L^2(g)}$ & $L^2$-norm of $h\in T_f\Imm^l(M,\bR^d)$ w.r.t.\ $\vol$\\
   $\|h\|_{L^2(\ol{g})}$ & $L^2$-norm of $h\in T_f\Imm^l(M,\bR^d)$ w.r.t.\ $\overline{\vol}$\\
   \hline
   \end{longtable}

\bibliographystyle{abbrv}  

\end{document}